\newcommand{\SLZ}{\mathrm{SL}_{2}(\ZZ)}
\newcommand{\MPZ}{\mathrm{Mp}_{2}(\ZZ)}
\renewcommand{\H}{\mathbb{H}}
\newcommand{\ZZ}{\mathbb{Z}}
\newcommand{\QQ}{\mathbb{Q}}
\newcommand{\Arg}{\textup{Arg}}
\newcommand{\ord}{ \textup{ord}}
\newcommand{\fqmQ}{\ensuremath{\mathcal{Q}}}
\newcommand{\eqd}[1]{\varepsilon_{\fqmQ,#1}}
\newcommand{\eq}{\varepsilon_{\fqmQ} }
\newcommand{\si}{{\sigma}_{w}(\fqmQ)}
\newcommand{\Scheps}[1]{\lambda_{#1}}
\renewcommand{\r}{\ensuremath{{\rho}_{\fqmQ}}}
\newcommand{\rt}{\ensuremath{{ \widetilde{\rho} }_{\fqmQ}}}
\newcommand{\ch}{\ensuremath{{\chi}_{\fqmQ}}}
\newcommand{\sign}{\textup{sign}}
\newcommand{\oddity}{\textup{oddity}}
\newcommand{\pexcess}{p\textup{-excess}}
\newcommand{\lc}{\ensuremath{l_c}}
\newcommand{\gammafak}{g} 
\newcommand{\mat}[1]{{\bf #1}}
\numberwithin{equation}{section}
\theoremstyle{plain}
\newtheorem{theorem}{Theorem}[section]
\newtheorem{corollary}[theorem]{Corollary}
\newtheorem{theorem*}{Theorem}
\newtheorem{lemma}[theorem]{Lemma}
\theoremstyle{definition}
\newtheorem{example}[theorem]{Example}
\newtheorem{assumption}[theorem]{Assumption}
\newtheorem{definition}[theorem]{Definition}
\theoremstyle{remark}
\newtheorem{remark}[theorem]{Remark}
\theoremstyle{plain}
\newtheorem*{acknowledgements}{Acknowledgements}
\begin{document}

\title{Weil Representations associated to finite quadratic modules}
\keywords{Weil representation \and Metaplectic group \and Finite quadratic module}
\subjclass[2010]{ 11F27 \and 20C25}

\author{Fredrik Str{\"o}mberg}

\address{Fredrik Str{\"o}mberg \newline
\indent Fachbereich Mathematik, Technische Universit{\"a}t Darmstadt, Schlossgartenstrasse 7 \newline
\indent  64289 Darmstadt, Germany}
\email{stroemberg@mathematik.tu-darmstadt.de}           


\begin{abstract}
To a finite quadratic module, that is, a finite abelian group $D$
together with a non-singular quadratic form $Q:D\rightarrow\QQ/\ZZ$,
it is possible to associate a representation of either the modular
group, $\SLZ,$ or its metaplectic cover, $\MPZ$, on $\mathbb{C}\left[D\right]$,
the group algebra of $D$. This representation is usually called the
Weil representation associated to the finite quadratic module. The
main result of this paper is a general explicit formula for the matrix
coefficients of this representation. The formula, which involves the
$p$-adic invariants of the quadratic module, is given in a way which
is easy to implement on a computer. The result presented completes
an earlier result by Scheithauer for the Weil representation associated to a discriminant form of even
signature.
\end{abstract}

\maketitle

\section{Introduction}

\subsection{Historical background}

The theory of theta functions originated with letters from Euler to Goldbach
in the years 1748--1750 \cite[Letters 115-133]{MR0225627} where the
now classical theta function, which for  $\tau\in\H=\left\{ x+iy\,|\, y>0\right\}$ is defined by 
\[
\vartheta\left(\tau\right)=\sum_{n=-\infty}^{\infty}e^{\pi i\tau n^{2}},
\]
was introduced by Euler as a means to study the decomposition of integers into sums
of squares. 
A comprehensive history of this and other theta functions
is given by Krazer \cite{0212.42901} (see also \cite{MR1711085}).
There is an intimate connection between the subsequent development of the theory of theta functions
and the type of Weil representation we consider in this paper. 
 For our purposes, the
next important step was taken by Poisson \cite[p.\ 420]{poisson_summation}
(cf.~\cite[p.\ 260]{MR0260557}) who proved the transformation law
\[
\vartheta\left(-\tau^{-1}\right)=\sqrt{-i\tau}\vartheta\left(\tau\right)
\]
using Fourier-theoretic methods, essentially what we today refer to
as the Poisson summation formula. If we set $\theta(\tau)=\vartheta(2\tau)$
it is known (cf.~e.g.~\cite[p.\ 46]{Iwaniec:topics}) that 
\[
\theta(\mat{A}\tau)=v_{\theta}(\mat{A})\sqrt{c\tau+d}\,\theta(\tau),\quad \textrm{for } \mat{A}\in\Gamma_0(4), 
\]
where $v_{\theta}(\mat{A})=\left(\frac{c}{d}\right)$ if $d\equiv1$ (mod $4$)
and $-i\left(\frac{c}{d}\right)$ if $d\equiv3$ (mod $4$) 
(cf.~Remark \ref{rem:chi_as_v_theta}). Jacobi \cite{MR0260557} considered theta
functions in connection with elliptic integrals and was therefore
naturally led to define theta functions in two variables, e.g. $\vartheta\left(\tau\right)=\vartheta_{3}\left(\tau,0\right)$
where $\vartheta_{3}(\tau,z)=\sum_{n\in\ZZ} e^{\pi i\tau n^{2}+2\pi izn}$
(cf.~e.g.~\cite[p.~501]{MR0260557}) is an example of what is now called
a \emph{Jacobi form}. An introduction to the general theory of classical Jacobi forms
is given by e.g. Eichler and Zagier \cite{eichler-zagier}; note that the definition of Jacobi
form here does not include $\vartheta$. 
The function $\vartheta$ was introduced as a Jacobi form of index $\frac{1}{2}$ 
by Skoruppa \cite{MR806354} and Gritsenko \cite{MR982481}. A representation-theoretical 
approach to Jacobi forms is given by Berndt and Schmidt \cite{MR1634977}.
Nowadays, a theta function is usually considered in association with a lattice. 
From this point of view it has a natural interpretation as a vector-valued
modular form for the Weil representation corresponding to this lattice. An introduction to the modern theory of 
theta functions is given by e.g.~Koecher and Krieg~\cite{MR1711085} and Ebeling~\cite{MR1280458}. 
The key steps in the historical development of this theory were taken by e.g.~Hermite \cite{hermite:1858},
Hecke \cite{MR1512360}, Schoeneberg \cite{MR1513241}, Kloosterman
\cite{kloosterman}, Pfetzer \cite{MR0059945}, Weil \cite{MR0165033},
Wolfart and Nobs \cite{MR0429742} and Wolfart \cite{MR0429743}. 

In this setting the function $\theta$ can be viewed as one component
of a vector-valued theta function associated to the lattice $\ZZ$
together with the quadratic form $x\mapsto x^{2}$. 

Through the connection with theta series it is possible to obtain
relationships between vector-valued modular forms for the Weil representation
and other types of automorphic forms. The most direct
relationship is the identification between Jacobi forms and vector-valued
modular forms for the Weil representations associated to the index of
the Jacobi form. Cf. e.g.~\cite{eichler-zagier,MR806354,MR2512363}.
For example, the space of classical holomorphic Jacobi forms, $J_{k,m}^{+}$, of weight $k$ and positive integer index $m$,
corresponds to vector-valued modular forms for the
dual Weil representation associated to the lattice $\ZZ$
together with the quadratic form $x\mapsto mx^{2}$ 
(see Example \ref{exa:Nx^2}).

Relationships to modular forms on orthogonal groups and automorphic
products are given by e.g.~Borcherds \cite{Bo,MR1773561}, Bruinier \cite{MR1903920,Br2}
and Scheithauer \cite{scheithauer:weil_rep,MR2221135}. In a representation-theoretical setting, 
Gelbart \cite{MR0424695} used Weil representations  
to describe and decompose automorphic representations of metaplectic
(adele) groups, as well as describe correspondences between half-integral
and integral weight automorphic forms. Additional information in this context is given by Niwa \cite{niwa:half_integral}
and Shintani \cite{shintani:75:onconstruction}.

\subsection{Statement of the main result}

Let $D$ be a finite abelian group and $Q:D\rightarrow\QQ/\ZZ$
a quadratic form with non\-de\-ge\-ne\-rate associated bi-linear form $B\left(x,y\right):=Q\left(x+y\right)-Q\left(x\right)-Q\left(y\right)$.
That is, $Q\left(ax\right)=a^{2}Q\left(x\right)$ for all $a\in \ZZ$  
and $x\in D$ and the map $x\mapsto B\left(x,\cdot\right)$ is a linear
isomorphism between $D$ and $\text{Hom}\left(D,\QQ/\ZZ\right)$
for all $x\in D$. The pair $\fqmQ=\left(D,Q\right)$ is said
to be a \emph{finite quadratic module} (FQM). The \emph{level}
of $\fqmQ$ is defined as the smallest natural number, $l$,
such that $lQ\left(x\right)\in\ZZ$ for all $x\in D$. It turns
out that $\fqmQ$ determines a representation
of either the modular group, $\SLZ$, or its metaplectic cover, $\MPZ$,
 on $\mathbb{C}\left[D\right]$, the group algebra of $D$. Which one
of these two cases occur depends on the so-called signature of $\fqmQ$,
cf.~\eqref{eq:oddity_formula}. This representation can be
viewed as a special case of a construction carried out by Weil \cite{MR0165033}
and is therefore usually called the \emph{Weil representation} associated
to $\fqmQ$. For more information on the general theory of Weil
representations associated to finite quadratic modules see Skoruppa
\cite{skoruppa-weilrep}. 

The canonical example of a finite quadratic module is the so-called
\emph{discriminant form} associated to an even lattice $L$, with
non-degenerate bilinear form, given by the discriminant group $D=L'/L$,
where $L'$ is the dual lattice of $L$, together with the reduction
modulo one of the quadratic form on $L'$. Note that Borcherds \cite{MR1773561}
uses the term discriminant form for any FQM.
\begin{example}
\label{exa:Nx^2}Let $N$ be a positive integer and $L$ be the lattice
$\ZZ$ with quadratic form $q:x\mapsto Nx^{2}$. Then $D=L'/L\simeq\frac{1}{2N}\ZZ/\ZZ$
together with the quadratic form $Q\left(x\right)=Nx^{2}$ (mod $1$) is
an example of an FQM with level $4N$ and signature $1$. 
\end{example}
The main purpose of this paper is to obtain an explicit simple
formula for the action of $\SLZ$ or $\MPZ$ on the Weil representation corresponding to an arbitrary finite quadratic module $\fqmQ$.
Our main result is the following. For the precise statement see Theorem \ref{thm:main_thm} and Remark \ref{rem:main_thm_mat_elt}.

\begin{theorem*}
Let $\fqmQ=(D,Q)$ be an FQM and $\alpha,\beta\in D$. 
If $\mat{A}=\left(\begin{smallmatrix}a & b\\ c & d\end{smallmatrix}\right)\in\SLZ$ 
then the matrix coefficient $\r(\mat{A})_{\alpha\beta}$ is
given by
\[
\r(\mat{A})_{\alpha\beta}=\xi\left(\mat{A}\right)\sqrt{\left|D_{c}\right|/\left|D\right|}
e\big(acQ\left(\alpha'\right)+aB\left(x_{c},\alpha'\right)-bdQ\left(\beta\right)+bB\left(\beta,\alpha\right)\big)
\]
if there is an element $\alpha'\in D$ such that $\alpha=d\beta+x_{c}+c\alpha'$
and otherwise $\r\left(\mat{A}\right)_{\alpha,\beta}=0$. Here $\xi\left(\mat{A}\right)$ is an explicit eight-root of unity, given
in terms of $p$-adic invariants of $\fqmQ$.
The element $x_{c}\in D$ is of order $2$ and given by the $2$-components of $\fqmQ$, and $D_{c}$ consists
of all elements in $D$ of order dividing $c$. 
\end{theorem*}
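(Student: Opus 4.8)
The plan is to reduce the computation of $\r(\mat{A})_{\alpha\beta}$ to the action of the standard generators $S=\left(\begin{smallmatrix}0&-1\\1&0\end{smallmatrix}\right)$ and $T=\left(\begin{smallmatrix}1&1\\0&1\end{smallmatrix}\right)$ of $\SLZ$, whose action on $\CC[D]$ is elementary: $\r(T)$ acts diagonally through $e(Q(\beta))$, while $\r(S)$ is, up to the normalizing root of unity $\si$, the discrete Fourier transform with kernel $|D|^{-1/2}e(-B(\alpha,\beta))$. Since $S$ and $T$ generate the group, the coefficient attached to an arbitrary $\mat{A}$ is in principle an iterated Gauss sum, and the entire task is to bring this sum into the stated closed form.

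First I would dispose of the case $c=0$, where $ad=1$ forces $\mat{A}=\pm T^{\pm b}$; here the formula follows at once from the diagonal action of $T$ together with $\r(-I)=\r(S)^{2}$. For $c\neq0$ one may assume $c>0$, after splitting off $-I$ if necessary. The core step is then to expand $\mat{A}$ in the generators by means of the Euclidean algorithm (equivalently, a continued-fraction expansion of $a/c$) and to push the diagonal factors $\r(T^{n})$ through the Fourier transforms $\r(S)$. Each interior Fourier transform contributes a complete character sum which, by orthogonality, forces the intermediate summation variables into prescribed cosets; collapsing these sums is precisely what produces the support condition $\alpha=d\beta+x_{c}+c\alpha'$ and what replaces the naive normalization $|D|^{-1/2}$ per transform by the factor $\sqrt{|D_{c}|/|D|}$, where $D_{c}$ is the subgroup of elements of order dividing $c$. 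Completing the square in the one Gauss sum over $D_{c}$ that survives should yield the phase $e\big(acQ(\alpha')+aB(x_{c},\alpha')-bdQ(\beta)+bB(\beta,\alpha)\big)$, the order-two element $x_{c}$ entering as the obstruction coming from the $2$-component of $D$.

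The main obstacle, and the real content of the theorem, is the explicit identification of the remaining constant $\xi(\mat{A})$ as an eighth root of unity in terms of the $p$-adic invariants of $\fqmQ$. The surviving multiplicative constant is itself a quadratic Gauss sum over $D_{c}$, and to pin down its argument one must decompose $D$ into its $p$-components via the Jordan splitting of the finite quadratic module and evaluate each local Gauss sum in turn. For odd $p$ these are controlled by quadratic reciprocity and Legendre symbols, whereas the $2$-adic part requires the finer oddity and signature machinery; the latter is responsible both for the appearance of an eighth (rather than a fourth) root of unity and for the linear-versus-metaplectic dichotomy. I expect the delicate bookkeeping — applying Gauss-sum reciprocity (Milgram's formula) uniformly in $\mat{A}$ and assembling the local contributions into a single global invariant $\xi(\mat{A})$ — to be by far the most technical part, and it is exactly this local-to-global assembly that extends Scheithauer's even-signature result to the general case.
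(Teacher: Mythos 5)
Your sketch identifies the right ingredients (generators, support condition, Jordan decomposition, local Gauss sums), but the step your whole plan leans on contains a genuine gap --- and it is exactly the step that historically blocked this approach. The interior sums produced by pushing $\r(\mat{T}^{n_i})$ through successive copies of $\r(\mat{S})$ are \emph{not} complete linear character sums: between any two Fourier transforms sits the diagonal quadratic phase $e\big(n_iQ(\cdot)\big)$, so each interior sum is a full quadratic Gauss sum on $D$, and orthogonality alone does not collapse it. Collapsing each one requires the evaluation of $\sum_{\mu\in D}e\big(n_iQ(\mu)+B(\alpha,\mu)\big)$ (Lemma \ref{lem:Scheithauer3.9}), applied once per step of the continued-fraction expansion of $a/c$. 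You are then left with a product of local root-of-unity factors whose number of terms depends on the word length of $\mat{A}$, together with, in odd signature, a cocycle sign $\sigma(\cdot,\cdot)$ accumulated at every multiplication. Showing that this unbounded product telescopes to the single closed-form $\xi(\mat{A})$ of the theorem is precisely the difficulty the paper calls ``a totally different matter''; it is what left the classical formulas of Kloosterman, Schoeneberg and Pfetzer in the shape of Gauss-type sums of variable length. Your proposal gives no mechanism for this telescoping, and without one the argument does not close.

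The paper avoids the induction entirely by a different decomposition: choose integers $m>0$ and $n$ with $cn-d>0$, $(cn-d,l)=1$, $(cn-d)m\equiv c$ (mod $l$) and suitable congruences mod $8$, and write $\mat{A}=\mat{X}\cdot\mat{ST}^{m}\mat{ST}^{n}$ with $\mat{X}=\mat{A}\mat{T}^{-n}\mat{S}\mat{T}^{-m}\mat{S}\in\Gamma_{0}(l)$. The factor $\mat{X}$ is handled by the known action of $\Gamma_{0}(l)$ (Lemma \ref{lem:action_on_gamma_0}, ultimately a theta-multiplier statement), the factor $\mat{ST}^{m}\mat{ST}^{n}$ requires exactly \emph{one} application of Lemma \ref{lem:Scheithauer3.9} (via Lemma \ref{lem:rho(STmSTn)}), and the metaplectic cocycle is rendered harmless because $m,n$ are chosen so that $\sigma\left(\mat{X},\mat{ST}^{m}\mat{ST}^{n}\right)=1$. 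The residual work (Lemmas \ref{lem:kronecker(c',d')}, \ref{lem:xi2} and \ref{lem:xip}) is then the bounded task of removing the dependence on the auxiliary $m,n$. To salvage your route you would need either to prove by induction that the product of per-step factors depends only on $(a,c)$ and the $p$-adic invariants, or to switch to a coset decomposition relative to $\Gamma_{0}(l)$ as above; the second option is what makes the problem tractable.
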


In this context ``simple'' means that the formula for $\xi$ does
not involve any summation or integration, only elementary arithmetic
functions, e.g. Kronecker symbols. It is not difficult to obtain a
formula for $\r$ as a projective representation, that is, to obtain
the theorem above with an unknown factor $\xi$ of absolute value
one given in the form of a Gauss sum; together with a cocycle in
the case of odd signature. Evaluating this Gauss sum explicitly is,
however, a totally different matter. If $\fqmQ$ is a discriminant
form of level $l$ then explicit (and simple) formulas for $\xi$
has been known for a long time in the case when $\mat{A}$ belongs to certain
congruence subgroups of level $l$. Cf.~e.g.~Schoeneberg \cite[p.~518]{MR1513241},
Pfetzer \cite[pp.~451-452]{MR0059945} and Kloosterman \cite[I.\S4]{kloosterman}.
See also e.g.~\cite{skoruppa-weilrep}, \cite[Lemma 3.2]{MR1773561}
as well as Lemma \ref{lem:action_on_gamma_0^0} and Lemma \ref{lem:action_on_gamma_0}.

In contrast to these simple formulas the earlier formulas for the full modular
or metaplectic group all involved certain sums of Gauss-type with a length depending on
the particular element of the group. Cf.~e.g.~\cite[p.\ 516]{MR1513241},
\cite[p.\ 450]{MR0059945}, \cite[I. Thm.\ 1]{kloosterman}, \cite[Prop.\ 1.6]{shintani:75:onconstruction},
\cite[p.\ 519]{MR909238} and \cite[Prop.\ 3.2]{MR1280458}. 

This situation improved a great deal when Scheithauer \cite{scheithauer:weil_rep}
 obtained an explicit formula for the root of unity $\xi(\mat{A})$,
for any $\mat{A}$ in the modular group, in terms of the $p$-adic
invariants of the underlying lattice. Although these results were
restricted to discriminant forms of even signature, the generalization
to arbitrary FQMs of even signature is more or less immediate.
However, to obtain the corresponding results for FQMs of odd signature
requires more effort, mainly due to the necessity to
work with the metaplectic group.

The main points of the current paper are that we obtain explicit formulas for the Weil representation associated to 
\emph{any} FQM, without restrictions on the signature, and that all Gauss-type sums are explicitly
evaluated in terms of $p$-adic invariants. 

The computational aspects were foremost in mind when we obtained these formulas; we needed efficient
algorithms for the Weil representation in order to compute vector-valued
Poincar{\'e} series \cite{lfg} and harmonic weak Maass forms \cite{compmaass}. 
The formula stated in the Main Theorem is implemented as part of a package \cite{fqm} written in Sage \cite{sage} for computing
with finite quadratic modules.  

\subsection{Notational conventions}

To simplify the exposition we write $e\left(x\right)=e^{2\pi ix}$, $e_{r}(x)=e(\frac{x}{r})$ and use $\left(a,b\right)=\gcd\left(a,b\right)$.
Furthermore we always use the Kronecker extension of the Jacobi symbol,
$\left(\frac{c}{d}\right)$. For odd $c$ and $d$ with $d>0$ this
is the usual quadratic residue symbol, and for arbitrary integers $c,d$
we define $(\frac{c}{d})$ by complete multiplicativity,
using $(\frac{c}{d})=\sign(c)(\frac{c}{-d})$,
$(\frac{2}{d})=(\frac{d}{2})$ for odd $d$,
$(\frac{d}{0})=(\frac{0}{d})=1$ if $d=\pm1$ and $0$ otherwise. 
If $p$ is a prime number and $n$ is an integer we define the \emph{$p$-adic additive valuation} of $n$ by $\ord_p(n)=k$ if $p^k$ is the largest power of $p$ dividing $n$.
This is extended to the rational numbers by setting $\ord_p(\frac{c}{d})=\ord_p(c)-\ord_p(d)$,
and we use $|x|_{p}=p^{-\ord_{p}(x)}$ to denote the $p$-adic absolute value of $x$. 
For a finite set $S$ we use $\left|S\right|$
to denote the number of elements in $S$. If $a$ and $b$ are two integers then
the Hilbert symbol at infinity is defined as $\left(a,b\right)_{\infty}=-1$
if $a<0$ and $b<0$, and $\left(a,b\right)_{\infty}=1$ otherwise.
For a complex number, $z$, we use $\sqrt{z}$ to denote the principal
branch of the square root of $z$, that is, $\sqrt{z}=\sqrt{|z|}\exp\left(\frac{1}{2}i\Arg z\right)$
where $\Arg z\in\left(-\pi,\pi\right]$ is the principal branch of
the argument. Furthermore, let $\ZZ_{r}=\ZZ/r\ZZ$
and $\mathbb{\ZZ}_{r}^{\times n}=\ZZ_{r}\times\cdots\times\ZZ_{r}=$
($n$ times) and we write $m=\square$ to say that $m$ is the square
of an integer. 

For the remaining part of the paper we use the following convention:
If $\fqmQ$ is an FQM then the associated abelian group
is always denoted by $D$, the quadratic form by $Q$ and the associated
bilinear form by $B$. The structure of the paper is as follows: We
begin with a more detailed review of Jordan decompositions of finite
quadratic modules in Section \ref{sec:On-Jordan-Decompositions} and
follow this with an evaluation of those Gauss sums which are necessary
to express the local quantities of the number $\xi$ in the main theorem.
We then discuss the metaplectic cover of $\SLZ$ in more detail.
In Section \ref{sec:The-Weil-Representation} we then define the Weil
representation and show how it behaves under the action of certain
congruence subgroups of $\SLZ$, concluding with a precise formulation
of the main theorem. The proof of this is then presented in the last
section.

\section{On Jordan decompositions of finite quadratic modules}
\label{sec:On-Jordan-Decompositions}

The concept of a Jordan decomposition is well-known from linear algebra and it is also
very useful for FQMs. We repeat many facts from Scheithauer \cite{scheithauer:weil_rep} but aim to provide more details.
Let $\fqmQ=\left(D,Q\right)$ be an FQM and let $B$ be the associated bi-linear form.
An elementary, but important, observation is that the notion of a finite
quadratic module fits nicely into the general framework of abstract
lattices and quadratic forms over Dedekind domains, as defined by O'Meara
\cite[Part 4]{MR0347768}. 
Consider the action of $\ZZ$ on $D$ given by multiplication.
 For $c\in \ZZ$ we define  the map $\varphi_{c}:D\rightarrow D$ by $\varphi_c(\gamma)=c\gamma$ and then use $D_{c}$
and $D^{c}$ to denote the kernel and image of $\varphi_c$, respectively. 
That is, $D_{c}$ consists of all elements in $D$ of order dividing $c$, and $D^{c}$ is the set of all the $c$-th
powers of elements of $D$. Note that $D^{c}$ is the orthogonal complement of $D_{c}$. Define 
\[
D^{c*}=\left\{ \alpha\in D\,:\,\psi_{c,\gamma}\left(\alpha\right)=0 \text{ (mod $1$)}\,\,\forall\gamma\in D_{c}\right\} =\cap_{\gamma\in D_{c}}\text{Ker}\left(\psi_{c,\gamma}\right),
\]
where $\psi_{c,\gamma}\left(\alpha\right)=cQ\left(\gamma\right)+\left(\alpha,\gamma\right)$.
It is clear that if $\left(c,\left|D\right|\right)=\left(d,\left|D\right|\right)$
then $D_{c}=D_{d}$, $D^{c}=D^{d}$ and $D^{c*}=D^{d*}$. In particular,
if $\left(c,\left|D\right|\right)=1$ then $D_{c}=\left\{ 0\right\} $
and $D^{c}=D^{c*}=D$, and if $\left|D\right||c$ then $D^{c}=\left\{ 0\right\} ,$
$D_{c}=D$.  

It is well-known that $D$ can be written uniquely as a direct sum of
$p$-groups, that is, cyclic subgroups of prime-power orders. For
our purposes we need a refinement into $q$-groups: $D\left[q\right]=\left\{ \gamma\in D\,:\, q\gamma=0\right\} $
with $q=p^{k}$ for some prime $p$ dividing $\left|D\right|$ and
positive integer $k$. We introduce parameters $n\ge1$ (the rank of
$D\left[q\right]$) and $\epsilon\in\left\{ \pm1\right\}$ depending
on $Q$ restricted to $D\left[q\right]$. We write this decomposition
of $D$ as 
\begin{equation}
D=\bigoplus_{p\mid\,|D|}\bigoplus_{p|q}D\left[q^{n\epsilon}\right].\label{eq:Jordan-decomposition}
\end{equation}
When there is risk of confusion we write $n_{q}$ and $\epsilon_{q}$
instead of $n$ and $\epsilon$. This decomposition is orthogonal
with respect to $B$ and is called the \emph{Jordan decomposition}
of $\fqmQ$ (cf.~e.g.~\cite[\S 91C]{MR0347768}). It is unique
except for $p=2$ (cf.~e.g.~\cite[p.\ 381]{MR1662447}). When 
 $D$ is given we usually write $q^{\epsilon n}$ instead of
$D\left[q^{\epsilon n}\right]$. 

For later reference it is useful to have an explicit description of all
Jordan components which can appear. The following lemma is given by Scheithauer \cite[pp.\ 5-6]{scheithauer:weil_rep} 
and is also proven by Skoruppa \cite[Ch.~1]{skoruppa-weilrep}; it is not hard to deduce using results of  
Conway-Sloane \cite{MR1662447} and Nikulin \cite[\S 1]{MR525944} (see in particular \cite[p.~113]{MR525944} for the even $2$-adic components).
\begin{lemma}
\label{lem:def-of-component}The possible non-trivial Jordan components of $\fqmQ$
are the following:

$p>2$: $D\left[q^{\epsilon n}\right]\simeq\ZZ_{q}^{\times n}$
where $q=p^{k}$, $k\ge1$, $\epsilon=\pm1$ and $n\in\mathbb{N}$.
The indecomposable components are of the form: $D\left[q^{\epsilon}\right]\simeq\ZZ_{q}$,
generated by an element $\gamma$ of order $q$ with $Q\left(\gamma\right)\equiv\frac{a}{q}$ \textup{(mod $1$)}
for some $a\in\ZZ$ with $(\frac{2a}{p})=\epsilon$.
We define the \emph{$p$-excess} of this component by 
\[
\pexcess\left(q^{\epsilon n}\right)=n\left(q-1\right)+4k,\quad k=
\begin{cases}
1 & \text{if } q\ne\square \text{ and }\epsilon=-1,\\
0 & \text{otherwise.}
\end{cases}
\]

$p=2$, odd type: $D\left[q_{t}^{\epsilon n}\right]\simeq\ZZ_{q}^{\times n}$
where $q=2^{k},$ $k\ge1,$ $\epsilon=\pm1$, $n\in\mathbb{N}$, and
$t\in\ZZ/8\ZZ$ satisfies $t\equiv n$ \textup{(mod $2$)}. 
The indecomposable components are of the form: $D\left[q_{t}^{\epsilon}\right]\simeq\ZZ_{q}$
with $\epsilon=\left(\frac{t}{2}\right)$, generated by an element
$\gamma$ of order $q$ with $Q(\gamma)\equiv\frac{t}{2q}$ \textup{(mod $1$)}.
We define the \emph{oddity} of this component by
\[
\oddity\left(q_{t}^{\epsilon n}\right)=t+4k,\quad k=
\begin{cases}
1 & \text{if } q\ne\square\,\,\text{and }\epsilon=-1,\\
0 & \text{otherwise.}
\end{cases}
\]

$p=2$, even type: $D\left[q^{\epsilon2n}\right]\simeq\ZZ_{q}^{\times n}\times\ZZ_{q}^{\times n}$
where $q=2^{k},$ $k\ge1,$ $\epsilon=\pm1$, $n\in\mathbb{N}$. 
The indecomposable components are of the form
 $D\left[q^{2\epsilon}\right]\simeq\ZZ_{q}\times\ZZ_{q}$,
generated by two elements $\gamma$ and $\delta$ of order $q$. The gram matrix
of $B$ restricted to this component is 
\[
\frac{1}{q}\left(\begin{array}{cc}
0 & 1\\
1 & 0\end{array}\right) \textup{ (mod $1$)} \quad\text{if }\,\epsilon=1,\quad\frac{1}{q}\left(\begin{array}{cc}
2 & 1\\
1 & 2\end{array}\right)\textup{ (mod $1$)}\quad\text{if }\,\epsilon=-1,
\]
and we define the \emph{oddity } of this component by
\[
\oddity\left(q^{\epsilon n}\right)=4k,\quad k=
\begin{cases}
1 & \text{if }\,q\ne\square\,\text{ and }\,\epsilon=-1,\\
0 & \text{otherwise.}
\end{cases}
\]
\end{lemma}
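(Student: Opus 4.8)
The existence of the orthogonal Jordan splitting \eqref{eq:Jordan-decomposition} into homogeneous constituents of fixed scale $q$ is already granted by O'Meara \cite[\S 91C]{MR0347768}, so the content of the lemma is twofold: to classify the indecomposable forms that can occur on a single such constituent, and to verify the stated formulas for the $\pexcess$ and $\oddity$. The plan is therefore to fix a prime power $q=p^{k}$ and analyze a component $D[q^{\ldots}]$, which is a module over $\ZZ_{q}$ on which $B$ takes values in $\tfrac{1}{q}\ZZ/\ZZ$ non-degenerately, treating the cases $p>2$ and $p=2$ separately.

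For odd $p$ the analysis is essentially linear algebra over $\ZZ_{q}$. Since $2$ is a unit in $\ZZ_{q}$, the relation $B(x,x)=2Q(x)$ shows that $Q$ is recovered from $B$, and a Gram--Schmidt procedure then diagonalizes $B$, exhibiting the constituent as an orthogonal sum of rank-one pieces $\ZZ_{q}$ with $Q(\gamma)\equiv a/q$. The class of $a$ in $\ZZ_{q}^{\times}/(\ZZ_{q}^{\times})^{2}$ is the only invariant of such a piece, and I would record it by the Kronecker symbol $\left(\tfrac{2a}{p}\right)=\epsilon$; the rank $n$ together with the product of these signs (the determinant class) then classifies the constituent up to isometry. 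The $\pexcess$ formula follows by checking its definition on each rank-one summand and using multiplicativity, the correction term $4k$ reflecting the single nonsquare diagonal entry that is forced precisely when $\epsilon=-1$ and $q\ne\square$.

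The case $p=2$ is where the real work lies and is the step I expect to be the main obstacle. Now $2$ is not invertible, $Q$ carries strictly more information than $B$, and the classification must separate the \emph{odd} indecomposables (rank-one pieces $\ZZ_{q}$ with $Q(\gamma)\equiv t/(2q)$ for odd $t$, on which $B$ has odd diagonal) from the \emph{even} ones (rank-two modules on which $B$ has vanishing diagonal). I would invoke the $2$-adic classification of Conway--Sloane \cite{MR1662447} together with the even $2$-adic tables of Nikulin \cite[\S 1, p.~113]{MR525944} to see that these are the only possibilities: the odd pieces are parametrized by $t\in\ZZ/8\ZZ$ subject to $t\equiv n\pmod 2$, with $\epsilon=\left(\tfrac{t}{2}\right)$, and the even pieces are exactly the two rank-two forms whose Gram matrices are displayed in the statement, one for $\epsilon=1$ and one for $\epsilon=-1$. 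The delicate points are the well-known failure of uniqueness of the Jordan splitting at $p=2$, so that what is classified are isometry classes rather than literal summands, and the correct normalization of the $\oddity$.

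To pin down the $\oddity$ I would compute the local Gauss sum $\sum_{\gamma}e(Q(\gamma))$ on each indecomposable and match its argument against the signature modulo $8$ via Milgram's formula, which again isolates the $4k$ correction in the case $\epsilon=-1$, $q\ne\square$. The remainder is the routine bookkeeping of translating the lattice-theoretic genus symbols of \cite{MR1662447,MR525944} into the language of the finite quadratic module $(D,Q)$ and reading off $\epsilon$, $t$, $n$, and $q$ from the resulting data.
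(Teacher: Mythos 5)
The paper gives no proof of this lemma at all: it is quoted from Scheithauer and Skoruppa, with the remark that it can be deduced from Conway--Sloane and Nikulin, which is precisely the deduction you sketch. So your route (O'Meara for the orthogonal splitting, elementary diagonalization over $\ZZ_q$ for odd $p$, the Conway--Sloane/Nikulin tables at $p=2$) matches the paper's intended argument, and the classification part of your proposal is fine. Two caveats, though. First, the $\pexcess$ and oddity formulas are \emph{definitions} made inside the lemma statement, not claims, so there is nothing to ``verify'' and your final step is superfluous. Second, and more importantly, that final step would be circular if taken as part of this paper's logic: here the signature of an FQM is \emph{defined} by the oddity formula \eqref{eq:oddity_formula}, and Milgram's formula \eqref{eq:milgrams_formula} for a general FQM is only established afterwards, via Lemma \ref{lem:gauss_sum_eq_gamma_p}, whose proof rests on the present lemma and the Gauss-sum evaluations of Section \ref{sec:gaussums}. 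Matching Gauss sums against Milgram's formula to pin down the oddity normalization is legitimate for discriminant forms of even lattices, where the Milnor--Husemoller proof and the lattice signature exist independently, but for an arbitrary FQM the paper's logical order runs the other way. Since the classification of indecomposables is the only actual assertion of the lemma and you handle it correctly by citation, this does not invalidate the proposal; simply drop the Milgram step.
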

\begin{remark}
The sum of two Jordan components with the same $q$ is given by multiplying the signs, adding the ranks and 
adding any subscripts $t$. Any trivial component, that is, if $q=1$ or $n=0$, have zero $\pexcess$ and oddity.
Sometimes the names ``type I'' and ``type II'', are used for the odd and even $2$-adic components, respectively.
\end{remark}
\begin{example}
Let $N$ be a positive integer and write $2N=p^{m_{p}}N_{p}$ with
$p\nmid N_{p}$ for each prime $p$ dividing $2N$. Then $\fqmQ_{N}=\left(D,Q\right)$
with $D\simeq\frac{1}{2N}\ZZ/\ZZ$ and $Q\left(x\right)=Nx^{2}$
has the following Jordan components. For $p>2$: $D\left[q^{\epsilon_{p}}\right]$
with $q=p^{m_{p}}$ and $\epsilon_{p}=(\frac{2N_{p}}{p})$.
For $p=2$: $D\left[q_{t}^{\epsilon_{2}}\right]$ with $q=2^{m_{2}}$,
$t=N_{2}$ and $\epsilon_{2}=(\frac{N_{2}}{2})$. For $p\ge2$ the $p$-adic
component is generated by $\gamma=\frac{N_{p}}{2N}$.
\end{example}
If $J$ is a $p$-adic Jordan component we define the eight root of unity $\gammafak_{p}(J)$ by 
\begin{equation}
\gammafak_{p}(J)=
\begin{cases}
  e_{8}\big(-\pexcess(J)\big) & \text{if } p>2, \label{eq:def:gamma_p} \\
  e_{8}\big(\oddity(J)\big)& \text{if } p=2.
\end{cases}
\end{equation}
Using Lemmas \ref{lem:g_q=00003Dgamma_q_pgt2}, \ref{lem:g_q=00003Dgamma_q_odd_2}
and \ref{lem:g_q=00003Dgamma_q_even2} these factors can be computed explicitly and 
\begin{align*}
\gammafak_{p}\left(q^{n\epsilon}\right) & =\left(\frac{2a}{q}\right)e_{8}\big(n(1-q)\big)\quad \text{if }\, 2<p|q,\\
\gammafak_{2}\left(q_{t}^{n\epsilon}\right) & =\left(\frac{t}{q}\right)e_{8}(t)\quad\text{if }\, 2|q \textrm{ and }\,J\,\text{ is }\,\text{odd $2$-adic,}\\
\gammafak_{2}\left(q^{2n\epsilon}\right) & =\left(\frac{2-\epsilon}{q}\right)\quad \text{if }\, 2|q \text{ and }\, J\,\text{ is }\,\text{even $2$-adic}, 
\end{align*}
where $a$ is given in Lemma \ref{lem:def-of-component}. 
If $\fqmQ$ is an FQM then the oddity and $\pexcess$ of $\fqmQ$ are defined by summing over its Jordan components. 
Combining these local $p$-adic invariants we define the \emph{signature} of $\fqmQ$, $\sign(\fqmQ)\in\ZZ/8\ZZ$, by the \emph{oddity formula}:
\begin{equation}
\sign(\fqmQ)\equiv\oddity\left(\fqmQ\right)-\sum_{p>2}\pexcess(\fqmQ)\text{ (mod $8$)}.\label{eq:oddity_formula}
\end{equation}
The proof of \eqref{eq:oddity_formula} in the case of a discriminant form (cf.~e.g.~\cite[pp.\ 371, 383]{MR1662447})
is based on the orthogonality of the Jordan components and \emph{Milgram's formula}:
\begin{equation}
\frac{1}{\sqrt{\left|D\right|}}\sum_{\mu\in D}e\big(Q(\mu)\big)=e_{8}\big(\sign(\fqmQ)\big),\label{eq:milgrams_formula}.
\end{equation}
A proof of this formula is given by Milnor and Husemoller \cite[Appendix 4]{MR0506372}.
In our case, we instead use Lemma \ref{lem:gauss_sum_eq_gamma_p} together with the definition of signature to show that \eqref{eq:milgrams_formula} holds 
for any FQM.

The \emph{level} of a Jordan component $J$ is the smallest
positive integer $l$ such that the restriction of $Q$ to $J$ satisfies  $lQ\equiv0$ (mod $1$). 
From Lemma \ref{lem:def-of-component} we see immediately that the level
of an odd $2$-adic component $\left[q_{t}^{\epsilon n}\right]$ is
$2q$ and that of any other component $\left[q^{\epsilon n}\right]$
is exactly $q$. The level of an FQM is the product of the
levels of all its Jordan components, and if the signature is odd then the level is divisible by $4$.
\begin{lemma}
\label{lem:x_c_def}
Let $(D,Q)$ be an FQM with level $l$, and let $c$ be an integer. Then there exists a unique element
$x_{c}\in D$ such that $D^{c*}=x_{c}+D^{c}$. Furthermore, this element
can be chosen as follows:
\begin{itemize}
\item If $2^{k}||c$ and $D$ has an odd $2$-adic Jordan
component $D\left[q_{t}^{\epsilon n}\right]$ with $q=2^{k}$ then
\[
x_{c}=2^{k-1}\sum\nolimits_{i}^{}\gamma_{i}, 
\]
where $\left\{ \gamma_{i}\right\} $
is an orthogonal basis of $D\left[q_{t}^{\epsilon n}\right]$. In this case $Q\left(x_{c}\right)\equiv\frac{ntq}{8}$ \textup{(mod $1$)}.
\item Otherwise we may take $x_{c}=0$.
\end{itemize}
In all cases: $2x_{c}=0$, $cx_{c}=0$ and if $\left(c,l\right)=\left(m,l\right)$
then $x_{c}=x_{m}$.
\end{lemma}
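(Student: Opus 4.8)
The plan is to split the argument into a structural part that works for any FQM and a computational part that identifies the explicit representative component by component. The starting observation is that $\gamma\mapsto cQ(\gamma)$ is a \emph{homomorphism} $D_c\to\QQ/\ZZ$: for $\gamma\in D_c$ and $\gamma'\in D_c$ we have $cB(\gamma,\gamma')=B(c\gamma,\gamma')=0$ since $c\gamma=0$, so $cQ(\gamma+\gamma')\equiv cQ(\gamma)+cQ(\gamma')\pmod 1$. Hence, for fixed $\alpha$, the map $\gamma\mapsto\psi_{c,\gamma}(\alpha)=cQ(\gamma)+B(\alpha,\gamma)$ is the homomorphism $\big(cQ+B(\alpha,\cdot)\big)\big|_{D_c}$, and $D^{c*}$ is exactly the set of $\alpha$ with $B(\alpha,\cdot)\equiv-cQ(\cdot)$ on $D_c$. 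Existence and the coset structure then follow from nondegeneracy: since $\alpha\mapsto B(\alpha,\cdot)$ is an isomorphism $D\to\mathrm{Hom}(D,\QQ/\ZZ)$ and $\QQ/\ZZ$ is divisible, hence injective as a $\ZZ$-module, the restriction $\alpha\mapsto B(\alpha,\cdot)|_{D_c}$ maps $D$ \emph{onto} $\mathrm{Hom}(D_c,\QQ/\ZZ)$. Applying this to the homomorphism $-cQ|_{D_c}$ yields an element $x_c$ with $\psi_{c,\gamma}(x_c)\equiv0$ for all $\gamma\in D_c$, so $D^{c*}\neq\varnothing$; as the kernel of this restriction is $(D_c)^\perp=D^c$, we get $D^{c*}=x_c+D^c$, a uniquely determined coset of $D^c$. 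This is the asserted existence and uniqueness.

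It remains to compute $-cQ|_{D_c}$ explicitly, and here I would pass to the orthogonal Jordan decomposition \eqref{eq:Jordan-decomposition}, under which $D_c$, $D^c$ and $cQ|_{D_c}$ split as orthogonal sums; it therefore suffices to evaluate $cQ$ on the elements of $D_c$ inside a single component. A clean a priori bound is $2cQ(\gamma)=cB(\gamma,\gamma)=B(c\gamma,\gamma)=0$, so $cQ$ takes values in $\tfrac12\ZZ/\ZZ$ throughout. For a component of scale $q=p^{k}$ an element of $D_c$ has the form $p^{\,k-\min(j,k)}\gamma$ with $j=\ord_2(c)$ (for $p=2$; the analogous count for odd $p$), and a direct $p$-adic valuation count of $cQ\big(p^{\,k-\min(j,k)}\gamma\big)$ shows $cQ\equiv0$ on $D_c$ in every case \emph{except} when the component is odd $2$-adic and $2^{k}\,\|\,c$, i.e.\ the borderline $j=k$, where the exponent of $2$ drops to $-1$. (For odd order $\gamma$ the factor $2$ in $2cQ(\gamma)=0$ is invertible, forcing the value $0$; for even type $2$-adic components the valuation count again gives $0$.) Thus $x_c$ may be taken to be $0$ on all components except the matching odd $2$-adic one, and there the homomorphism is $\sum_i u_i\gamma_i\mapsto\tfrac12\sum_i u_i$.

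On that component $D[q_t^{\epsilon n}]$ with $q=2^{k}$ and orthogonal basis $\{\gamma_i\}$, where $Q(\gamma_i)\equiv t_i/(2q)$ with each $t_i$ odd and $\sum_i t_i\equiv t\pmod 8$, I would verify that $x_c=2^{k-1}\sum_i\gamma_i$ realizes the required homomorphism: $B\big(x_c,\sum_j u_j\gamma_j\big)=\sum_i 2^{k-1}u_i t_i/q\equiv\tfrac12\sum_i u_i\pmod 1$ because the $t_i$ are odd, so $x_c\in D^{c*}$. The remaining properties are short computations: $2x_c=2^{k}\sum_i\gamma_i=0$; writing $c=2^{k}c'$ with $c'$ odd, $cx_c=c'2^{2k-1}\sum_i\gamma_i=0$ since $2k-1\ge k$; and $Q(x_c)=\sum_i 2^{2(k-1)}Q(\gamma_i)\equiv\tfrac{q}{8}\sum_i t_i\pmod 1$ by orthogonality, which is the stated value of $Q(x_c)$ after reducing $\sum_i t_i$ modulo $8$ to $t$.

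Finally, for the invariance $x_c=x_m$ when $(c,l)=(m,l)$, I would note that the only datum entering the construction is whether $\ord_2(c)$ equals the scale exponent $k$ of an odd $2$-adic component; such a $k$ satisfies $k<\ord_2(l)$, since the level of that component is $2q=2^{k+1}\mid l$. Hence $\min(\ord_2(c),\ord_2(l))=\min(\ord_2(m),\ord_2(l))$, which is forced by $(c,l)=(m,l)$, already decides the matching identically for $c$ and $m$, giving $x_c=x_m$. The main obstacle is the $2$-adic valuation bookkeeping that isolates the single borderline case $j=k$ and the verification that the order-$2$ element $2^{k-1}\sum_i\gamma_i$ is precisely the representative dictated by the homomorphism $-cQ|_{D_c}$; once these are in hand, the rest is formal.
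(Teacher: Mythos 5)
Your proof is correct, and its architecture differs from the paper's in a useful way. The paper proves abstractly only that $D^{c*}$ meets at most one coset of $D^{c}$ (differences of elements of $D^{c*}$ lie in $D_{c}^{\perp}=D^{c}$); the existence of $x_{c}$ comes only out of the explicit construction. You obtain existence first, from the observation that $cQ|_{D_{c}}$ is a homomorphism together with the surjectivity of $\alpha\mapsto B(\alpha,\cdot)|_{D_{c}}$, which follows from nondegeneracy and the injectivity (divisibility) of $\QQ/\ZZ$ as a $\ZZ$-module; the Jordan-component computation is then a pure identification of the coset representative. This also makes your verification more complete than the paper's at one point: membership in $D^{c*}$ must be tested against \emph{all} $\beta\in D_{c}$, and $D_{c}$ contains partial torsion inside components of scale $q\nmid c$ (for instance the subgroup of order $2$ of a component isomorphic to $\ZZ_{4}$ when $2\parallel c$). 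The paper works from the displayed identity $D_{c}=\bigoplus_{q|c}D[q^{\epsilon n}]$, which omits exactly these subgroups, and then tests only $\beta=b\gamma_{i}$; your valuation count showing $cQ\equiv 0$ on the omitted part of $D_{c}$ (where $B(x_{c},\cdot)$ vanishes by orthogonality) is precisely what is needed to close this. Likewise, your $\min(\ord_{2}(c),\ord_{2}(l))$ argument for the invariance $x_{c}=x_{m}$ is an actual proof of a claim the paper dismisses as easy to verify.

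One point you should not gloss over: your computation yields $Q(x_{c})\equiv\frac{q}{8}\sum_{i}t_{i}\equiv\frac{qt}{8}$ (mod $1$), using the paper's convention (Lemma \ref{lem:def-of-component} and the subsequent Remark) that the subscripts of the indecomposable summands add up to $t$ modulo $8$. That is not literally ``the stated value'': the lemma asserts $Q(x_{c})\equiv\frac{ntq}{8}$, and the two disagree in general. For example, take $q=2$, $n=2$, $t_{1}=t_{2}=1$, so $t=2$: then $x_{c}=\gamma_{1}+\gamma_{2}$ has $Q(x_{c})=\frac{1}{4}+\frac{1}{4}=\frac{1}{2}=\frac{qt}{8}$, whereas $\frac{ntq}{8}\equiv 0$ (mod $1$). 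Your value is the one forced by the paper's conventions, and it is independent of the chosen orthogonal basis, since a change of basis alters $\sum_{i}t_{i}$ only by a multiple of $8$; the formula $\frac{ntq}{8}$ would be correct only under the reading that every basis vector satisfies $Q(\gamma_{i})\equiv\frac{t}{2q}$, which contradicts the additivity of the subscripts. So either flag the lemma's displayed value as a misprint or state explicitly which convention you are using; as written, your final reduction step claims an agreement that does not hold.
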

\begin{proof}
If $\alpha,\beta\in D^{c*}$ then $(\gamma,\alpha-\beta)\equiv cQ(\gamma)-cQ(\gamma)\equiv 0$ (mod $1$)
for any $\gamma\in D_{c}$. Hence $\alpha-\beta\in D_{c}^{\perp}=D^{c}$.
It follows that $D^{c*}/D^{c}$ has exactly one equivalence class,
say $\left[x_{c}\right]$ for some $x_{c}\in D^{c*}$. 
Let $k=\ord_2(c)$, that is, $2^{k}\parallel c$ and choose a fixed a Jordan decomposition of
$(D,Q)$ as above. Then $D\left[q^{\epsilon n}\right]\subseteq D_{q}\subseteq D_{c}$
for each $q|c$ and 
\[
D_{c}=\bigoplus_{2|q|c}D\left[q^{\epsilon n}\right]\bigoplus_{2<p|q|c}D\left[q^{\epsilon n}\right].
\]
Let $\left\{ \gamma_{i}\right\}$ be an orthonormal basis of the odd
component with $q=2^{k}$, where $\gamma_{i}$ is chosen as in Lemma
\ref{lem:def-of-component}. Note that $cQ\left(\gamma\right)\equiv0$ (mod $1$)
for $\gamma\in D\left[q^{\epsilon n}\right]$ with $q|c$ unless the
component is odd $2$-adic with $q=2^{k}$, in which case we instead
have $2cQ\left(\gamma\right)\equiv0$ (mod $1$). We now show that
$x_{c}=\sum_{i}a_{i}\gamma_{i}$ belongs to $D^{c*}$ if and only
if $cQ\left(\beta\right)+B\left(x_{c},\beta\right)=0$ for any $\beta\in D_{c}$.
It is enough to consider $\beta$ of the form $b\gamma_i$ and hence 
\[
cb^{2}Q\left(\gamma_{i}\right)+2ba_{i}Q\left(\gamma_{i}\right)\equiv\frac{bt}{2q}\left(cb+2a_{i}\right)\equiv0 \text{ (mod $1$)}
\]
must hold for each $i$ and each integer $b$. It follows that we can choose $a_{i}=\frac{1}{2}q=2^{k-1}$
and then $x_{c}$ clearly has the required form. If $x_{c}\ne0$ it is 
easy to compute the norm and to verify that the remaining properties
are satisfied. 
\end{proof}
\begin{lemma}[{\cite[Prop.~2.2]{scheithauer:weil_rep}}]
\label{lem:Q_c_def}
Let $(D,Q)$ be an FQM with level $l$, and let $c$ be an integer. Then the map $Q_c:D^{c*}\rightarrow \QQ/\ZZ$, which is defined by 
\[
Q_{c}(\alpha)  = cQ(y)+B(x_{c},y)\textup{ (mod $1$)},
\]
where $y\in D$ is given by $cy=\alpha-x_{c}$, is well-defined.
Furthermore, $cQ_{c}(\alpha)\equiv Q(\alpha)-Q(x_{c})$ \textup{(mod $1$)}.
 In particular, if $(c,l)=1$
then $Q_{c}(\alpha)\equiv \bar{c}Q(\alpha)$ \textup{(mod $1$)}
where $c\bar{c}\equiv1$ \textup{(mod $l$)} and if $l|c$ then $Q_{c}(\alpha)\equiv0$ \textup{(mod $1$)}.
\end{lemma}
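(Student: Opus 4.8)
The plan is to verify the claims in their natural order: first that a representative $y$ exists for every $\alpha\in D^{c*}$, then that the value $cQ(y)+B(x_c,y)$ is independent of the choice of $y$, next the displayed congruence $cQ_c(\alpha)\equiv Q(\alpha)-Q(x_c)$, and finally the two special cases $(c,l)=1$ and $l\mid c$. Existence is immediate from Lemma~\ref{lem:x_c_def}: since $D^{c*}=x_c+D^c$, any $\alpha\in D^{c*}$ satisfies $\alpha-x_c\in D^c=\varphi_c(D)$, so there is some $y\in D$ with $cy=\alpha-x_c$.

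For well-definedness I would use the fact, also furnished by Lemma~\ref{lem:x_c_def}, that $x_c$ itself lies in $D^{c*}$ (take $0\in D^c$), so that by the very definition of $D^{c*}$ we have $\psi_{c,\gamma}(x_c)=cQ(\gamma)+B(x_c,\gamma)\equiv0$ (mod $1$) for every $\gamma\in D_c$. If $cy=cy'=\alpha-x_c$ then $\delta:=y-y'\in D_c$, and expanding $Q(y)=Q(y'+\delta)$ through the bilinear form gives
\[
\big(cQ(y)+B(x_c,y)\big)-\big(cQ(y')+B(x_c,y')\big)=cQ(\delta)+cB(y',\delta)+B(x_c,\delta).
\]
The middle term vanishes because $cB(y',\delta)=B(y',c\delta)=0$ (as $c\delta=0$), and the remaining $cQ(\delta)+B(x_c,\delta)=\psi_{c,\delta}(x_c)\equiv0$. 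Hence $Q_c$ is well defined. This is the only step that genuinely invokes the defining property of $D^{c*}$, and I expect it to be the main (though still short) obstacle.

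The congruence $cQ_c(\alpha)\equiv Q(\alpha)-Q(x_c)$ is then a direct computation. Multiplying the definition by $c$ and using $cy=\alpha-x_c$, I would compute $c^2Q(y)=Q(cy)=Q(\alpha-x_c)=Q(\alpha)+Q(x_c)-B(\alpha,x_c)$ and $cB(x_c,y)=B(x_c,\alpha-x_c)=B(x_c,\alpha)-2Q(x_c)$, where I use $Q(-x_c)=Q(x_c)$ and $B(x_c,x_c)=2Q(x_c)$. Adding these and cancelling $B(\alpha,x_c)=B(x_c,\alpha)$ by symmetry leaves exactly $Q(\alpha)-Q(x_c)$.

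For the special cases I would first observe that both hypotheses force $x_c=0$: by Lemma~\ref{lem:x_c_def} a nonzero $x_c$ requires $2^{k}\parallel c$ with $k\ge1$ and an odd $2$-adic Jordan component of order $q=2^{k}$, whose level $2q=2^{k+1}$ divides $l$; this is incompatible both with $(c,l)=1$ (which then forces $c$ odd) and with $l\mid c$ (which would force $2^{k+1}\mid c$). With $x_c=0$ one has $Q_c(\alpha)=cQ(y)$ where $cy=\alpha$. If $l\mid c$ then $cQ(y)=\tfrac{c}{l}\,lQ(y)\equiv0$. If $(c,l)=1$ then $\varphi_c$ is invertible on $D$ (the exponent of $D$ divides $l$), so taking $y=\bar c\alpha$ with $c\bar c\equiv1$ (mod $l$) gives $Q_c(\alpha)=c\bar c^{\,2}Q(\alpha)=\bar c\,(c\bar c)Q(\alpha)\equiv\bar c\,Q(\alpha)$, using $lQ(\alpha)\equiv0$.
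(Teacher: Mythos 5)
Your proposal is correct in all its steps: existence of $y$ from $D^{c*}=x_c+D^c$, well-definedness via $x_c\in D^{c*}$ and $\psi_{c,\delta}(x_c)\equiv 0$ for $\delta\in D_c$, the computation $c^2Q(y)+cB(x_c,y)=Q(\alpha-x_c)+B(x_c,\alpha-x_c)=Q(\alpha)-Q(x_c)$ using $B(x_c,x_c)=2Q(x_c)$, and the reduction of both special cases to $x_c=0$. Note, however, that the paper itself offers no proof of this lemma: it is stated with a citation to Scheithauer's Proposition~2.2 and used as a black box, so there is nothing internal to compare against. Your argument is essentially the standard one (and the same in spirit as Scheithauer's): the only point of substance is exactly the one you identify, namely that independence of the choice of $y$ follows from $x_c$ lying in $D^{c*}$, since $\delta=y-y'\in D_c$ kills the cross term $cB(y',\delta)$ and leaves $cQ(\delta)+B(x_c,\delta)\equiv 0$. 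Two small facts you invoke deserve the one-line justifications you essentially give: that the exponent of $D$ divides $l$ (from $B(lx,y)=lB(x,y)\equiv 0$ and nondegeneracy), which legitimizes both $c\bar c\alpha=\alpha$ and the invertibility of $\varphi_c$ when $(c,l)=1$; and that a nonzero $x_c$ forces $2^{k+1}\mid l$ with $2^k\parallel c$, $k\ge 1$, which rules out both $(c,l)=1$ and $l\mid c$. With those noted, your write-up is a complete, self-contained proof of a statement the paper leaves to the literature.
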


\section{Evaluation of Gauss sums}
\label{sec:gaussums}
An important step towards the explicit formulas for the Weil representation is the evaluation of Gauss sums of the type $\sum_{\mu\in D}e(Q(\mu))$. 
The different components in a Jordan decomposition are orthogonal with respect to the bilinear form. 
Hence the corresponding Gauss sum separates into local factors of the form 
\begin{equation}
g(J)=\frac{1}{\sqrt{|J|}}\sum_{\mu\in J}e\big(Q(\mu)\big)\label{eq:gfactor}. 
\end{equation}
Using a local version of Milgram's formula \eqref{eq:milgrams_formula} these sums can be expressed using the $\pexcess$ and oddity.
However, it is hard to find an easily accessible proof of this result (cf.~e.g.~\cite[Prop.\ 3.1]{scheithauer:weil_rep}) in the literature. 
To keep the exposition self-contained we spend the remainder of this subsection proving the following lemma.
\begin{lemma}
\label{lem:gauss_sum_eq_gamma_p}Let $\fqmQ$ be an FQM and
let $J$ be a Jordan component of $\fqmQ$ of order $q=p^{k}$ where 
$p$ is a prime and $k\ge0$. Then $g(J)=\gammafak_{p}(J)$ where $\gammafak_{p}(J)$ is defined by \eqref{eq:def:gamma_p}.
\end{lemma}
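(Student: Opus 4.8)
The plan is to reduce, by multiplicativity, to the indecomposable Jordan blocks classified in Lemma~\ref{lem:def-of-component}, and then to evaluate the resulting one- and two-variable Gauss sums explicitly, matching each against $\gammafak_{p}$. First I would record that $g$ is multiplicative over orthogonal direct sums: if $J=J_{1}\perp J_{2}$ then $Q(\mu_{1}+\mu_{2})=Q(\mu_{1})+Q(\mu_{2})$ for $\mu_{i}\in J_{i}$ since $B(\mu_{1},\mu_{2})\equiv0$, so the sum $\sum_{\mu\in J}e\left(Q(\mu)\right)$ factors and, as $\left|J\right|=\left|J_{1}\right|\left|J_{2}\right|$, we get $g(J)=g(J_{1})g(J_{2})$. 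On the other side $\gammafak_{p}$ is multiplicative as well, because the $\pexcess$ and $\oddity$ are by definition additive over Jordan components (cf.\ the Remark after Lemma~\ref{lem:def-of-component}). Since the Jordan decomposition is orthogonal and each component $q^{n\epsilon}$ (resp.\ $q^{\epsilon2n}$ in the even $2$-adic case) is itself an orthogonal sum of the indecomposable blocks, it suffices to prove $g(J)=\gammafak_{p}(J)$ for the three types of indecomposable block; I would carry this out in the three forward-referenced Lemmas~\ref{lem:g_q=00003Dgamma_q_pgt2}, \ref{lem:g_q=00003Dgamma_q_odd_2} and~\ref{lem:g_q=00003Dgamma_q_even2}.

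For $p>2$ the block is cyclic, $J\simeq\ZZ_{q}$ with $Q(x\gamma)\equiv ax^{2}/q$ and $(a,p)=1$, so $g(J)=q^{-1/2}\sum_{x\bmod q}e_{q}(ax^{2})$ is a classical quadratic Gauss sum for the odd modulus $q=p^{k}$. I would invoke the standard evaluation $\sum_{x\bmod q}e_{q}(ax^{2})=\left(\frac{a}{q}\right)\varepsilon_{q}\sqrt{q}$, where $\varepsilon_{q}=1$ or $i$ according as $q\equiv1$ or $3\pmod 4$, and then verify the identity $\left(\frac{a}{q}\right)\varepsilon_{q}=\left(\frac{2a}{q}\right)e_{8}(1-q)$ by running through the four residue classes of $q\bmod8$ and inserting the standard value of $\left(\frac{2}{q}\right)$. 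This matches $\gammafak_{p}(q^{\epsilon})=\left(\frac{2a}{q}\right)e_{8}(1-q)$; passing to the rank-$n$ block by taking products then follows from $\left(\frac{2a}{q}\right)=\left(\frac{2a}{p}\right)^{k}=\epsilon^{k}$.

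The harder part is $p=2$. For an odd $2$-adic block $J\simeq\ZZ_{q}$ with $Q(x\gamma)\equiv tx^{2}/(2q)$ and $q=2^{\kappa}$, the relevant sum $S_{\kappa}=\sum_{x\bmod q}e\left(tx^{2}/(2q)\right)$ has the even ``modulus'' $2q$, which makes the evaluation more delicate than in the odd case. I would compute the base cases $\kappa=1,2$ directly, obtaining $S_{1}=1+i^{t}$ and $S_{2}=2e_{8}(t)$, and establish the recursion $S_{\kappa+2}=2S_{\kappa}$ by a dyadic splitting of the summation variable; induction then yields $g(J)=\left(\frac{t}{q}\right)e_{8}(t)=\gammafak_{2}(J)$, with the value depending only on $\kappa\bmod2$.

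For an even $2$-adic block $J\simeq\ZZ_{q}\times\ZZ_{q}$ I would read off $Q$ from the Gram matrices of Lemma~\ref{lem:def-of-component}. In the hyperbolic case $\epsilon=1$ one has $Q(x\gamma+y\delta)\equiv xy/q$, and a single application of orthogonality of additive characters gives $g(J)=1=\left(\frac{1}{q}\right)$. In the case $\epsilon=-1$ one has $Q(x\gamma+y\delta)\equiv(x^{2}+xy+y^{2})/q$, and I would evaluate this binary Gauss sum—again by a recursion in $\kappa$, or by completing the square after clearing the odd determinant $3$—to obtain $g(J)=(-1)^{\kappa}=\left(\frac{3}{q}\right)=\gammafak_{2}(J)$. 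I expect the main obstacle throughout to be precisely these $p=2$ computations: because the modulus is even the clean odd-modulus Gauss-sum formula does not apply, and pinning down the exact eighth root of unity forces careful bookkeeping modulo $8$, so the recursions and base cases must be done by hand.
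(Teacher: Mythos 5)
Your proposal is correct and follows essentially the same route as the paper: reduce to indecomposable Jordan blocks using multiplicativity of both $g$ and $\gammafak_{p}$ over orthogonal sums, then evaluate the three types of indecomposable Gauss sums, which is exactly the content of the paper's Lemmas \ref{lem:g_q=00003Dgamma_q_pgt2}, \ref{lem:g_q=00003Dgamma_q_odd_2} and \ref{lem:g_q=00003Dgamma_q_even2}. The only differences are cosmetic: where you evaluate the $2$-adic sums by dyadic recursions with hand-checked base cases, the paper relates the incomplete sums to complete Gauss sums $G(a,b,c)$ evaluated via Berndt--Evans--Williams, and your eighth root $e_{8}(1-q)$ in the odd-$p$ case is the form consistent with $\gammafak_{p}=e_{8}\big(-\pexcess\big)$.
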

\begin{proof}
Write $J$ as a sum of indecomposables and use Lemmas
\ref{lem:g_q=00003Dgamma_q_pgt2}, \ref{lem:g_q=00003Dgamma_q_odd_2}
and \ref{lem:g_q=00003Dgamma_q_even2}.
\end{proof}
\begin{definition}
For $a,b\in\QQ$ and $c\in\ZZ$ we define $G\left(a,b,c\right)$ as 
\[
G\left(a,b,c\right)=\sum_{n \text{ (mod $c$)}}e_{c}\left(an^{2}+bn\right).
\]
In the notation of Berndt, Evans and Williams \cite{MR1625181} we have $G(a,b,c)=S(2a,2b,c)$.
Observe that all Gauss sums which we need can be expressed in terms of $G(a,b,c)$. 
\end{definition}
\begin{lemma}[{\cite[Thm.\ 1.2.2]{MR1625181}}]
\label{lem:Gauss_recip}Let $a,b,c\in\ZZ$ with $ac\ne0$.
Then \[
G\left(a,b,c\right)=\left|\frac{c}{2a}\right|^{\frac{1}{2}}e_{8}\left(\sign\left(2ac\right)-\frac{2b^{2}}{ac}\right)G\left(-\frac{c}{2},-b,2a\right).\]
\end{lemma}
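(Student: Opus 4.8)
The plan is to recognise the asserted identity as the classical generalised Gauss-sum reciprocity and to derive it from Poisson summation (equivalently, the theta transformation law). First I would rewrite the sum in the half-integral normalisation,
\[
G\left(a,b,c\right)=\sum_{n\text{ (mod }c)}e_{c}\left(an^{2}+bn\right)=\sum_{n\text{ (mod }c)}e^{\pi i\left(2an^{2}+2bn\right)/c},
\]
so that the statement becomes the reciprocity formula
\[
\sum_{n\text{ (mod }C)}e^{\pi i\left(An^{2}+Bn\right)/C}=\left|\tfrac{C}{A}\right|^{1/2}e_{8}\big(\sign\left(AC\right)-\tfrac{B^{2}}{AC}\big)\sum_{n\text{ (mod }A)}e^{-\pi i\left(Cn^{2}+Bn\right)/A}
\]
with parameters $\left(A,B,C\right)=\left(2a,2b,c\right)$. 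Here $A$ is even, so $AC$ is even, which is exactly the situation in which this symmetric form holds. On the right-hand side one checks directly that $\sum_{n\text{ (mod }2a)}e^{-\pi i\left(cn^{2}+2bn\right)/\left(2a\right)}=G\left(-\tfrac{c}{2},-b,2a\right)$, that $\left|C/A\right|^{1/2}=\left|c/\left(2a\right)\right|^{1/2}$, and that $e_{8}\big(\sign\left(AC\right)-B^{2}/\left(AC\right)\big)=e_{8}\big(\sign\left(2ac\right)-2b^{2}/\left(ac\right)\big)$; thus this reciprocity is literally the claim, and the hypothesis $ac\ne0$ guarantees that all the denominators $A$, $C$, $AC$ are nonzero.

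Next I would reduce to the case $A,C>0$. Since $\overline{G\left(a,b,c\right)}=G\left(-a,-b,c\right)$ and replacing $c$ by $-c$ leaves the residue system unchanged while conjugating the summand, a short check that both sides of the displayed reciprocity transform consistently under $A\mapsto-A$, $C\mapsto-C$ (using $\sign\left(AC\right)\mapsto-\sign\left(AC\right)$ and the principal branch of the square root fixed in the notational conventions) reduces everything to positive $A,C$. For $A,C>0$ I would prove the identity by Poisson summation. Starting from the Gaussian transform $\sum_{n}e^{-\pi tn^{2}+2\pi i\beta n}=t^{-1/2}\sum_{m}e^{-\pi\left(m-\beta\right)^{2}/t}$ valid for $\Re t>0$, I would analytically continue in $t$ towards the imaginary value dictated by $A/C$, splitting each integer sum into residue classes (mod $C$ on the left, mod $A$ on the right by periodicity) so that the two finite Gauss sums appear. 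Completing the square $An^{2}+Bn=A\left(n+\tfrac{B}{2A}\right)^{2}-\tfrac{B^{2}}{4A}$ absorbs the linear term and produces exactly the phase $e_{8}\left(-B^{2}/\left(AC\right)\right)$, while the factor $t^{-1/2}$ supplies both $\left|C/A\right|^{1/2}$ and the eight-root of unity $e_{8}\left(\sign\left(AC\right)\right)$.

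The main obstacle is the analytic-continuation step: making rigorous the passage from the absolutely convergent Gaussian theta relation ($\Re t>0$) to the purely oscillatory finite sums on the imaginary axis, and tracking the branch of $t^{-1/2}$ so as to land on the precise eight-root $e_{8}\left(\sign\left(AC\right)\right)$ rather than its negative. This requires approaching the imaginary axis through $\Re t>0$ with a vanishing Gaussian damping (or an Abel-summation argument) and a careful application of the principal-branch convention for $\sqrt{z}$ recorded in the notational conventions; by contrast, the completing-the-square manipulation and the reindexing into residue classes are routine. Since this is precisely \cite[Thm.~1.2.2]{MR1625181}, one may alternatively simply invoke that reference, but the Poisson-summation derivation above is the cleanest self-contained route and is the one I would carry out.
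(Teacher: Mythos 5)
Your proposal is correct, but it takes a genuinely different route from the paper, because the paper offers no proof at all: the bracketed reference in the lemma heading \emph{is} the proof, the statement being Theorem 1.2.2 of Berndt--Evans--Williams \cite{MR1625181} rewritten in the normalization $G(a,b,c)=S(2a,2b,c)$ fixed in the preceding definition. Your bookkeeping for that rewriting is exactly right: with $(A,B,C)=(2a,2b,c)$ one has $B^{2}/(AC)=2b^{2}/(ac)$, $\left|C/A\right|^{1/2}=\left|c/(2a)\right|^{1/2}$ and $\sum_{n\text{ (mod $A$)}}e^{-\pi i(Cn^{2}+Bn)/A}=G\left(-\tfrac{c}{2},-b,2a\right)$, and your reduction of the sign cases (conjugation together with $n\mapsto-n$) works as claimed. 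One small correction: the hypothesis under which the symmetric half-integral form holds is that $AC+B$ be even, which is what makes the summands periodic modulo $C$ and modulo $A$, not merely that $AC$ be even; here $A=2a$ and $B=2b$ are both even, so the condition is automatic. As for what each approach buys: the paper's citation is short, rigorous, and lets the section concentrate on the Gauss-sum evaluations (Corollary \ref{cor:Gauss_even_odd} onward) that actually feed into the main theorem, whereas your Poisson-summation sketch is self-contained in principle but leaves unexecuted precisely the hard part, namely the limiting argument that carries the theta identity from $\Re t>0$ to the imaginary axis with the principal branch of $t^{-1/2}$, which is exactly where the eighth root of unity $e_{8}\big(\sign(2ac)\big)$ is decided. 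If the proof is to stand on its own, that continuation step must be written out in full; as an outline, however, it is the standard and correct Landsberg--Schaar-type argument.
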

\begin{corollary}
\label{cor:Gauss_even_odd}Let $b\in\ZZ$ and suppose that
$c>0$ is even. Then \[
G\left(1,b,c\right)=
\begin{cases}
0 & \text{if }\frac{c}{2}+b \text{ is odd},\\
\sqrt{2c}\, e_{8}\left(1-\frac{2b^{2}}{c}\right) & \text{if }\frac{c}{2}+b\,\,\text{ is even.}
\end{cases}\]
\end{corollary}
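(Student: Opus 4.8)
The plan is to apply the reciprocity formula of Lemma~\ref{lem:Gauss_recip} with $a=1$, which turns the sum $G(1,b,c)$ of length $c$ into a two-term Gauss sum of length $2a=2$ that can be evaluated directly. Since $c>0$ is even, the hypotheses $a,b,c\in\ZZ$ and $ac\neq 0$ of Lemma~\ref{lem:Gauss_recip} are satisfied, and substituting $a=1$ gives
\[
G(1,b,c)=\left|\tfrac{c}{2}\right|^{1/2}e_{8}\!\left(\sign(2c)-\tfrac{2b^{2}}{c}\right)G\!\left(-\tfrac{c}{2},-b,2\right).
\]
First I would simplify the elementary factors: because $c>0$ we have $\left|\tfrac{c}{2}\right|^{1/2}=\sqrt{c/2}$ and $\sign(2c)=1$, so the prefactor is $\sqrt{c/2}\,e_{8}(1-2b^{2}/c)$. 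Note that $c/2\in\ZZ$ precisely because $c$ is even, so all arguments appearing are integers and the expression is well defined.

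The only genuine computation is the short sum $G(-\tfrac{c}{2},-b,2)=\sum_{n\,(\mathrm{mod}\,2)}e_{2}\!\left(-\tfrac{c}{2}n^{2}-bn\right)$. The $n=0$ term contributes $1$, and the $n=1$ term contributes $e_{2}(-\tfrac{c}{2}-b)=(-1)^{c/2+b}$, since $c/2+b$ is an integer. Hence $G(-\tfrac{c}{2},-b,2)=1+(-1)^{c/2+b}$, which equals $0$ when $c/2+b$ is odd and $2$ when $c/2+b$ is even. Combining this with the prefactor yields $G(1,b,c)=0$ in the odd case, and $2\sqrt{c/2}\,e_{8}(1-2b^{2}/c)=\sqrt{2c}\,e_{8}(1-2b^{2}/c)$ in the even case, using $2\sqrt{c/2}=\sqrt{2c}$.

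I do not expect a real obstacle here: the statement is a direct corollary of the reciprocity law, and the main point is simply to invoke it with the correct length parameter $2a=2$ and then to keep careful track of the conventions for the absolute value and the sign, both of which are unambiguous because $c>0$. The parity dichotomy of the conclusion is exactly the vanishing versus non-vanishing of the elementary two-term exponential sum $1+(-1)^{c/2+b}$.
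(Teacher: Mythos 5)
Your proof is correct and follows exactly the paper's own route: apply Lemma~\ref{lem:Gauss_recip} with $a=1$ to reduce $G(1,b,c)$ to the prefactor $\sqrt{c/2}\,e_{8}(1-2b^{2}/c)$ times the two-term sum $G(-\tfrac{c}{2},-b,2)=1+(-1)^{c/2+b}$, and then split by parity. The only (harmless) difference is that you spell out the simplifications $\sign(2c)=1$ and $|c/2|^{1/2}=\sqrt{c/2}$, which the paper leaves implicit.
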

\begin{proof}
By Lemma \ref{lem:Gauss_recip} we know that $G(1,b,c) =\sqrt{\frac{c}{2}}e_{8}\big(1-\frac{2b^{2}}{c}\big)G(-\frac{c}{2},-b,2)$
and 
\[
G\left(-\frac{c}{2},-b,2\right)=\sum_{n=0}^{1}e\left(-\frac{1}{2}\left[\frac{c}{2}n^{2}+nb\right]\right)=1+\left(-1\right)^{\frac{c}{2}+b}.
\]
\end{proof}
\begin{lemma}
\label{lem:Gauss_a_0_c2}Let $a$ and $c$ be integers and suppose that $a$ is odd. Then 
\begin{align*}
G(a,0,2) & =1+\left(-1\right)^{a},\\
G(a,0,c) & =\sqrt{2c}\left(\frac{a}{2c}\right)e_8(a)\quad\text{if }\,c=2^{k}\text{ with }k>1,\\
G(a,0,c) & =\sqrt{c}\left(\frac{a}{c}\right)\varepsilon_{a}\quad\text{if $c>0$ is odd},
\end{align*}
where $\varepsilon_a=1$ if $a\equiv1$ \textup{(mod $4$)} and $\varepsilon_a=i$ otherwise.
\end{lemma}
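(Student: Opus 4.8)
The plan is to treat the three evaluations separately, using the reciprocity formula of Lemma~\ref{lem:Gauss_recip} as the main engine and reducing every sum either to a two-term sum over $\ZZ/2\ZZ$ (as in Corollary~\ref{cor:Gauss_even_odd}) or to an odd-modulus Gauss sum. Throughout I work under $\gcd(a,c)=1$, which is the only case relevant for the non-degenerate Jordan components of Lemma~\ref{lem:def-of-component}; the sign of $a$ is carried along by the factor $e_{8}(\sign(2ac))$ in Lemma~\ref{lem:Gauss_recip} together with the Kronecker conventions fixed in the introduction. The first identity needs no reciprocity at all: $G(a,0,2)=\sum_{n=0}^{1}e_{2}(an^{2})=1+e_{2}(a)=1+(-1)^{a}$.

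For the odd case ($c>0$ odd) I would first pin down the normalised value $G(1,0,c)$. Applying Lemma~\ref{lem:Gauss_recip} with $(a,b)=(1,0)$ gives $G(1,0,c)=|c/2|^{1/2}e_{8}(\sign(2c))\,G(-c/2,0,2)$, and $G(-c/2,0,2)=1+e^{-\pi i c/2}$ is a two-term sum equal to $1-i$ or $1+i$ according as $c\equiv1$ or $3\pmod4$. Multiplying out the principal branches yields $G(1,0,c)=\sqrt c$ for $c\equiv1\pmod4$ and $G(1,0,c)=i\sqrt c$ for $c\equiv3\pmod4$. To pass to a general $a$ coprime to $c$ I would use the character twist $G(a,0,c)=\left(\frac{a}{c}\right)G(1,0,c)$, proved by the Chinese Remainder Theorem, which factors $G(a,0,c)$ over the odd prime-power divisors of $c$, together with the elementary recursion $G(a,0,p^{j})=p\,G(a,0,p^{j-2})$ for $j\ge2$ that reduces each factor to the prime case, where counting square roots produces the Legendre symbol. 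Rewriting the resulting $\sqrt c$ or $i\sqrt c$ into the form recorded in the lemma is then bookkeeping with the supplementary laws for the Kronecker symbol.

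For $c=2^{k}$ with $k>1$ I would again invoke Lemma~\ref{lem:Gauss_recip}, now swapping the modulus from $2^{k}$ to $2a$: since $\sign(2a\cdot 2^{k})=\sign(a)$ this gives $G(a,0,2^{k})=|2^{k-1}/a|^{1/2}e_{8}(\sign(a))\,G(-2^{k-1},0,2a)$. Because $2a=2\cdot a$ with $a$ odd, the Chinese Remainder Theorem splits $G(-2^{k-1},0,2a)$ into a two-term factor modulo $2$, which equals $2$ since $k>1$ forces $2^{k-1}$ to be even, times an odd Gauss sum modulo $a$ that is evaluated by the odd case already treated. Collecting the powers of $2$ produces the prefactor $\sqrt{2c}$, and the leftover eighth roots of unity and Jacobi symbols recombine, via quadratic reciprocity and the supplementary laws $\left(\frac{-1}{a}\right)=(-1)^{(a-1)/2}$ and $\left(\frac{2}{a}\right)=(-1)^{(a^{2}-1)/8}$, into $\left(\frac{a}{2c}\right)e_{8}(a)$.

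The hard part will not be the reductions, which are short, but the final reconciliation of the accumulated factors: the consistent tracking of the eighth roots $e_{8}$, the factor $\varepsilon$, and the Kronecker symbols across the sign of $a$ and the residues of $a$ and $c$ modulo $4$ and $8$. The linchpin here is the identity $\varepsilon_{a}=\left(\frac{-1}{a}\right)\left(\frac{2}{a}\right)e_{8}(a-1)$ for odd $a$, which lets one move the $\varepsilon$-factor between coefficient and modulus and is exactly what converts the output of reciprocity into the stated closed forms; verifying it and the accompanying sign conventions is where the real care is required.
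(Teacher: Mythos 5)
Your route is genuinely different from the paper's. The paper's entire proof of this lemma is a citation: the first identity is called trivial, and the other two are referred to \cite[Prop.~1.5.3 and Thm.~1.5.2]{MR1625181} together with the identity $1+i^{a}=\left(\frac{2}{a}\right)e_{8}(a)$ (which, incidentally, is missing a factor $\sqrt{2}$ as printed). You instead rederive both evaluations from the reciprocity formula of Lemma~\ref{lem:Gauss_recip}, using the Chinese Remainder factorisation $G(m,0,c_{1}c_{2})=G(mc_{2},0,c_{1})\,G(mc_{1},0,c_{2})$, the reduction $G(a,0,p^{j})=p\,G(a,0,p^{j-2})$, and the twist $G(a,0,c)=\left(\frac{a}{c}\right)G(1,0,c)$; this is in substance the internal proof of the cited results, so it makes the lemma self-contained. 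Your standing assumption $\gcd(a,c)=1$ is correct and in fact necessary (otherwise the Kronecker symbol vanishes while the sum does not), and it is the only case the paper ever uses.

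However, the endgame of your odd-modulus case is a genuine gap, not the ``bookkeeping'' you claim. Your computation, which is correct, gives $G(1,0,c)=\sqrt{c}$ for $c\equiv1\pmod 4$ and $G(1,0,c)=i\sqrt{c}$ for $c\equiv3\pmod 4$, hence $G(a,0,c)=\left(\frac{a}{c}\right)\varepsilon_{c}\sqrt{c}$ with the $\varepsilon$-factor governed by $c$ modulo $4$. The statement to be proved has $\varepsilon_{a}$, governed by $a$ modulo $4$. These are different functions of $(a,c)$: already your own intermediate value $G(1,0,c)=i\sqrt{c}$ for $c\equiv 3\pmod 4$ contradicts the printed formula, which gives $\sqrt{c}\left(\frac{1}{c}\right)\varepsilon_{1}=\sqrt{c}$. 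No supplementary-law manipulation, and in particular not your identity $\varepsilon_{a}=\left(\frac{-1}{a}\right)\left(\frac{2}{a}\right)e_{8}(a-1)$, can convert one into the other. What your derivation actually exposes is a typo in the statement: the third formula should read $\varepsilon_{c}$, in agreement with \cite[Thm.~1.5.2]{MR1625181} and with the later use of the lemma --- in the proof of Lemma~\ref{lem:g_q=00003Dgamma_q_pgt2} the value needed is $\left(\frac{a}{q}\right)\varepsilon_{q}=\left(\frac{2a}{q}\right)e_{8}(1-q)=e_{8}\big(-\pexcess(q^{\epsilon})\big)$, which is the one consistent with the displayed formulas for $\gammafak_{p}$ following \eqref{eq:def:gamma_p} (the ``$e_{8}(q-1)$'' printed in that lemma is the companion typo). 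So you have essentially proved the corrected statement; but as a proof of the statement as written, the step you defer is impossible, and you should have flagged the discrepancy rather than asserted it away.

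By contrast, your $2$-power case does close up correctly: for $a>0$ the reciprocity-plus-CRT reduction yields $2^{(k+1)/2}e_{8}(1)\left(\frac{-1}{a}\right)\left(\frac{2^{k}}{a}\right)\varepsilon_{a}$ --- here the $\varepsilon$-subscript is $a$ precisely because $a$ is now the odd modulus --- and your linchpin identity turns this into $\sqrt{2c}\left(\frac{a}{2c}\right)e_{8}(a)$, matching the printed (and correct) formula. Only the case $a<0$, where the reciprocity step produces a negative modulus $2a$ and one must pass to complex conjugates, remains to be written out, as you acknowledge.
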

\begin{proof}
The first equality is trivial, the second and third follows from \cite[Prop.\ 1.5.3 and Thm.~1.5.2]{MR1625181} using
 $1+i^{a}=\left(\frac{2}{a}\right)e_8(a)$.
\end{proof}
\begin{lemma}
\label{lem:g_q=00003Dgamma_q_pgt2}Let $(D,Q)$ be an FQM and $p$ an odd prime. If $q^\epsilon$ is an indecomposable, non-trivial $p$-adic Jordan component, 
which is generated by $\gamma$, then
\[
g\left(q^{\epsilon}\right)=\gammafak_{p}\left(q^{\epsilon}\right)=\left(\frac{2a}{q}\right)e_{8}\left(q-1\right),
\]
where $a$ is the unique integer \textup{(mod $p$)} that satisfies $(\frac{2a}{p})=\epsilon$
and $Q(\gamma)\equiv\frac{a}{q}$ \textup{(mod $1$)}. 
\end{lemma}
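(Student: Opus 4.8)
The plan is to recognize the sum defining $\gammafak_{p}(q^{\epsilon})$ (here $\gammafak$ denotes the local Gauss factor, not the root of unity) as a classical quadratic Gauss sum, to evaluate it with Lemma~\ref{lem:Gauss_a_0_c2}, and then to reconcile the resulting eighth root of unity with the $\pexcess$ used to define $\gammafak_{p}(q^{\epsilon})$ in \eqref{eq:def:gamma_p}. First I would parametrize the component: since $q^{\epsilon}\simeq\ZZ_{q}$ is generated by a single $\gamma$ of order $q=p^{m}$ with $Q(\gamma)\equiv a/q$ \textup{(mod $1$)}, every element is $n\gamma$ with $n$ running mod $q$, and $Q(n\gamma)\equiv n^{2}a/q$. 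Hence
\[
\sqrt{q}\,g(q^{\epsilon})=\sum_{n\ (\mathrm{mod}\ q)}e_{q}(an^{2})=G(a,0,q).
\]
Replacing $\gamma$ by $u\gamma$ sends $a\mapsto u^{2}a$ and leaves both $G(a,0,q)$ and $\left(\frac{2a}{q}\right)$ unchanged, which is a convenient sanity check that the final formula is independent of the chosen generator.

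Next I would note that because $p$ is odd, $q=p^{m}$ is odd, and the defining condition $\left(\frac{2a}{p}\right)=\epsilon\ne0$ forces $p\nmid a$, so $\gcd(a,q)=1$; replacing $a$ by $a+q$ if necessary I may assume $a$ is odd without altering $G(a,0,q)$ or $\left(\frac{a}{q}\right)$ (both are periodic of period $q$ in $a$). I then apply the odd-modulus case of Lemma~\ref{lem:Gauss_a_0_c2}, which gives $G(a,0,q)=\sqrt{q}\left(\frac{a}{q}\right)\varepsilon$, where $\varepsilon$ is the classical quadratic Gauss-sum constant, equal to $1$ if $q\equiv1$ and $i$ if $q\equiv3$ modulo $4$. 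Thus $g(q^{\epsilon})=\left(\frac{a}{q}\right)\varepsilon$, and it remains to identify this with $\gammafak_{p}(q^{\epsilon})=e_{8}\big(-\pexcess(q^{\epsilon})\big)$.

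The final step is the bookkeeping among the three eighth-root-of-unity expressions. Writing $\left(\frac{2a}{q}\right)=\left(\frac{2}{q}\right)\left(\frac{a}{q}\right)$, I would verify the purely arithmetic identity $\varepsilon=\left(\frac{2}{q}\right)e_{8}(1-q)$ by running through the four residue classes of $q$ modulo $8$, using $\left(\frac{2}{q}\right)=(-1)^{(q^{2}-1)/8}$; this yields $g(q^{\epsilon})=\left(\frac{2a}{q}\right)e_{8}(1-q)$. To match $e_{8}\big(-\pexcess(q^{\epsilon})\big)=e_{8}(1-q)\,(-1)^{\kappa}$, where $\pexcess(q^{\epsilon})=(q-1)+4\kappa$ and $\kappa=1$ exactly when $q\ne\square$ and $\epsilon=-1$, I would finally check $\left(\frac{2a}{q}\right)=(-1)^{\kappa}$: by multiplicativity of the Kronecker symbol in the lower entry, $\left(\frac{2a}{q}\right)=\left(\frac{2a}{p}\right)^{m}=\epsilon^{m}$, and since $q=p^{m}$ is a square iff $m$ is even, one checks $\epsilon^{m}=(-1)^{\kappa}$ in each of the three cases $(\epsilon=1)$, $(\epsilon=-1,\ m\text{ even})$, $(\epsilon=-1,\ m\text{ odd})$. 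This gives $g(q^{\epsilon})=\gammafak_{p}(q^{\epsilon})$.

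None of the individual steps is deep, so the main obstacle is precisely this last reconciliation: keeping the conventions for $e_{8}$, the Gauss-sum constant, and the Kronecker symbol mutually consistent is delicate. In particular one must be careful that the Gauss-sum constant depends on $q\bmod4$ (and not on $a$) and that the exponent appearing is $1-q$ rather than $q-1$; these are the points where a sign slip is easiest to make.
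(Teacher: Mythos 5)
Your proof is correct and follows essentially the same route as the paper's: identify $\sqrt{q}\,g(q^{\epsilon})$ with $G(a,0,q)$, evaluate it by the odd-modulus case of Lemma \ref{lem:Gauss_a_0_c2}, rewrite the Gauss-sum constant as $\left(\frac{2}{q}\right)e_{8}(1-q)$ (the paper invokes \eqref{eq:eps_d_formula} here, which is precisely the identity you verify by checking residues modulo $8$), and reconcile $\left(\frac{2a}{q}\right)=\epsilon^{m}$ with the term $4k$ in the $\pexcess$.

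The two ``delicate points'' you flag at the end are both real: they are misprints in the paper which your argument silently corrects. In the odd-modulus case of Lemma \ref{lem:Gauss_a_0_c2} the constant does depend on the modulus, so it should read $\varepsilon_{c}$ rather than $\varepsilon_{a}$; and the correct exponent is $e_{8}(1-q)$, as you derive, in agreement with the definition \eqref{eq:def:gamma_p} and with the displayed formula $\gammafak_{p}(q^{n\epsilon})=\left(\frac{2a}{q}\right)e_{8}\big(n(1-q)\big)$ of Section \ref{sec:On-Jordan-Decompositions}, whereas the $e_{8}(q-1)$ printed in the statement of Lemma \ref{lem:g_q=00003Dgamma_q_pgt2} is a sign error. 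A concrete check: for $q=3$, $a=1$ (so $\epsilon=-1$) one has $g(3^{-1})=\frac{1}{\sqrt{3}}\left(1+2e_{3}(1)\right)=i=\left(\frac{2}{3}\right)e_{8}(-2)$, while $\left(\frac{2}{3}\right)e_{8}(2)=-i$. Your additional step of replacing $a$ by $a+q$ to make $a$ odd, so that Lemma \ref{lem:Gauss_a_0_c2} applies as stated, is likewise a legitimate point that the paper glosses over.
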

\begin{proof}
From Lemma \ref{lem:def-of-component} we see that $q\gamma=0$ and $Q(\gamma)\equiv\frac{a}{q}$ \textup{(mod $1$)}, where $a$ is as described.
Since $e(Q(n\gamma))=e_q(an^{2})$ it follows that 
$\sqrt{q}\,g(q^{\epsilon})=G(a,0,q)$. 
By  Lemma \ref{lem:Gauss_a_0_c2}, using \eqref{eq:eps_d_formula} to rewrite $\varepsilon_a$, we see that $g(q^{\epsilon})=(\frac{2a}{q})e_{8}(q-1)$.
To see that this agrees with $\gammafak_p(q^{\epsilon})$ we observe that $e_8(4k)=(-1)^k$ and it is easy to verify that 
$(\frac{2a}{q})$ is equal to $-1$ precisely if $q$ is not a square and $\epsilon=-1$. 
\end{proof}
\begin{lemma}
\label{lem:g_q=00003Dgamma_q_odd_2}
Let $(D,Q)$ be an FQM. If $q_{t}^{\epsilon}$ is an indecomposable, nontrivial 
odd $2$-adic Jordan component then 
\[
g\left(q_{t}^{\epsilon}\right)=\gammafak_{2}\left(q_{t}^{\epsilon}\right)=\left(\frac{t}{q}\right)e_{8}\left(t\right).
\]
\end{lemma}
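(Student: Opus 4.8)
The plan is to mirror the proof of Lemma~\ref{lem:g_q=00003Dgamma_q_pgt2}: express the local Gauss sum as a standard quadratic Gauss sum of the form $G(a,0,c)$ and then invoke Lemma~\ref{lem:Gauss_a_0_c2}. By Lemma~\ref{lem:def-of-component} the component $q_{t}^{\epsilon}$ is cyclic of order $q=2^{k}$ with $k\ge1$, generated by an element $\gamma$ with $Q(\gamma)\equiv\frac{t}{2q}$ (mod $1$); since the component is indecomposable we have $n=1$, so $t$ is odd. As $Q(n\gamma)\equiv\frac{tn^{2}}{2q}$ (mod $1$), I would first record that
\[
\sqrt{q}\,g(q_{t}^{\epsilon})=\sum_{n \text{ (mod $q$)}}e_{2q}(tn^{2}).
\]

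The one genuine subtlety is that the exponential carries denominator $2q$ while the summation range is only $n$ (mod $q$), so this is not literally one of the sums $G(a,0,c)$. I would resolve this by passing to the full sum over $n$ (mod $2q$). Writing $f(n)=e_{2q}(tn^{2})$, a short computation gives $f(n+q)=f(n)\,e(tn)\,e(tq/2)$; since $t$ is an integer and $q=2^{k}$ with $k\ge1$, both $e(tn)=1$ and $e(tq/2)=e(t2^{k-1})=1$, so $f$ has period $q$. Consequently the sum over $n$ (mod $2q$) is exactly twice the sum over $n$ (mod $q$), giving
\[
G(t,0,2q)=2\sqrt{q}\,g(q_{t}^{\epsilon}).
\]

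Next I would apply Lemma~\ref{lem:Gauss_a_0_c2} with $a=t$ odd and $c=2q=2^{k+1}$, where $k+1\ge2$, to obtain
\[
G(t,0,2q)=\sqrt{4q}\,(\tfrac{t}{4q})\,e_{8}(t)=2\sqrt{q}\,(\tfrac{t}{4q})\,e_{8}(t),
\]
so that $g(q_{t}^{\epsilon})=(\frac{t}{4q})e_{8}(t)$. The final step is the Kronecker-symbol simplification: since $4q=2^{k+2}$ and $q=2^{k}$, complete multiplicativity in the lower entry gives $(\frac{t}{4q})=(\frac{t}{2})^{k+2}$ and $(\frac{t}{q})=(\frac{t}{2})^{k}$, and because $(\frac{t}{2})=\pm1$ we have $(\frac{t}{2})^{2}=1$; hence $(\frac{t}{4q})=(\frac{t}{q})$. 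This yields $g(q_{t}^{\epsilon})=(\frac{t}{q})e_{8}(t)=\gammafak_{2}(q_{t}^{\epsilon})$, as claimed.

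The only step requiring real care is the reduction to $G(t,0,2q)$: one must match the modulus $2q$ in the exponential against the period $q$ of the summand and track the resulting factor of $2$ correctly, which is precisely what distinguishes the odd $2$-adic case from the odd-prime case of Lemma~\ref{lem:g_q=00003Dgamma_q_pgt2}. Everything else is a routine application of Lemma~\ref{lem:Gauss_a_0_c2} together with the elementary multiplicativity of the Kronecker symbol.
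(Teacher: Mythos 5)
Your proposal is correct and follows essentially the same route as the paper: both express $\sqrt{q}\,g(q_{t}^{\epsilon})$ as the incomplete sum $\sum_{n \text{ (mod $q$)}}e_{2q}(tn^{2})$, double it to the complete sum $G(t,0,2q)$ via the periodicity $e_{2q}(t(n+q)^{2})=e_{2q}(tn^{2})$, and evaluate $G(t,0,2q)$ by Lemma \ref{lem:Gauss_a_0_c2}. The only difference is cosmetic: you make explicit the Kronecker-symbol reduction $(\frac{t}{4q})=(\frac{t}{q})$, which the paper absorbs silently into its statement of $G(t,0,2q)=2\sqrt{q}(\frac{t}{q})e_{8}(t)$.
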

\begin{proof}
From Lemma \ref{lem:def-of-component} we know that $q_{t}^{\epsilon}$ is generated by an element $\gamma\in D$
satisfying $q\gamma=0$ and $Q(\gamma)=\frac{t}{2q}$ \textup{(mod $1$)}. Since $e(Q(n\gamma))=e_{2q}(n^{2}t)$
it follows that $\sqrt{q}g\left(q_{t}^{\epsilon}\right)=\sum_{n\text{ (mod $q$)}}e_{2q}(n^{2}t)$. This is an incomplete Gauss sum 
and we express the complete sum $G(t,0,2q)$ in two ways. By Lemma \ref{lem:Gauss_a_0_c2} we see that $G(t,0,2q)=2\sqrt{q}\left(\frac{t}{q}\right)e_{8}(t)$
and a direct computation shows that 
\[
G(t,0,2q)=\sum_{n\text{ (mod $q$)}}e_{2q}(n^{2}t)+\sum_{n\text{ (mod $q$)}}e_{2q}((n+q)^{2}t)=2\sum_{n\text{ (mod $q$)}}e_{2q}(n^{2}t).
\]
Hence $g(q_{t}^{\epsilon})=(\frac{t}{q})e_{8}(t)$, and we see that this agrees with $\gammafak_{2}(q_{t}^{\epsilon})$ in the same manner as in the proof of Lemma \ref{lem:g_q=00003Dgamma_q_pgt2}.
\end{proof}
\begin{lemma}
\label{lem:g_q=00003Dgamma_q_even2}Let $(D,Q)$ be an FQM. If $q^{2\epsilon}$ is an indecomposable, nontrivial 
even $2$-adic Jordan component then 
\[
g\left(q^{2\epsilon}\right)=\gammafak_{2}\left(q^{2\epsilon}\right)=\left(\frac{2-\epsilon}{q}\right).
\]
\end{lemma}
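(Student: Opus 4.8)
The plan is to follow exactly the strategy used in the proofs of Lemmas \ref{lem:g_q=00003Dgamma_q_pgt2} and \ref{lem:g_q=00003Dgamma_q_odd_2}: read off an explicit formula for $Q$ on the generators from Lemma \ref{lem:def-of-component}, write the defining sum \eqref{eq:gfactor} as a two-variable exponential sum, and then reduce it to the one-dimensional Gauss sums $G(a,b,c)$ already evaluated in Corollary \ref{cor:Gauss_even_odd} and Lemma \ref{lem:Gauss_a_0_c2}. The identification with $\gammafak_{2}(q^{2\epsilon})$ at the end is then the same bookkeeping as in the two previous lemmas.

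First I would fix generators $\gamma,\delta$ of $J\simeq\ZZ_{q}\times\ZZ_{q}$ as in Lemma \ref{lem:def-of-component} and record the values of $Q$ on them. Since $J$ is of even ($=$ type II) $2$-adic type, the Gram matrix of $B$ together with the evenness of $Q$ pins $Q$ down completely: taking the even integral lift of the diagonal entries gives $Q(\gamma)=Q(\delta)=0$, $B(\gamma,\delta)=\frac{1}{q}$ when $\epsilon=1$, and $Q(\gamma)=Q(\delta)=\frac{1}{q}$, $B(\gamma,\delta)=\frac{1}{q}$ when $\epsilon=-1$. Expanding $Q(m\gamma+n\delta)=m^{2}Q(\gamma)+n^{2}Q(\delta)+mn\,B(\gamma,\delta)$ then gives $Q(m\gamma+n\delta)\equiv\frac{mn}{q}$ for $\epsilon=1$ and $Q(m\gamma+n\delta)\equiv\frac{m^{2}+mn+n^{2}}{q}$ for $\epsilon=-1$, so that $q\,g(q^{2\epsilon})=\sum_{m,n\,(q)}e_{q}(\cdot)$ in each case.

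The case $\epsilon=1$ is immediate: the inner sum over $n$ of $e_{q}(mn)$ vanishes unless $q\mid m$, leaving the total sum equal to $q$, hence $g(q^{2})=1=\left(\frac{1}{q}\right)$. The case $\epsilon=-1$ is where the real work lies. Here I would carry out the sum over $n$ first, recognizing it as $G(1,m,q)$, and apply Corollary \ref{cor:Gauss_even_odd}; the vanishing condition there forces $m\equiv\frac{q}{2}\pmod 2$, which is the main source of casework (the surviving residues are the even $m$ when $q\ge4$ and $m$ odd when $q=2$). After substituting the explicit value of $G(1,m,q)$ and collecting the resulting eighth roots of unity, the outer sum collapses to a one-dimensional quadratic Gauss sum of the shape $\sum_{m'\,(q/2)}e_{q}(3m'^{2})$, which I would complete to $\tfrac12 G(3,0,q)$ by the same doubling argument as in Lemma \ref{lem:g_q=00003Dgamma_q_odd_2} and then evaluate with Lemma \ref{lem:Gauss_a_0_c2}. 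Simplifying, using $e_{8}(4)=-1$ and $\left(\frac{3}{2^{k+1}}\right)=(-1)^{k+1}$, yields $g(q^{2\cdot(-1)})=(-1)^{k}=\left(\frac{3}{q}\right)=\left(\frac{2-\epsilon}{q}\right)$ for $q=2^{k}$; the initial case $q=2$ I would dispatch directly from the same formula, where the parity condition selects $m=1$ and gives $g=-1$.

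Finally, to identify the answer with $\gammafak_{2}(q^{2\epsilon})$ I would argue exactly as at the end of the previous two lemmas: $e_{8}\!\left(\oddity(q^{2\epsilon})\right)=e_{8}(4k)=(-1)^{k}$, and $\left(\frac{2-\epsilon}{q}\right)$ equals $-1$ precisely when $\epsilon=-1$ and $q$ is not a square, which matches the definition of the oddity of an even $2$-adic component. The main obstacle is the $\epsilon=-1$ computation: keeping track of the interplay between the parity condition from Corollary \ref{cor:Gauss_even_odd} and the residual quadratic Gauss sum, and ensuring that all the $e_{8}$-phases and Kronecker symbols combine to exactly $(-1)^{k}$ rather than some spurious eighth root of unity. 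The case $\epsilon=1$ and the final reconciliation with $\gammafak_{2}$ are routine.
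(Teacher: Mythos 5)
Your proposal is correct and follows essentially the same route as the paper's proof: expand $Q(m\gamma+n\delta)$ from the Gram matrix, sum over one variable to get $G(1,m,q)$, apply Corollary \ref{cor:Gauss_even_odd} so only one parity class of $m$ survives, collapse the remainder to $\sum e_q(3b^2)$, complete it to $\tfrac12 G(3,0,q)$ by the doubling trick, and evaluate via Lemma \ref{lem:Gauss_a_0_c2}. The only cosmetic difference is that the paper computes the $q=2$ case by hand rather than through Corollary \ref{cor:Gauss_even_odd}, which changes nothing of substance.
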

\begin{proof}
From Lemma \ref{lem:def-of-component} we know that $q^{2\epsilon}$ is generated by $\gamma$
and $\delta$ of order $q$, satisfying $B(\gamma,\delta)\equiv\frac{1}{q}$ (mod $1$)
and if $\epsilon=1$ then $Q(\gamma)=Q(\delta)=0$,
while if $\epsilon=-1$ then $Q(\gamma)=Q(\delta)\equiv\frac{1}{q}$ (mod $1$).

Consider first the case $\epsilon=1$. Then $\oddity(q^{2\epsilon})=0$
and $Q(a\gamma+b\delta)=a^2Q(\gamma)+b^2Q(\delta)+ab B(\gamma,\delta)\equiv \frac{ab}{q}$ (mod $1$). Hence
$qg\left(q^{2\epsilon}\right) =\sum_{a,b\text{ (mod $q$)}}e_{q}(ab)=q$.

In the case $\epsilon=-1$ we see that
$Q(a\gamma+b\delta)\equiv \frac{1}{q}(a^2+b^2+ab)$ (mod $1$) and
\[
  qg(q^{2\epsilon})=\sum_{a\text{ (mod $q$)}}e_{q}(a^{2})\sum_{b\text{ (mod $q$)}}e_{q}(b^{2}+ab) 
 =\sum_{a\text{ (mod $q$)}}e_{q}(a^{2})G(1,a,q).
\]
We need to consider the cases $q=2$ and $q\ne2$ separately. If $q=2$ then 
\[
2g(q^{2\epsilon})=G(1,0,2)+e_2(1)G(1,1,2)=\sum_{a\text{ (mod $1$)}}^{}e_{2}(a^{2})-\sum_{a \text{ (mod $1$)}}^{}e_{2}(a^{2}+a)=-2.
\]
If $q>2$ then $G(1,a,q)=0$ unless $a$ is even, and then $G(1,a,q)=\sqrt{2q}e_{8}(1-2a^{2}/q)$.
The sum above can therefore be restricted to the even elements and we get that
\[
qg\left(q^{2\epsilon}\right)=\sqrt{2q}e_{8}\left(1\right)\sum_{a \text{ (mod $q/2$)}}e_q(4b^{2}-b^{2})=\sqrt{2q}e_{8}(1)\sum_{a \text{ (mod $q/2$)}}e_q(3b^{2}).
\]
By Lemma \ref{lem:Gauss_a_0_c2} we  know that $G(3,0,q)=\sqrt{2q}(\frac{3}{q})e_{8}\left(-1\right)$,
and by using the fact that $3(a+q/2)^2\equiv3a$ (mod $q$) we see that $G(3,0,q)=2\sum_{a \text{ (mod $q/2$)}}e_q(3b^{2})$.
Hence $g(q^{2\epsilon})=(\frac{3}{q})$ and we conclude that
 $g(q^{2\epsilon})=(\frac{2-\epsilon}{q})=\gammafak_{2}(q^{2\epsilon})$ in all cases.
\end{proof}

The following lemma is the most important step in Scheithauer's \cite{scheithauer:weil_rep} 
proof of the explicit formula for the Weil representation in the case of even signature.

\begin{lemma}[{\cite[Thm.\ 3.9]{scheithauer:weil_rep}}]
\label{lem:Scheithauer3.9}Let $(D,Q)$ be
an FQM with associated bilinear form $B$. If $c$ is a nonzero integer and $\alpha\in D^{c*}$
then 
\[
\frac{1}{\sqrt{\left|D\right|}}\sum_{\mu\in D}e\big(cQ\left(\mu\right)+B\left(\alpha,\mu\right)\big)
=\sqrt{\left|D_{c}\right|}\,\Scheps{c}\,e\big(-Q_{c}\left(\alpha\right)\big),
\]
and otherwise the left hand side is equal to zero. Here 
\begin{align*}
\Scheps{c} =&  \prod_{2|q\nmid c}\gammafak_{2}\big(\left[q/q_{c}\right]_{*}^{\epsilon_{q}n_{q}}\big)
     e_{8}\big(\left(c/q_{c}-1\right)\oddity\left(\left[q/q_{c}\right]_{*}^{\epsilon_{q}n_{q}}\right)\big)\left(\frac{c/q_{c}}{\left(q/q_{c}\right)^{n_{q}}}\right) \times\\
   &\times\prod_{2<p|q\nmid c}\gammafak_{p}\big(\left[q/q_{c}\right]^{\epsilon_{q}n_{q}}\big)\left(\frac{c/q_{c}}{\left(q/q_{c}\right)^{n_{q}}}\right),
\end{align*}
where $q_{c}=\left(c,q\right)$ and where $*=t_{q}$ if the $2$-adic
component of order $q$ is odd, and $Q_c$ is given by Lemma \ref{lem:Q_c_def}. Note that the parameters $\epsilon_{q},n_{q}$
and $t$ are all the same as for $(D,Q)$. In particular,
if $\left(c,l\right)=1$ then 
\[
\Scheps{c}=e_{8}\big(\sign\left(\fqmQ\right)\big)\left(\frac{c}{\left|D\right|}\right)e_{8}\big(\left(c-1\right)\oddity\left(\fqmQ\right)\big).
\]
\end{lemma}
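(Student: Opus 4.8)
The plan is to reduce the sum to the single case $\alpha = x_c$ by completing the square, and then to factor the resulting sum over the Jordan decomposition and apply the local Gauss-sum evaluations already established. The starting point is the algebraic identity that for $\gamma \in D_c$ and any $\mu \in D$ one has $cB(\mu,\gamma) = B(\mu, c\gamma) = 0$; hence on $D_c$ the map $\gamma \mapsto cQ(\gamma) + B(\alpha, \gamma) = \psi_{c,\gamma}(\alpha)$ is a homomorphism into $\QQ/\ZZ$. Writing $\mu = \mu' + \gamma$ with $\gamma$ running over $D_c$ and $\mu'$ over coset representatives, the cross term $cB(\mu',\gamma)$ vanishes, so the inner sum over $\gamma$ is a character sum on $D_c$. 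It equals $\left|D_c\right|$ when this character is trivial, that is, when $\alpha \in D^{c*}$, and $0$ otherwise; this yields the stated vanishing outside $D^{c*}$.

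For $\alpha \in D^{c*}$, Lemma \ref{lem:x_c_def} gives $\alpha = x_c + cy$ for some $y \in D$. Substituting $\mu = \nu - y$ and using $B(y,y) = 2Q(y)$, the terms linear in $\nu$ collect into $B(\alpha - cy, \nu) = B(x_c, \nu)$, while the constant is $cQ(y) - B(\alpha, y) = -\left(cQ(y) + B(x_c,y)\right) = -Q_c(\alpha)$ by Lemma \ref{lem:Q_c_def}. Since $\nu$ ranges over $D$ as $\mu$ does, this extracts the phase $e\left(-Q_c(\alpha)\right)$ and reduces the claim to
\[
\Sigma := \sum_{\nu \in D} e\left(cQ(\nu) + B(x_c, \nu)\right) = \sqrt{\left|D\right|\left|D_c\right|}\,\Scheps{c}.
\]

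Next I factor $\Sigma = \prod_J \Sigma_J$ over the Jordan decomposition, which is orthogonal for $Q$ and $B$; since $x_c$ is supported only on the odd $2$-adic component of order $q = 2^{\ord_2 c}$, only that factor sees a nonzero $x_c$. Writing $q = p^k$ and $q_c = (c,q)$, I split into $q \mid c$ and $q \nmid c$. If $q \mid c$ and $J$ is not the distinguished odd $2$-adic component, then $cQ \equiv 0$ and $(x_c)_J = 0$, so $\Sigma_J = \left|J\right| = \sqrt{\left|J\right|\left|J_c\right|}$ with trivial phase, consistent with $J$'s absence from the product defining $\Scheps{c}$; for that distinguished component one checks directly that $cQ(\nu)+B(x_c,\nu) \equiv 0$ for every $\nu$, so again $\Sigma_J = \left|J\right|$, which is precisely the role of $x_c$. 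If $q \nmid c$, then $(x_c)_J = 0$ and collapsing $\sum_{\nu \in J} e\left(cQ(\nu)\right)$ by reducing the modulus from $q$ to $q/q_c$ expresses $\Sigma_J$ through a Gauss sum over a component of order $q/q_c$; Lemmas \ref{lem:g_q=00003Dgamma_q_pgt2}, \ref{lem:g_q=00003Dgamma_q_odd_2}, \ref{lem:g_q=00003Dgamma_q_even2} and \ref{lem:Gauss_a_0_c2} then identify it with $\sqrt{\left|J\right|\left|J_c\right|}$ times exactly the local factor $\gammafak_{p}\left([q/q_c]^{\epsilon n}\right)\left(\frac{c/q_c}{(q/q_c)^{n}}\right)$ (with the extra $e_{8}$-twist for $p=2$) of $\Scheps{c}$.

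The step I expect to be the main obstacle is the $2$-adic bookkeeping: tracking the eighth roots of unity through the Gauss-sum collapse, verifying the vanishing of $cQ(\nu) + B(x_c,\nu)$ on the odd component carrying $x_c$, and producing the rescaling phase $e_{8}\left((c/q_c - 1)\oddity(\cdot)\right)$ from the change $Q \mapsto cQ$ on the surviving component. Assembling the local data gives $\prod_J \sqrt{\left|J\right|} = \sqrt{\left|D\right|}$, $\prod_J \sqrt{\left|J_c\right|} = \sqrt{\left|D_c\right|}$ and $\prod_J (\text{local phase}) = \Scheps{c}$, which is the assertion. Finally, the case $(c,l)=1$ follows by specialization: then $q_c = 1$ and $x_c = 0$, so $Q_c(\alpha) = \bar c Q(\alpha)$, the Kronecker factors multiply to $\left(\frac{c}{\left|D\right|}\right)$, the $2$-adic twists combine to $e_{8}\left((c-1)\oddity(\fqmQ)\right)$, and the product of the $\gammafak_{p}$ equals $e_{8}\left(\sign(\fqmQ)\right)$ by Milgram's formula \eqref{eq:milgrams_formula} together with the oddity formula \eqref{eq:oddity_formula}, yielding the stated closed form.
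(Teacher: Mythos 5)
Your proposal is correct and takes essentially the same route as the paper's proof (which defers the details to Scheithauer): factor the sum over the orthogonal Jordan decomposition, use the key fact that $cQ(\nu)+B(x_c,\nu)\equiv 0$ on the distinguished odd $2$-adic component carrying $x_c$ while $x_c$ is orthogonal to all other components, and evaluate the remaining local sums $\sum_{\mu\in J}e\big(cQ(\mu)\big)$ by modulus reduction and the Gauss-sum lemmas of Section \ref{sec:gaussums}. The preliminary steps you make explicit (the character-sum criterion for vanishing off $D^{c*}$ and the completion of the square extracting $e\big(-Q_{c}(\alpha)\big)$) are exactly the parts the paper leaves to the citation, so there is no substantive divergence.
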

\begin{proof}
The proof is identical to that of Scheithauer \cite[Thm.\ 3.9]{scheithauer:weil_rep}
and we chose not to repeat the details here. The key point
is that $x_{c}$ is orthogonal to all components except
the odd $2$-adic component $D\left[q_{t}^{n\epsilon}\right]$ with
$q=2^{k}$ and $2^{k}||c$ (if it exists), where instead $cQ\left(\gamma\right)+B\left(x_{c},\gamma\right)=0$
for all $\gamma\in D\left[q_{t}^{n\epsilon}\right]$. The left hand
side therefore reduces to a product of local sums $\sum_{\mu\in J}e(cQ(\mu))$
over the other (i.e. $q\ne2^{k}$ or $q=2^{k}$ and of even type) Jordan
components, and these sums are then evaluated using the previous lemmas of this section.
\end{proof}

\section{The metaplectic group}
\label{sec:The-Metaplectic-Group}
Weil representations associated to finite quadratic
modules are often not representations of $\SLZ$, but of $\MPZ$, the metaplectic (two-fold) cover of $\SLZ$. 
We therefore repeat the most important facts about $\MPZ$. Additional details is given by e.g.~Gelbart \cite{MR0424695}. For $\mat{M}=\left(\begin{smallmatrix}a & b\\
c & d\end{smallmatrix}\right)\in\SLZ$ and $\tau\in\H$ define $j_{\mat{M}}\left(\tau\right)=c\tau+d$, 
\[
\sigma_{\mat{M}}=
\begin{cases} 
 c & \text{if }\, c\ne 0,\\
 d & \text{otherwise},
\end{cases}
 \quad
c_{\mat{M}}=c,\text{ and } 
d_{\mat{M}}=d.
\]
It is well-known that $\MPZ$ can be realized as the group of pairs 
$
\big(\mat{M},\varphi_{\mat{M}}(\tau)\big),
$
where $\varphi_{\mat{M}}(\tau)$ is a solution to $\varphi_{\mat{M}}(\tau)^{2}=j_{\mat{M}}(\tau)$,
with the following group multiplication:
\begin{equation}
\left(\mat{A},\varphi_{\mat{A}}(\tau)\right)\left(\mat{B},\varphi_{\mat{B}}\left(\tau\right)\right)=\left(\mat{AB},\varphi_{\mat{A}}\left(\mat{B}\tau\right)\varphi_{\mat{B}}
(\tau)\right).\label{eq:Mp2Zmult-1}
\end{equation}
For $\mat{M}\in\SLZ$ we define $\widetilde{\mat{M}}=(\mat{M},\sqrt{j_{\mat{M}}(\tau)})$
to be the canonical choice of representative in the inverse image
of $\mat{M}$ under the covering map. Using the generators
$\mat{S}=\left(\begin{smallmatrix}0 & -1\\ 1 & 0\end{smallmatrix}\right)$ and 
$\mat{T}=\left(\begin{smallmatrix}1 & 1\\ 0 & 1\end{smallmatrix}\right)$
of $\SLZ$ it
is easy to see that $\MPZ$ is generated by 
\[
\widetilde{\mat{T}}=\left(\left(\begin{smallmatrix}1 & 1\\
0 & 1\end{smallmatrix}\right),1\right)\quad\text{and}
\quad\widetilde{\mat{S}}=\left(\left(\begin{smallmatrix}0 & -1\\
1 & 0\end{smallmatrix}\right),\sqrt{\tau}\right),
\]
where the only relations are 
$\widetilde{\mat{S}}^{2}=(\widetilde{\mat{ST}})^{3}
=\widetilde{\mat{Z}}$. Here $\widetilde{\mat{Z}}$ is the generator of the center of $\MPZ$. Explicitly, this element is given by
\[
\widetilde{\mat{Z}}=\left(\left(\begin{smallmatrix}-1 & 0\\
0 & -1\end{smallmatrix}\right),i\right),
\quad\widetilde{\mat{Z}}^{2}=\left(\left(\begin{smallmatrix}1 & 0\\
0 & 1\end{smallmatrix}\right),-1\right)\quad\text{and}\quad\widetilde{\mat{Z}}^{4}=\left(\left(\begin{smallmatrix}1 & 0\\
0 & 1\end{smallmatrix}\right),1\right)=\textup{id}_{\MPZ}.
\]
By Kubota \cite{MR0204422} it is known
that the metaplectic cover of $\SLZ$ is determined by the $2$-cocycle
$\mu:\SLZ^{2}\rightarrow\left\{ \pm1\right\} $ defined by 
\[
\mu\left(\mat{A},\mat{B}\right)=\left(\sigma_{\mat{A}}\sigma_{\mat{AB}},\sigma_{\mat{B}}\sigma_{\mat{AB}}\right)_{\infty}.
\]
However, if we study the metaplectic group from the point of
view of half-integral weight modular forms (cf.~e.g. Shimura \cite{shimura:73:half_integral}),
we are instead naturally led to the $2$-cocycle $\sigma:\SLZ^{2}\rightarrow\left\{ \pm1\right\}$,
defined for $\mat{A},\mat{B}\in\SLZ$ by 
\[
\widetilde{\mat{A}}\widetilde{\mat{B}}=(\mat{AB},\sigma\left(\mat{A},\mat{B}\right)\sqrt{j_{\mat{AB}}\left(\tau\right)}).
\]
More precisely, for any choice of $\tau\in\H$ we have 
\begin{align}
\sigma\left(\mat{A},\mat{B}\right) 
= & j_{\mat{A}}\left(\mat{B}\tau\right)j_{\mat{B}}\left(\tau\right)j_{\mat{AB}}(\tau)^{-1}=e^{\pi iw(\mat{A},\mat{B})},\,\text{where}\label{eq:sigma-def}\\ 
  w(\mat{A},\mat{B})  
=&  \frac{1}{2\pi}\big(\Arg j_{\mat{A}}(\mat{B}\tau)+\Arg j_{\mat{B}}(\tau)-\Arg j_{\mat{AB}}(\tau)\big)\in\left\{ 0,\pm1\right\} .\nonumber 
\end{align}
Let $\Gamma$ be a subgroup of $\SLZ$ and let $\widetilde{\Gamma}\subseteq\MPZ$
denote the inverse image of $\Gamma$ under the covering map. If $\widetilde{\chi}:\widetilde{\Gamma}\rightarrow S^{1}$
is a character of $\widetilde{\Gamma}$ then 
$\widetilde{\chi}\big((\mat{A},-\sqrt{j_{\mat{A}}(\tau)})\big)=\epsilon\widetilde{\chi}(\widetilde{\mat{A}})$
with $\epsilon=\widetilde{\chi}(\widetilde{\mat{Z}}^{2})$, and if we define
$\chi:\Gamma\rightarrow S^{1}$ by $\chi\left(\mat{A}\right)=\widetilde{\chi}(\widetilde{\mat{A}})$
 we say that $\chi$ is \emph{induced} by $\widetilde{\chi}$ and 
\[
\chi\left(\mat{A}\right)\chi\left(\mat{B}\right)=\alpha\left(\mat{A},\mat{B}\right)\chi\left(\mat{A}\mat{B}\right),
\]
where $\alpha\left(\mat{A},\mat{B}\right)=\epsilon$ if $\sigma\left(\mat{A},\mat{B}\right)=-1$
and $1$ otherwise. Since $\widetilde{\mat{Z}}^{4}$ is the identity we
see that $\epsilon=\pm1$, and if $\epsilon=1$ then $\chi$ is a character,
meaning that $\widetilde{\chi}$ factors through $\Gamma$. If $\epsilon=-1$
then $\alpha\left(\mat{A},\mat{B}\right)=\sigma\left(\mat{A},\mat{B}\right)$ and $\chi$
is a projective character, which we can view as a half-integral weight
multiplier system on $\Gamma$. Cf.~e.g.~\cite[Part I]{MR1513171},
\cite[Ch.~3]{maass:modular_functions}, \cite[Ch.\ 3]{rankin:mod}
or \cite[Ch.\ VI, \S 5]{MR2513384}. 

The construction of induced characters outlined here works analogously for representations 
on higher dimensional vector spaces. 
More information on projective
representations in general is given by Mackey \cite{MR0098328}. In
particular, we say that a representation $\widetilde{\rho}:\MPZ \rightarrow\mathbb{C}^{n}$
induces a vector-valued multiplier system of half-integral weight
if there exists a character $\widetilde{\chi}$ of $\MPZ$ such that
$\widetilde{\chi}^{-1}\widetilde{\rho}$ induces an $n$-dimensional
unitary representation of $\SLZ$. Cf.~\cite[Ch.~9]{hejhal:lnm1001}.

Although we know by the theorem of Kubota \cite{MR0204422} that $\sigma$ and $\mu$ must be related
through a trivial cocycle we could not find an explicit formula for it in the literature. 
To clarify this relationship we provide Theorem \ref{thm:sigmaAB} below.
For explicit computations the following expression for $\sigma$ in terms
of Hilbert symbols is useful. 
\begin{lemma}
\label{lem:sigma(A,B)}If $A,\mat{B}\in\SLZ$ and $\mat{C}=\mat{AB}$ then 
\[
\sigma\left(\mat{A},\mat{B}\right)=
\begin{cases}
\left(\sigma_{\mat{C}}\sigma_{\mat{A}},\sigma_{\mat{C}}\sigma_{\mat{B}}\right)_{\infty} & \text{if } c_{\mat{A}}c_{\mat{B}}c_{\mat{C}}\ne0,\\
\left(\sigma_{\mat{A}},\sigma_{\mat{B}}\right)_{\infty} & \text{if }c_{\mat{A}}c_{\mat{B}}\ne0\text{ and } c_{\mat{C}}=0,\,\text{or }c_{\mat{A}}=c_{\mat{B}}=0,\\
\left(-\sigma_{\mat{A}},\sigma_{\mat{B}}\right)_{\infty} & \text{if }c_{\mat{A}}\ne0,\, c_{\mat{B}}=0,\\
\left(-\sigma_{\mat{B}},\sigma_{\mat{A}}\right)_{\infty} & \text{if }c_{\mat{A}}=0,\, c_{\mat{B}}\ne0.
\end{cases}
\]
\end{lemma}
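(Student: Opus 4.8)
The plan is to argue directly from the definition \eqref{eq:sigma-def}. Set $\mat{C}=\mat{AB}$ and write $z_1=j_{\mat{A}}(\mat{B}\tau)$, $z_2=j_{\mat{B}}(\tau)$ and $z_3=j_{\mat{C}}(\tau)$, and recall the chain-rule identity $z_3=z_1z_2$, which is exactly the cocycle property of the automorphy factor. Because of this, $\Arg z_1+\Arg z_2-\Arg z_3$ is an integer multiple of $2\pi$, so $w(\mat{A},\mat{B})=\tfrac{1}{2\pi}(\Arg z_1+\Arg z_2-\Arg z_3)\in\ZZ$; since each argument lies in $(-\pi,\pi]$ we get $w\in\{0,\pm1\}$ and $\sigma(\mat{A},\mat{B})=e^{\pi iw}=\pm1$. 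Thus $\sigma(\mat{A},\mat{B})=1$ precisely when $\Arg z_1+\Arg z_2\in(-\pi,\pi]$ (no wrap-around) and $\sigma(\mat{A},\mat{B})=-1$ otherwise, so the whole problem is to detect this wrap-around.

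The single input that drives everything is the location of each $z_i$ in the plane. Since $\mat{B}\tau\in\H$ for $\tau\in\H$, we have $\Im z_1=c_{\mat{A}}\Im(\mat{B}\tau)$, $\Im z_2=c_{\mat{B}}\Im\tau$ and $\Im z_3=c_{\mat{C}}\Im\tau$. Hence $z_i$ lies in the open upper half-plane, the open lower half-plane, or on the real axis, according as the corresponding lower-left entry $c_{\mat{A}},c_{\mat{B}},c_{\mat{C}}$ is positive, negative, or zero; in the last case $z_i$ equals the corresponding diagonal entry $d$, which is $\pm1$ when the matrix is triangular. This fixes each $\Arg z_i$ to lie in $(0,\pi)$, in $(-\pi,0)$, or in $\{0,\pi\}$, and it is exactly this trichotomy that matches the definition $\sigma_{\mat{M}}=c_{\mat{M}}$ or $d_{\mat{M}}$.

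I would then run the case analysis organized exactly as in the statement, according to which of $c_{\mat{A}},c_{\mat{B}},c_{\mat{C}}$ vanish, using the entry identity $c_{\mat{C}}=c_{\mat{A}}d_{\mat{B}}+d_{\mat{A}}c_{\mat{B}}$ together with the fact that a vanishing $c$ forces triangularity and hence extra relations; for instance $c_{\mat{C}}=0$ makes $\mat{C}$ triangular with diagonal $\pm1$, and via $\mat{B}=\mat{A}^{-1}\mat{C}$ this gives $c_{\mat{B}}=-c_{\mat{A}}$ when $d_{\mat{C}}=1$ and $c_{\mat{B}}=c_{\mat{A}}$ when $d_{\mat{C}}=-1$. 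In each regime the half-plane data from the previous paragraph determines the two nonzero arguments up to sign, the value of $w$ follows by reading off whether $\Arg z_1+\Arg z_2$ crosses $\pi$ or $-\pi$, and one checks this equals the asserted Hilbert symbol: wrap-around happens exactly when the two nonzero factors share an open half-plane while $z_3$ lands in the opposite one, which is precisely the condition that the two Hilbert-symbol entries are simultaneously negative. When $c_{\mat{A}}=c_{\mat{B}}=0$ all three $z_i$ are real and one reads $w$ off from the arguments in $\{0,\pi\}$ directly. I expect the main obstacle to be the borderline situations in which one factor is real, so that an argument sits exactly at $0$ or $\pi$ and the half-plane heuristic is indeterminate: there one must use the exact identity $z_3=z_1z_2$ (e.g.\ $z_2=-1/z_1$ forces $\Arg z_1+\Arg z_2=\pi$) to pin down $w$ unambiguously. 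Assembling the signs in all four cases and confirming they reproduce $(\sigma_{\mat{C}}\sigma_{\mat{A}},\sigma_{\mat{C}}\sigma_{\mat{B}})_{\infty}$, $(\sigma_{\mat{A}},\sigma_{\mat{B}})_{\infty}$, $(-\sigma_{\mat{A}},\sigma_{\mat{B}})_{\infty}$ and $(-\sigma_{\mat{B}},\sigma_{\mat{A}})_{\infty}$ respectively completes the proof; note that the first case recovers Kubota's cocycle $\mu$.
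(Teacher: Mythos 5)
Your proposal is correct, and at its core it is the same strategy as the paper's: reduce $\sigma(\mat{A},\mat{B})$ to the integer $w(\mat{A},\mat{B})\in\{0,\pm1\}$ and determine it by a case analysis on which of $c_{\mat{A}},c_{\mat{B}},c_{\mat{C}}$ vanish and on their signs. The genuine difference is the device used to pin down the three arguments. The paper (following Maa{\ss}) evaluates them in the limit $\tau=iy$, $y\rightarrow\infty$, where each factor degenerates to an explicit value: $\Arg j_{\mat{B}}(iy)\rightarrow\pm\pi/2$ or $\{0,\pi\}$, and $j_{\mat{A}}(\mat{B}(iy))\rightarrow c_{\mat{C}}/c_{\mat{B}}$ approached from the side determined by the sign of $c_{\mat{A}}$, so that $w$ can simply be read off; this implicitly uses the $\tau$-independence of $\sigma$, which is built into the definition. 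You instead work at an arbitrary fixed $\tau$, using only the trichotomy $\sign\big(\Im j_{\mat{M}}\big)=\sign(c_{\mat{M}})$ together with the exact cocycle identity $j_{\mat{AB}}(\tau)=j_{\mat{A}}(\mat{B}\tau)j_{\mat{B}}(\tau)$ to resolve the borderline configurations where a factor is real; your observation that $c_{\mat{C}}=0$ forces $c_{\mat{B}}=\mp c_{\mat{A}}$ according to $d_{\mat{C}}=\pm1$ is exactly what eliminates the indeterminate cases. This buys a proof that is self-contained at a fixed $\tau$ (no limit interchange, no appeal to constancy in $\tau$), at the cost of slightly more wrap-around bookkeeping than the limit computation requires. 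One small imprecision to fix when you write this out: in the case $c_{\mat{A}}c_{\mat{B}}\ne0$, $c_{\mat{C}}=0$ with both $c$'s negative, the identity $z_2=-1/z_1$ gives $\Arg z_1+\Arg z_2=-\pi$ (not $+\pi$), hence $w=-1$; the conclusion $\sigma=-1$ is unchanged, but both signs occur and should be recorded.
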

\begin{proof}
The key ingredient in the proof is to compute the various arguments
by setting $z=iy$ and letting $y\rightarrow\infty$. 
An analogous statement for the function $w(\mat{A},\mat{B})$  was proven by Maa{\ss}~
\cite[p.\ 115]{maass:modular_functions}.
\end{proof}
We can now easily prove the following
theorem relating $\sigma$ and $\mu$. 
\begin{theorem}
\label{thm:sigmaAB}For any $\mat{A},\mat{B}\in\SLZ$ we can express  $\sigma(\mat{A},\mat{B})$ as
\[
\sigma(\mat{A},\mat{B})=\mu(\mat{A},\mat{B})\frac{s(\mat{A})s(\mat{B})}{s(\mat{AB})},
\]
where $s(\mat{A})=1$
if $c_{\mat{A}}\ne0$ and $s(\mat{A})=\sign(d_{A})$
if $c_{\mat{A}}=0$.
\end{theorem}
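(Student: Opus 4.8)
The plan is to read the identity straight off Lemma \ref{lem:sigma(A,B)}, which already expresses $\sigma(\mat{A},\mat{B})$ as a Hilbert symbol at infinity in each of the four cases determined by the vanishing of $c_{\mat{A}}$, $c_{\mat{B}}$ and $c_{\mat{C}}$ (with $\mat{C}=\mat{AB}$). The only tool I would need is that $(a,b)_{\infty}$ depends only on the signs of $a$ and $b$; consequently it is symmetric and bi-multiplicative, and satisfies $(x,x)_{\infty}=\sign(x)$ and $(-1,x)_{\infty}=\sign(x)$ for every $x\neq 0$. I would also record at the outset that whenever $c_{\mat{M}}=0$ one has $a_{\mat{M}}d_{\mat{M}}=1$, so $d_{\mat{M}}=\pm1$ and hence $s(\mat{M})=\sign(d_{\mat{M}})=d_{\mat{M}}$, while $s(\mat{M})=1$ as soon as $c_{\mat{M}}\neq 0$. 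Substituting these values into $\mu(\mat{A},\mat{B})=(\sigma_{\mat{A}}\sigma_{\mat{C}},\sigma_{\mat{B}}\sigma_{\mat{C}})_{\infty}$ and expanding by bi-multiplicativity should turn the claimed identity into an elementary comparison of signs in each case.

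Concretely I would run through the four cases. When $c_{\mat{A}}c_{\mat{B}}c_{\mat{C}}\neq 0$ all three $s$-values equal $1$ and $\sigma_{\mat{A}}=c_{\mat{A}}$, $\sigma_{\mat{B}}=c_{\mat{B}}$, $\sigma_{\mat{C}}=c_{\mat{C}}$, so the formula collapses to the tautology $\sigma(\mat{A},\mat{B})=\mu(\mat{A},\mat{B})$. If exactly one of $c_{\mat{A}},c_{\mat{B}}$ vanishes, say $c_{\mat{B}}=0$ and $c_{\mat{A}}\neq 0$, then $c_{\mat{C}}=c_{\mat{A}}a_{\mat{B}}\neq 0$, while $\sigma_{\mat{B}}=d_{\mat{B}}=a_{\mat{B}}$. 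Expanding $\mu=(c_{\mat{A}}^{2}a_{\mat{B}},\,c_{\mat{A}}a_{\mat{B}}d_{\mat{B}})_{\infty}$ and using $(x,x)_{\infty}=\sign(x)$ should collapse it to $(c_{\mat{A}},d_{\mat{B}})_{\infty}$, so that $\mu\cdot s(\mat{B})=d_{\mat{B}}(c_{\mat{A}},d_{\mat{B}})_{\infty}=(-c_{\mat{A}},d_{\mat{B}})_{\infty}$, which is exactly the value of $\sigma(\mat{A},\mat{B})$ given by Lemma \ref{lem:sigma(A,B)}; the case $c_{\mat{A}}=0$, $c_{\mat{B}}\neq 0$ is identical by symmetry. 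The subcase $c_{\mat{A}}=c_{\mat{B}}=0$ I expect to be immediate: there $d_{\mat{C}}=d_{\mat{A}}d_{\mat{B}}$, the coboundary factor $s(\mat{A})s(\mat{B})/s(\mat{AB})$ equals $1$, and $\mu$ reduces to $(d_{\mat{A}},d_{\mat{B}})_{\infty}=\sigma(\mat{A},\mat{B})$.

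The only case I expect to require real input is $c_{\mat{A}},c_{\mat{B}}\neq 0$ but $c_{\mat{C}}=0$, where Lemma \ref{lem:sigma(A,B)} gives $\sigma(\mat{A},\mat{B})=(c_{\mat{A}},c_{\mat{B}})_{\infty}$. Here $\sigma_{\mat{C}}=d_{\mat{C}}=\pm1$ and $s(\mat{AB})=d_{\mat{C}}$, and expanding $\mu=(c_{\mat{A}}d_{\mat{C}},\,c_{\mat{B}}d_{\mat{C}})_{\infty}$ yields an extra factor $(c_{\mat{A}}c_{\mat{B}},d_{\mat{C}})_{\infty}\,\sign(d_{\mat{C}})$; since $s(\mat{AB})=\sign(d_{\mat{C}})$, matching $\mu/s(\mat{AB})$ against $(c_{\mat{A}},c_{\mat{B}})_{\infty}$ comes down to the single sign relation $(c_{\mat{A}}c_{\mat{B}},d_{\mat{C}})_{\infty}=1$. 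This is the step I expect to be the main obstacle, and the way I would clear it is to exploit $\det\mat{C}=1$ together with $c_{\mat{C}}=0$: a direct computation of the entries of $\mat{C}=\mat{AB}$ gives $a_{\mat{C}}=-c_{\mat{B}}/c_{\mat{A}}$ and $d_{\mat{C}}=-c_{\mat{A}}/c_{\mat{B}}$, and since both are integers with $a_{\mat{C}}d_{\mat{C}}=1$ this forces $c_{\mat{B}}=\pm c_{\mat{A}}$, with $d_{\mat{C}}=1$ in the branch $c_{\mat{B}}=-c_{\mat{A}}$ and $d_{\mat{C}}=-1$ in the branch $c_{\mat{B}}=c_{\mat{A}}$. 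In either branch $c_{\mat{A}}c_{\mat{B}}$ and $d_{\mat{C}}$ cannot both be negative, so $(c_{\mat{A}}c_{\mat{B}},d_{\mat{C}})_{\infty}=1$ as needed, which would complete the proof.
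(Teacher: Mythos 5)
Your proposal is correct and is essentially the paper's own argument: the paper states the theorem immediately after Lemma \ref{lem:sigma(A,B)} with the remark that it can ``now easily'' be proved, meaning exactly the case-by-case comparison of the Hilbert-symbol expressions for $\sigma(\mat{A},\mat{B})$ against $\mu(\mat{A},\mat{B})\,s(\mat{A})s(\mat{B})/s(\mat{AB})$ that you carry out. Your handling of the one nontrivial case ($c_{\mat{A}},c_{\mat{B}}\ne0$, $c_{\mat{AB}}=0$, where $c_{\mat{B}}=\pm c_{\mat{A}}$ forces $(c_{\mat{A}}c_{\mat{B}},d_{\mat{AB}})_{\infty}=1$) is accurate, so the verification is complete.
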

Note that the expression $s(\mat{A})s(\mat{B})s(\mat{AB})^{-1}:\SLZ\rightarrow \{\pm1\}$
is a trivial 2-cocycle.  The above theorem therefore gives a
constructive proof of the fact that the two 2-cocycles $\sigma$ and
$\mu$ are equivalent. 

To prove the explicit formula in our main theorem we only need to
evaluate $\sigma(\mat{A},\mat{B})$ for certain matrices $\mat{A}$ and $\mat{B}$.
A short calculation using Lemma \ref{lem:sigma(A,B)} proves the following two lemmas.
\begin{lemma}
\label{lem:sigma(A,BTm)}Let $\mat{A},\mat{B}\in\SLZ$ and $m\in\ZZ.$
Then we have $\sigma\left(\mat{A},\mat{T}^{m}\right)=\sigma\left(\mat{T}^{m},\mat{A}\right)=1$ 
and
$\sigma\left(\mat{A},\mat{B}\mat{T}^{m}\right)=\sigma\left(\mat{A},\mat{B}\right)=\sigma\left(\mat{T}^{m}\mat{A},\mat{B}\right)$. 
\end{lemma}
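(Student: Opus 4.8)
The plan is to derive all four equalities directly from Lemma~\ref{lem:sigma(A,B)}, which expresses $\sigma$ purely in terms of the lower-left entries $c_{\mat{A}},c_{\mat{B}},c_{\mat{C}}$ (with $\mat{C}=\mat{A}\mat{B}$) and the invariants $\sigma_{\mat{A}},\sigma_{\mat{B}},\sigma_{\mat{C}}$. The whole argument rests on two elementary observations. First, $\mat{T}^{m}=\left(\begin{smallmatrix}1 & m\\0 & 1\end{smallmatrix}\right)$ has $c_{\mat{T}^{m}}=0$ and, since its lower-right entry is $1>0$, $\sigma_{\mat{T}^{m}}=1$. Second, the Hilbert symbol at infinity $\left(x,y\right)_{\infty}$ equals $-1$ only when both arguments are negative, so $\left(x,1\right)_{\infty}=\left(1,x\right)_{\infty}=1$ for every integer $x$.

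For the first pair of identities I would split on whether $c_{\mat{A}}=0$. To compute $\sigma(\mat{A},\mat{T}^{m})$ note $c_{\mat{A}\mat{T}^{m}}=c_{\mat{A}}$, so if $c_{\mat{A}}\ne0$ we are in the third case of Lemma~\ref{lem:sigma(A,B)} and obtain $\left(-\sigma_{\mat{A}},\sigma_{\mat{T}^{m}}\right)_{\infty}=\left(-\sigma_{\mat{A}},1\right)_{\infty}=1$, while if $c_{\mat{A}}=0$ we are in the second case and obtain $\left(\sigma_{\mat{A}},1\right)_{\infty}=1$. The computation of $\sigma(\mat{T}^{m},\mat{A})$ is symmetric: using $c_{\mat{T}^{m}\mat{A}}=c_{\mat{A}}$ one lands in the fourth case (giving $\left(-\sigma_{\mat{A}},1\right)_{\infty}$) when $c_{\mat{A}}\ne0$ and in the second case (giving $\left(1,\sigma_{\mat{A}}\right)_{\infty}$) when $c_{\mat{A}}=0$; both equal $1$.

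The remaining two identities follow from the observation that right-multiplication by $\mat{T}^{m}$ leaves both $c_{\mat{B}}$ and $\sigma_{\mat{B}}$ unchanged: one checks $c_{\mat{B}\mat{T}^{m}}=c_{\mat{B}}$ directly, and then $\sigma_{\mat{B}\mat{T}^{m}}=\sigma_{\mat{B}}$ by reading $\sigma$ off the $c$-entry when $c_{\mat{B}}\ne0$ and off the (also unchanged) $d$-entry when $c_{\mat{B}}=0$. Since $\mat{A}(\mat{B}\mat{T}^{m})=(\mat{A}\mat{B})\mat{T}^{m}$, the same invariance applies to the product matrix $\mat{C}$. Thus the pairs $(\mat{A},\mat{B}\mat{T}^{m})$ and $(\mat{A},\mat{B})$ feed identical data into Lemma~\ref{lem:sigma(A,B)}, forcing $\sigma(\mat{A},\mat{B}\mat{T}^{m})=\sigma(\mat{A},\mat{B})$. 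The identity $\sigma(\mat{T}^{m}\mat{A},\mat{B})=\sigma(\mat{A},\mat{B})$ is obtained identically, now using that left-multiplication by $\mat{T}^{m}$ preserves $c_{\mat{A}},\sigma_{\mat{A}}$ (and, via $(\mat{T}^{m}\mat{A})\mat{B}=\mat{T}^{m}(\mat{A}\mat{B})$, the data of $\mat{C}$).

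I do not expect a genuine obstacle here; the only point demanding care is the case analysis, and in particular verifying $\sigma_{\mat{M}\mat{T}^{m}}=\sigma_{\mat{M}}$ (and the analogous statement for left-multiplication) separately in the $c=0$ and $c\ne0$ regimes, since the definition of $\sigma_{\mat{M}}$ switches between the $c$- and $d$-entries. Once that invariance is in hand, every equality is a one-line specialization of Lemma~\ref{lem:sigma(A,B)}.
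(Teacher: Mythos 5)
Your proof is correct and takes essentially the same approach as the paper, whose proof of this lemma is precisely a short case analysis based on Lemma~\ref{lem:sigma(A,B)}. The key observation you verify --- that multiplication by $\mat{T}^{m}$ on either side leaves $c_{\mat{M}}$ and $\sigma_{\mat{M}}$ (and hence all inputs to Lemma~\ref{lem:sigma(A,B)}) unchanged --- is exactly the intended calculation.
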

\begin{lemma}
\label{lem:sigma(A,S)}Let $\mat{A}=\left(\begin{smallmatrix}a & b\\
c & d\end{smallmatrix}\right)\in\SLZ$. Then
\[
\sigma\left(\mat{A},\mat{S}\right)=\epsilon_{c,d}:=
\begin{cases}
\left(-c,d\right)_{\infty} & \text{if } c\ne0,\\
\left(-1,d\right)_{\infty} & \text{if } c=0,

\end{cases}=
\begin{cases}
-1 & \text{if } c\ge0\,\text{ and } d<0,\\
1 & \text{otherwise}.
\end{cases}
\]
\end{lemma}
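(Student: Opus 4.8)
The plan is to apply Lemma \ref{lem:sigma(A,B)} directly, taking $\mat{B}=\mat{S}$. First I would record the relevant data: the product is $\mat{C}=\mat{A}\mat{S}=\left(\begin{smallmatrix} b & -a\\ d & -c\end{smallmatrix}\right)$, so that $c_{\mat{A}}=c$, $c_{\mat{S}}=1$ and $c_{\mat{C}}=d$. Because $c_{\mat{S}}=1\ne0$ we always have $\sigma_{\mat{S}}=1$, and this immediately discards the two cases of Lemma \ref{lem:sigma(A,B)} that require $c_{\mat{B}}=0$. The argument therefore reduces to three cases depending only on whether $c$ and $d$ vanish: (i) $c\ne0$ and $d\ne0$; (ii) $c\ne0$ and $d=0$; (iii) $c=0$.

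Next I would read off $\sigma(\mat{A},\mat{S})$ in each case. In case (i) we are in the first branch of Lemma \ref{lem:sigma(A,B)}, with $\sigma_{\mat{A}}=c$ and $\sigma_{\mat{C}}=d$, giving $\sigma(\mat{A},\mat{S})=\left(cd,d\right)_{\infty}$. In case (ii) we are in the second branch (here $c_{\mat{C}}=0$), so $\sigma(\mat{A},\mat{S})=\left(\sigma_{\mat{A}},\sigma_{\mat{S}}\right)_{\infty}=\left(c,1\right)_{\infty}=1$. In case (iii) we have $c_{\mat{A}}=0$, $c_{\mat{S}}\ne0$, so the last branch applies, and since $\sigma_{\mat{A}}=d$ when $c=0$, we obtain $\sigma(\mat{A},\mat{S})=\left(-\sigma_{\mat{S}},\sigma_{\mat{A}}\right)_{\infty}=\left(-1,d\right)_{\infty}$.

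It then remains to reconcile these expressions with $\epsilon_{c,d}$ and to collapse everything to the explicit sign. The key identity is $\left(cd,d\right)_{\infty}=\left(-c,d\right)_{\infty}$ for $c,d\ne0$: both equal $-1$ exactly when $d<0$ and $c>0$ (for the left-hand side, $cd<0$ together with $d<0$ forces $c>0$). The degenerate subcase $d=0$ is consistent because $\left(-c,0\right)_{\infty}=1$, matching case (ii), and case (iii) is literally $\left(-1,d\right)_{\infty}$. This yields $\sigma(\mat{A},\mat{S})=\left(-c,d\right)_{\infty}$ for $c\ne0$ and $\left(-1,d\right)_{\infty}$ for $c=0$, i.e.\ $\epsilon_{c,d}$. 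Finally, using the definition of the Hilbert symbol at infinity, for $c\ne0$ we have $\left(-c,d\right)_{\infty}=-1$ iff $c>0$ and $d<0$, while for $c=0$ we have $\left(-1,d\right)_{\infty}=-1$ iff $d<0$; both are subsumed by the single condition ``$c\ge0$ and $d<0$,'' giving the stated piecewise value.

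The computation is short and the only delicate point is the bookkeeping. The \emph{main obstacle} is keeping track of the shuffled entries of $\mat{A}\mat{S}$ (lower-left $d$, lower-right $-c$), so that $\sigma_{\mat{C}}$ is correctly identified as $d$ (or $-c$ when $d=0$), and then verifying at the level of signs that $\left(cd,d\right)_{\infty}=\left(-c,d\right)_{\infty}$ together with the boundary behaviour at $c=0$ and $d=0$, where one must check that the two Hilbert-symbol conventions agree.
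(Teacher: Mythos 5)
Your proposal is correct and follows exactly the route the paper indicates: the paper dispenses with this lemma by remarking that ``a short calculation using Lemma \ref{lem:sigma(A,B)}'' proves it, and your case analysis (with $\mat{C}=\mat{AS}=\left(\begin{smallmatrix} b & -a\\ d & -c\end{smallmatrix}\right)$, hence $c_{\mat{C}}=d$) is precisely that calculation, including the correct identification $(cd,d)_{\infty}=(-c,d)_{\infty}$ and the degenerate cases $d=0$ and $c=0$. Nothing is missing; you have simply supplied the details the paper leaves to the reader.
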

\begin{lemma}
\label{lem:sigma(STmSTn)}Let $m$ and $n$ be integers. Then
\[
\sigma\left(\mat{ST}^{m},\mat{ST}^{n}\right)=
\begin{cases}
-1 & \text{if } m<0,\\
1 & \text{otherwise}.
\end{cases}
\]
\end{lemma}
\begin{proof}
Let $\mat{A}=\mat{ST}^{m}$, $\mat{B}=\mat{ST}^{n}$ and $\mat{C}=\mat{AB}$. Observe that $\sigma_{\mat{A}}=\sigma_{\mat{B}}=1$.
If $m\ne0$ then $\sigma_{\mat{C}}=m$ and $\sigma(\mat{A},\mat{B})=(\sigma_{\mat{C}},\sigma_{\mat{A}}\sigma_{\mat{B}}\sigma_{\mat{C}})_{\infty}=
(m,m)_{\infty}=-1$
if $m<0$ and $=1$ if $m>0$. If $m=0$ then $c_{\mat{C}}=0$, $\sigma_{\mat{C}}=-1$
and $\sigma(\mat{A},\mat{B})=(\sigma_{\mat{A}},\sigma_{\mat{B}})_{\infty}=(1,1)_{\infty}=1$. 
\end{proof}
\begin{lemma}
\label{lem:sigma(xStmSTn)=00003D1}Let $\mat{M}=\left(\begin{smallmatrix}a & b\\
c & d\end{smallmatrix}\right)\in\SLZ$ with $c>0$. If $m$ and $n$ are integers satisfying $cn-d>0$ and $m>0$ then 
\[
\sigma\left(\mat{M}\mat{T}^{-n}\mat{S}\mat{T}^{-m}\mat{S},\mat{S}\mat{T}^{m}\mat{ST}^{n}\right)=1.
\]
\end{lemma}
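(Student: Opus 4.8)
The plan is to reduce the whole statement to a single application of Lemma \ref{lem:sigma(A,B)}, after identifying the bottom rows of the two matrices involved and of their product. Write $\mat{A}=\mat{M}\mat{T}^{-n}\mat{S}\mat{T}^{-m}\mat{S}$ and $\mat{B}=\mat{S}\mat{T}^{m}\mat{S}\mat{T}^{n}$, and set $\mat{C}=\mat{A}\mat{B}$. The three quantities that Lemma \ref{lem:sigma(A,B)} cares about are the lower-left entries $c_{\mat{A}},c_{\mat{B}},c_{\mat{C}}$ together with $\sigma_{\mat{A}},\sigma_{\mat{B}},\sigma_{\mat{C}}$, so the task is to compute these and then track signs through the relevant Hilbert symbols at infinity.

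First I would compute $\mat{C}$. Since $\mat{S}^{2}=-\mat{I}$ is central in $\SLZ$, the inner pair $\mat{S}\cdot\mat{S}$ collapses, and after cancelling $\mat{T}^{-m}\mat{T}^{m}$ and then $\mat{T}^{-n}\mat{T}^{n}$ one finds $\mat{C}=\mat{M}$; in particular $c_{\mat{C}}=c>0$, so $\sigma_{\mat{C}}=c$. Next I would record the bottom rows of $\mat{A}$ and $\mat{B}$. A direct multiplication gives $\mat{B}=\left(\begin{smallmatrix}-1 & -n\\ m & mn-1\end{smallmatrix}\right)$, so $c_{\mat{B}}=m>0$ and $\sigma_{\mat{B}}=m$. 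Writing $\mat{A}=\mat{C}\mat{B}^{-1}=\mat{M}\mat{B}^{-1}$ with $\mat{B}^{-1}=\left(\begin{smallmatrix}mn-1 & n\\ -m & -1\end{smallmatrix}\right)$, the bottom row of $\mat{A}$ comes out as $\big(m(cn-d)-c,\;cn-d\big)$, that is $c_{\mat{A}}=m(cn-d)-c$ and $d_{\mat{A}}=cn-d>0$ by hypothesis.

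Finally I would feed these into Lemma \ref{lem:sigma(A,B)}. Because $c_{\mat{B}}=m>0$ and $c_{\mat{C}}=c>0$ are both nonzero, only $c_{\mat{A}}$ can vanish, which leaves exactly two cases. If $c_{\mat{A}}\ne0$ the first formula applies and $\sigma(\mat{A},\mat{B})=(\sigma_{\mat{C}}\sigma_{\mat{A}},\sigma_{\mat{C}}\sigma_{\mat{B}})_{\infty}=(c\,c_{\mat{A}},\,cm)_{\infty}$, whose second entry $cm$ is positive, so the symbol is $1$. If $c_{\mat{A}}=0$ the last formula applies, $\sigma_{\mat{A}}=d_{\mat{A}}=cn-d$, and $\sigma(\mat{A},\mat{B})=(-\sigma_{\mat{B}},\sigma_{\mat{A}})_{\infty}=(-m,\,cn-d)_{\infty}$, again with positive second entry, hence $1$. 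Thus $\sigma(\mat{A},\mat{B})=1$ in both cases. The only real content is the matrix bookkeeping that produces those three bottom rows; once one observes that the three positivity hypotheses $c>0$, $m>0$, $cn-d>0$ all land in the \emph{second} slot of the relevant Hilbert symbol, the conclusion is immediate, so I do not expect any genuine obstacle beyond this short computation.
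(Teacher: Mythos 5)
Your proposal is correct and follows essentially the same route as the paper: identify $\mat{C}=\mat{A}\mat{B}=\mat{M}$, apply Lemma \ref{lem:sigma(A,B)}, split into the cases $c_{\mat{A}}\ne0$ and $c_{\mat{A}}=0$, and observe that the positivity of $cm$ and of $\sigma_{\mat{A}}=d_{\mat{A}}=cn-d$ forces the relevant Hilbert symbols to equal $1$. In fact your bookkeeping is cleaner than the paper's, which contains typos (it writes $\sigma_{\mat{A}}=c_{\mat{A}}=c>0$ where it means $\sigma_{\mat{C}}=c_{\mat{C}}=c>0$, and repeats $\sigma_{\mat{C}}\sigma_{\mat{A}}$ in the first Hilbert symbol where the second entry should be $\sigma_{\mat{C}}\sigma_{\mat{B}}$); your version is what the paper's argument actually intends.
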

\begin{proof}
Let $\mat{A}=\mat{MT}^{-n}\mat{ST}^{-m}\mat{S}$, $\mat{B}=\mat{ST}^{m}\mat{ST}^{n}$ and $\mat{C}=\mat{AB}=\mat{M}$. 
Then $\sigma_{\mat{B}}=c_{\mat{B}}=m>0,$
$\sigma_{\mat{A}}=c_{\mat{A}}=c>0$ and by Lemma \ref{lem:sigma(A,B)} we see
that 
\[
\sigma\left(\mat{X},\mat{ST}^{m}\mat{ST}^{n}\right)=
\begin{cases}
\left(\sigma_{\mat{C}}\sigma_{\mat{A}},\sigma_{\mat{C}}\sigma_{\mat{A}}\right)_{\infty} & \text{if } c_{\mat{A}}\ne0,\\
\left(-\sigma_{\mat{B}},\sigma_{\mat{A}}\right)_{\infty} & \text{ if } c_{\mat{A}}=0.
\end{cases}
\]
Then $(\sigma_{\mat{C}}\sigma_{\mat{A}},\sigma_{\mat{C}}\sigma_{\mat{B}})_{\infty}=(c\sigma_{\mat{A}},cm)_{\infty}=
\left(\sigma_{\mat{A}},m\right)_{\infty}=1$
since $m>0$, and if $c_{\mat{A}}=0$ then $\left(-\sigma_{\mat{B}},\sigma_{\mat{A}}\right)$
$=\left(-m,cn-d\right)_{\infty}=1$ since $cn-d>0$. 
\end{proof}

\section{The Weil representation}
\label{sec:The-Weil-Representation}
If $\fqmQ$ is an FQM and $\fqmQ=(D,Q)$ then the Weil representation associated to $\fqmQ$
is a unitary finite-dimensional representation of $\MPZ$ on the group
algebra of $D$, $\mathbb{C}\left[D\right]\simeq\mathbb{C}^{\left|D\right|}.$
We use $\vec{e}_{\gamma},$ $\gamma\in D$ to denote the basis
vectors, and $\textup{id}_{\mathbb{C}\left[D\right]}$ the identity
element, of $\mathbb{C}\left[D\right]$. 
For the Weil representations of the type we consider here we provide the most important results, with an emphasis on explicit formulas.   
A more through theoretical background is given by e.g~Skoruppa \cite{skoruppa-weilrep} or Gelbart \cite{MR0424695}. 
\subsection{Definition and properties of the Weil representation}
In the remainder of this section let $\fqmQ$ be a given FQM with $\fqmQ=(D,Q)$ and associated bilinear form $B$. 
Assume in addition, for simplicity, that we have chosen a fixed Jordan decomposition of $\fqmQ$. 
\begin{definition}
\label{def:weil-rep}The Weil representation $\rt:\MPZ\rightarrow\mathbb{C}\left[D\right]$
associated to $\fqmQ$ is defined by the following action of the generators. If $\gamma\in D$ then 
\begin{align*}
\rt\big(\widetilde{\mat{T}}\big)\vec{e}_{\gamma} & =e\big(Q(\gamma)\big)\vec{e}_{\gamma},\\
\rt\big(\widetilde{\mat{S}}\big)\vec{e}_{\gamma} & =\si\frac{1}{\sqrt{\left|D\right|}}\sum_{\delta\in D}e\big(-B(\gamma,\delta)\big)\vec{e}_{\delta}.
\end{align*}
Here $\si$ is an eighth-root of unity, called the \emph{Witt-invariant} of $\fqmQ$, defined by   
\[
\si=\frac{1}{\sqrt{\left|D\right|}}\sum_{\gamma\in D}e\big(-Q(\gamma)\big).
\]
\end{definition}
Note that the Weil representation, as defined by Scheithauer \cite{scheithauer:weil_rep},
is in fact the dual of the one defined above. 
The action of the element 
 $\widetilde{\mat{Z}}=\widetilde{\mat{S}}^{2}$  is 
readily determined using the fact that $\sum_{\delta\in D}e\left(-B\left(\delta,\alpha\right)\right)=0$ unless 
$\alpha=0$, in which case it is equal to $|D|$. 
\begin{lemma}
\label{lem:action_of_Z}If $\gamma\in D$ then the element $\widetilde{\mat{Z}}$ acts as
\[
\rt(\widetilde{\mat{Z}})\vec{e}_{\gamma}=\si^{2}\vec{e}_{-\gamma}.
\]
\end{lemma}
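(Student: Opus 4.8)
The plan is to use directly that $\widetilde{\mat{Z}}=\widetilde{\mat{S}}^{2}$, which follows from the relations $\widetilde{\mat{S}}^{2}=(\widetilde{\mat{ST}})^{3}=\widetilde{\mat{Z}}$ recorded in Section~\ref{sec:The-Metaplectic-Group}. Since $\rt$ is a representation, it suffices to compute $\rt(\widetilde{\mat{S}})^{2}\vec{e}_{\gamma}$ by applying the formula for $\rt(\widetilde{\mat{S}})$ from Definition~\ref{def:weil-rep} twice. First I would expand
\[
\rt(\widetilde{\mat{S}})^{2}\vec{e}_{\gamma}
=\si\frac{1}{\sqrt{|D|}}\sum_{\delta\in D}e\big(-B(\gamma,\delta)\big)\,\rt(\widetilde{\mat{S}})\vec{e}_{\delta}
=\si^{2}\frac{1}{|D|}\sum_{\alpha\in D}\Bigg(\sum_{\delta\in D}e\big(-B(\gamma,\delta)-B(\delta,\alpha)\big)\Bigg)\vec{e}_{\alpha},
\]
collecting the coefficient of each basis vector $\vec{e}_{\alpha}$.

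The key step is then to simplify the inner sum over $\delta$. Using the bilinearity and symmetry of $B$, the phase factors out as $-B(\gamma,\delta)-B(\delta,\alpha)=-B(\delta,\gamma+\alpha)$, so the inner sum becomes $\sum_{\delta\in D}e\big(-B(\delta,\gamma+\alpha)\big)$. At this point I would invoke the orthogonality relation stated just before the lemma: the character sum $\sum_{\delta\in D}e(-B(\delta,\eta))$ vanishes unless $\eta=0$, in which case it equals $|D|$. This is precisely where the non-degeneracy of $B$ enters, since it guarantees that $\delta\mapsto B(\delta,\eta)$ is a non-trivial character of $D$ for every $\eta\neq0$.

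Applying this with $\eta=\gamma+\alpha$ collapses the outer sum to the single term $\alpha=-\gamma$, yielding
\[
\rt(\widetilde{\mat{Z}})\vec{e}_{\gamma}=\rt(\widetilde{\mat{S}})^{2}\vec{e}_{\gamma}
=\si^{2}\frac{1}{|D|}\cdot|D|\cdot\vec{e}_{-\gamma}=\si^{2}\vec{e}_{-\gamma},
\]
as claimed. There is no genuine obstacle here: the argument is a routine double-application of the $\widetilde{\mat{S}}$-formula followed by character orthogonality. The only point requiring a moment of care is the bookkeeping that turns the two phases into $-B(\delta,\gamma+\alpha)$, for which one must use that $B$ is symmetric; everything else is mechanical.
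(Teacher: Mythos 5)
Your proposal is correct and is essentially the paper's own argument: the paper disposes of this lemma in the sentence preceding it, noting that $\widetilde{\mat{Z}}=\widetilde{\mat{S}}^{2}$ acts by applying the $\widetilde{\mat{S}}$-formula twice and using that $\sum_{\delta\in D}e\big(-B(\delta,\alpha)\big)$ vanishes unless $\alpha=0$, where it equals $|D|$. Your write-up just makes the same character-orthogonality computation explicit, including the correct use of symmetry and non-degeneracy of $B$.
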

Since $\widetilde{\mat{Z}}^{4}=1$ the lemma implies that $\si^{8}=1$,
and in particular $\si^{4}=\pm1$. 
\begin{lemma}
\label{lem:The-Witt-invariant}The Witt invariant $\si$
can be expressed by 
\[
\si=e_{8}\big(-\sign(\fqmQ)\big).
\]
\end{lemma}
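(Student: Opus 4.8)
The plan is to connect the Witt invariant $\si=\frac{1}{\sqrt{|D|}}\sum_{\gamma\in D}e(-Q(\gamma))$ directly to the signature via the oddity formula \eqref{eq:oddity_formula}. The key observation is that the defining sum for $\si$ is the complex conjugate of Milgram's sum \eqref{eq:milgrams_formula}: replacing $Q$ by $-Q$ simply conjugates each summand, so $\si = \overline{\frac{1}{\sqrt{|D|}}\sum_{\mu\in D}e(Q(\mu))}$. Thus it suffices to establish that $\frac{1}{\sqrt{|D|}}\sum_{\mu\in D}e(Q(\mu)) = e_8(\sign(\fqmQ))$, since conjugating the right-hand side gives $e_8(-\sign(\fqmQ))$, which is exactly the claimed formula for $\si$.

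First I would reduce the full Gauss sum to a product over Jordan components. By \eqref{eq:Jordan-decomposition} the decomposition $D=\bigoplus_J J$ is orthogonal with respect to $B$, hence $Q(\mu)$ splits additively across components and the sum factorizes: $\frac{1}{\sqrt{|D|}}\sum_{\mu\in D}e(Q(\mu)) = \prod_J g(J)$, where $g(J)$ is the local factor \eqref{eq:gfactor}. Next I would apply Lemma \ref{lem:gauss_sum_eq_gamma_p}, which identifies each local factor as $g(J)=\gammafak_p(J)$, so the product becomes $\prod_J \gammafak_p(J)$. Using the definition \eqref{eq:def:gamma_p}, the odd-prime factors contribute $e_8(-\pexcess(J))$ and the $2$-adic factors contribute $e_8(\oddity(J))$.

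Collecting these, and using that the $\pexcess$ and oddity of $\fqmQ$ are defined by summing over Jordan components, the product telescopes into a single eighth-root of unity:
\[
\prod_J \gammafak_p(J) = e_8\big(\oddity(\fqmQ)\big)\,e_8\Big(-\sum_{p>2}\pexcess(\fqmQ)\Big) = e_8\big(\oddity(\fqmQ)-\textstyle\sum_{p>2}\pexcess(\fqmQ)\big).
\]
By the oddity formula \eqref{eq:oddity_formula} the exponent is congruent to $\sign(\fqmQ)$ modulo $8$, so the product equals $e_8(\sign(\fqmQ))$. This proves \eqref{eq:milgrams_formula} for an arbitrary FQM, and conjugating yields $\si = e_8(-\sign(\fqmQ))$ as required.

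I do not expect a serious obstacle here, since the heavy lifting has already been done in Section \ref{sec:gaussums}: the main content is precisely Lemma \ref{lem:gauss_sum_eq_gamma_p}, whose proof evaluates every indecomposable local Gauss sum. The only point requiring mild care is bookkeeping the sign in the definition of $\si$ versus Milgram's formula—that is, remembering to conjugate and thereby flip the sign of the signature in the exponent—together with verifying that the definition of $\si$ uses $-Q$ rather than $Q$. Once that is tracked correctly, the identity is immediate from the factorization and \eqref{eq:oddity_formula}.
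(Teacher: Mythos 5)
Your proof is correct and follows the same route as the paper: the paper derives this lemma directly from Milgram's formula \eqref{eq:milgrams_formula}, which it in turn establishes for arbitrary FQMs exactly as you do---by factoring the Gauss sum over the orthogonal Jordan components, applying Lemma \ref{lem:gauss_sum_eq_gamma_p}, and invoking the oddity formula \eqref{eq:oddity_formula} as the definition of the signature. Your explicit handling of the conjugation (the $-Q$ in the definition of $\si$ versus the $Q$ in Milgram's sum) is the only detail the paper leaves implicit, and you track it correctly.
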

\begin{proof}
This follows directly from Milgram's formula \eqref{eq:milgrams_formula}. 
\end{proof}
The action of the central element $\widetilde{\mat{Z}}$ is intimately connected to the signature. 
In fact, depending on whether the signature of $\fqmQ$ is even or odd we see that $\si^{4}=1$ or $-1$   
and that $\rt$ either induces a representation or a projective representation of $\SLZ$, respectively.
To treat both cases simultaneously we write the multiplicative relation for the induced (projective) representation $\r$ as
\begin{equation}
\r(\mat{A})\r(\mat{B})=\sigma_{\fqmQ}(\mat{A},\mat{B})\r(\mat{AB}),\label{eq:co-cycle-for-rho}
\end{equation}
where $\sigma_{\fqmQ}\left(\mat{A},\mat{B}\right)$ is defined as 
\[
\sigma_{\fqmQ}(\mat{A},\mat{B})=
\begin{cases}
1 & \text{if } \sign(\fqmQ)\equiv0\text{ (mod $2$)},\\
\sigma(\mat{A},B) & \text{if } \sign(\fqmQ)\equiv1\text{ (mod $2$)},
\end{cases}
\]
with $\sigma(\mat{A},\mat{B})$ defined in \eqref{eq:sigma-def}.
If the signature is odd then we view $\r$ as a vector-valued multiplier
system of half-integral weight on $\SLZ$. In this case, using Lemmas
\ref{lem:sigma(A,B)} and \ref{lem:action_of_Z}, it is an easy exercise to verify
the following lemma.
\begin{lemma}
\label{lem:rho(minusA)}
If $\mat{A}=\left(\begin{smallmatrix}a & b\\ c & d\end{smallmatrix}\right)\in\SLZ$
and the signature of $\fqmQ$ is odd then 
\[
\r\left(-\mat{A}\right)\vec{e}_{\gamma}=\mu\,\r\left(\mat{A}\right)\vec{e}_{-\gamma}\quad \text{for all }\,\gamma \in D,
\]
where
 \[
\mu=\si^{2}\cdot
\begin{cases}
 -\sign(c) & \text{if } c\ne0,\\
  \sign(d) & \text{if } c=0.
\end{cases}
\]
\end{lemma}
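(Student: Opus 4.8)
The plan is to reduce everything to the central element $\mat{Z}=\left(\begin{smallmatrix}-1&0\\0&-1\end{smallmatrix}\right)=-\mathrm{id}$ by writing $-\mat{A}=\mat{A}\mat{Z}$. The point I would stress first is that $\widetilde{\mat{Z}}=(\mat{Z},i)$ is exactly the canonical lift of $\mat{Z}$, since $j_{\mat{Z}}(\tau)=-1$ and $\sqrt{-1}=i$ for the principal branch; hence $\r(\mat{Z})=\rt(\widetilde{\mat{Z}})$ and Lemma \ref{lem:action_of_Z} applies directly, giving $\r(\mat{Z})\vec{e}_{\gamma}=\si^{2}\vec{e}_{-\gamma}$. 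Choosing the order $\mat{A}\mat{Z}$ rather than $\mat{Z}\mat{A}$ is what makes the negation of $\gamma$ appear inside $\r(\mat{A})$, matching the right-hand side of the claim.

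Next I would apply the cocycle relation \eqref{eq:co-cycle-for-rho} to the product $\r(\mat{A})\r(\mat{Z})=\sigma_{\fqmQ}(\mat{A},\mat{Z})\,\r(\mat{A}\mat{Z})=\sigma_{\fqmQ}(\mat{A},\mat{Z})\,\r(-\mat{A})$, so that $\r(-\mat{A})=\sigma_{\fqmQ}(\mat{A},\mat{Z})^{-1}\r(\mat{A})\r(\mat{Z})$. Evaluating on $\vec{e}_{\gamma}$ and pulling the scalar $\si^{2}$ through the linear map $\r(\mat{A})$ gives
\[
\r(-\mat{A})\vec{e}_{\gamma}=\sigma_{\fqmQ}(\mat{A},\mat{Z})^{-1}\,\si^{2}\,\r(\mat{A})\vec{e}_{-\gamma}.
\]
Because the signature is odd, $\sigma_{\fqmQ}(\mat{A},\mat{Z})=\sigma(\mat{A},\mat{Z})\in\{\pm1\}$ equals its own inverse, so $\mu=\si^{2}\,\sigma(\mat{A},\mat{Z})$ and it remains only to compute $\sigma(\mat{A},\mat{Z})$ using Lemma \ref{lem:sigma(A,B)} with $\mat{B}=\mat{Z}$, noting $c_{\mat{Z}}=0$ and $\sigma_{\mat{Z}}=d_{\mat{Z}}=-1$. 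If $c\ne0$ we land in the third case, $\sigma(\mat{A},\mat{Z})=(-\sigma_{\mat{A}},\sigma_{\mat{Z}})_{\infty}=(-c,-1)_{\infty}=-\sign(c)$; if $c=0$ we land in the second case, $\sigma(\mat{A},\mat{Z})=(\sigma_{\mat{A}},\sigma_{\mat{Z}})_{\infty}=(d,-1)_{\infty}=\sign(d)$, where $d=\pm1$. Substituting yields precisely the stated $\mu$.

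The computation is essentially bookkeeping, and the two places where I expect care to matter are the following. First, one must confirm that $\widetilde{\mat{Z}}$ really is the canonical representative, since using the other lift $\widetilde{\mat{Z}}\widetilde{\mat{Z}}^{2}$ would flip the sign of $\mu$ in the odd-signature case. Second, since $\mat{Z}$ has lower-left entry $0$, one must select the branch of Lemma \ref{lem:sigma(A,B)} governed by $c_{\mat{B}}=0$ and not accidentally invoke the generic first case; after that, reading off the Hilbert symbol $(\,\cdot\,,-1)_{\infty}$ (which is $-1$ exactly when its first argument is negative) gives the sign immediately. Everything else is direct evaluation.
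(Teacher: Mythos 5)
Your proof is correct and follows exactly the route the paper intends: the paper dismisses this lemma as ``an easy exercise'' using Lemma \ref{lem:sigma(A,B)} and Lemma \ref{lem:action_of_Z}, which is precisely your decomposition $-\mat{A}=\mat{A}\mat{Z}$ combined with the cocycle relation \eqref{eq:co-cycle-for-rho} and the Hilbert-symbol evaluation of $\sigma(\mat{A},\mat{Z})$. Your two cautionary points (that $\widetilde{\mat{Z}}=(\mat{Z},i)$ is indeed the canonical lift, and that one must use the $c_{\mat{B}}=0$ branches of Lemma \ref{lem:sigma(A,B)}) are exactly the places where care is needed, and both are handled correctly.
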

\begin{definition}
If $d$ is an integer we let $\fqmQ^{d}$ denote the FQM  with the same abelian group  as $\fqmQ$ but with quadratic form scaled by $d$, that is $\fqmQ^{d}=(D,dQ)$. 
\end{definition}
When we restrict the Weil representation $\r$ to certain congruence subgroups 
then the Witt invariant of $\fqmQ^{d}$ show up in the resulting formulas (cf.~Lemma \ref{lem:action_on_gamma_0^0}). We therefore need the following lemma.
\begin{lemma}
\label{sigma_Qd_eps_Qd}If the level of $\fqmQ$ is $l$ and $d$ is an integer relatively prime to $l$ then the Witt invariant
of $\fqmQ^{d}$ is given by 
\[
\sigma_{w}(\fqmQ^{d})=\left(\frac{d}{\left|D\right|}\right)e_{8}\big(-\sign(\fqmQ)+(1-d)\oddity(\fqmQ)\big),
\]
and thus $\eqd{d}$, defined as $\eqd{d}=\sigma_w(\fqmQ^{d}) \si^{-1}$, is given by 
\[
\eqd{d}=\left(\frac{d}{\left|D\right|}\right)e_{8}\big((1-d)\oddity(\fqmQ)\big).
\]
Furthermore, the symbol $\eqd{d}$ satisfies $\eqd{d'}=\eqd{d}$ for
any $d'\equiv d$ \textup{(mod $l$)}. 
\end{lemma}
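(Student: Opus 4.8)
The plan is to express the Witt invariant of $\fqmQ^{d}$ in terms of the signature via Lemma \ref{lem:The-Witt-invariant}, and then track how scaling the quadratic form by $d$ affects each local $p$-adic invariant. By definition $\sigma_w(\fqmQ^{d}) = e_8\big(-\sign(\fqmQ^{d})\big)$, so the entire problem reduces to computing $\sign(\fqmQ^{d})$ through the oddity formula \eqref{eq:oddity_formula}. Since $(d,l)=1$, scaling by $d$ fixes the underlying group $D$ and permutes elements within each Jordan component, so it suffices to understand the effect of $Q \mapsto dQ$ on each component's $\pexcess$ and oddity separately. I would set up the computation component by component, using the explicit descriptions in Lemma \ref{lem:def-of-component}.

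First I would treat the odd primes $p>2$. For an indecomposable component $q^{\epsilon n}$ with $Q(\gamma) \equiv a/q$, scaling replaces $a$ by $da$, so the parameter $a$ governing $\epsilon = (\frac{2a}{p})$ becomes $(\frac{2da}{p}) = (\frac{d}{p})\epsilon$. Using Lemma \ref{lem:def-of-component} and the definition of $\pexcess$, I expect the change in $\pexcess$ to contribute a Kronecker symbol $(\frac{d}{q^n})$ together with a power of $e_8$; summing the symbol contributions over all odd components and multiplying should assemble into $(\frac{d}{|D|})$ restricted to the odd part. For $p=2$, both odd and even type components must be handled: for odd $2$-adic components the relevant parameter is $t$, which under scaling becomes $dt$, and the oddity changes accordingly; the $(1-d)\oddity$ term should emerge precisely here, since $\oddity(q_{dt}^{\dots}) - \oddity(q_t^{\dots})$ relates to $(d-1)t$ modulo $8$. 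The even $2$-adic components have oddity depending only on whether $q$ is a square and on $\epsilon$, and since $(d,2)=1$ forces $d$ odd, the relevant congruences modulo $8$ should produce the remaining contribution to $(\frac{d}{|D|})$ from the $2$-part.

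Having computed $\sign(\fqmQ^d) - \sign(\fqmQ)$ as a combination of the Kronecker symbol $(\frac{d}{|D|})$ and $(d-1)\oddity(\fqmQ)$, I would substitute back into $\sigma_w(\fqmQ^{d}) = e_8\big(-\sign(\fqmQ^{d})\big)$ to obtain the first displayed formula. The expression for $\eqd{d} = \sigma_w(\fqmQ^{d})\si^{-1}$ then follows immediately from Lemma \ref{lem:The-Witt-invariant}, since the $-\sign(\fqmQ)$ terms cancel, leaving only $(\frac{d}{|D|})e_8\big((1-d)\oddity(\fqmQ)\big)$. For the final periodicity claim, I would observe that if $d' \equiv d \pmod{l}$ then $d'Q \equiv dQ \pmod 1$ as forms on $D$ (because $l$ annihilates the values of $Q$), so $\fqmQ^{d'} = \fqmQ^{d}$ as FQMs and hence $\eqd{d'} = \eqd{d}$ trivially.

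The main obstacle I anticipate is the bookkeeping at $p=2$, specifically verifying that the three separate $2$-adic oddity contributions (odd type, even type, and the $x_c$-free scaling of $t$) combine to give exactly the $(1-d)\oddity(\fqmQ)$ term and the correct $2$-part of the Kronecker symbol, with all $e_8$ phases accounted for modulo $8$. The odd-prime part is essentially multiplicativity of the Kronecker symbol and should be routine, but the interplay between the sign $\epsilon$, the square-class of $q$, and the dependence of $\oddity$ on $d t \bmod 8$ at the prime $2$ requires careful case analysis. I would lean on the explicit formulas for $\gammafak_p$ and $\gammafak_2$ displayed after \eqref{eq:def:gamma_p}, rewriting each $\sigma_w(\fqmQ^d)$ factor locally and comparing with $\sigma_w(\fqmQ)$, rather than manipulating the global signature directly.
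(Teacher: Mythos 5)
Your proposal is correct, and it takes a genuinely different route from the paper's. The paper proves this lemma in two lines by specializing Lemma \ref{lem:Scheithauer3.9}: since $(d,l)=1$ forces $D_{d}=\{0\}$, $x_{d}=0$ and $D^{d*}=D$, taking $c=d$ and $\alpha=0$ there gives $\frac{1}{\sqrt{|D|}}\sum_{\mu\in D}e\big(dQ(\mu)\big)=e_{8}\big(\sign(\fqmQ)\big)\left(\frac{d}{|D|}\right)e_{8}\big((d-1)\oddity(\fqmQ)\big)$, and $\sigma_{w}(\fqmQ^{d})$ is the complex conjugate of this sum; the periodicity statement is read off from the definition, exactly as in your last step. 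You instead apply Lemma \ref{lem:The-Witt-invariant} to $\fqmQ^{d}$ (legitimate: Milgram's formula is established in this paper for every FQM, and $(d,l)=1$ makes $dQ$ nondegenerate because $l$ and $|D|$ have the same prime divisors) and then compute $\sign(\fqmQ^{d})$ from the oddity formula \eqref{eq:oddity_formula} by tracking the Jordan invariants under $Q\mapsto dQ$. That computation does close: for $p>2$ the $\pexcess$ of $q^{\epsilon n}$ changes by $4$ precisely when $\left(\frac{d}{q^{n}}\right)=-1$ (the sign flips and $q\ne\square$), which produces the odd part of the Kronecker symbol; for odd $2$-adic components the oddity changes by $(d-1)t$ plus $4$ exactly when $\left(\frac{d}{q^{n}}\right)=-1$, producing both the $2$-part of the symbol and the phase; and the discrepancy between $\sum_{q}t_{q}$ and $\oddity(\fqmQ)$ is a multiple of $4$, which is harmless because $d-1$ is even. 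Your route buys self-containedness: it rests only on the local machinery the paper actually proves (Lemma \ref{lem:def-of-component}, the $\gammafak_{p}$, Milgram's formula) and avoids Lemma \ref{lem:Scheithauer3.9}, whose proof the paper defers to Scheithauer. The paper's route buys brevity, and reuses a lemma that is indispensable for the main theorem anyway.

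One correction to your bookkeeping: the $2$-part of $\left(\frac{d}{|D|}\right)$ does \emph{not} come from the even $2$-adic components, as your sketch suggests, but entirely from the odd-type ones. An even component $q^{2n\epsilon}$ has square order, so $\left(\frac{d}{q^{2n}}\right)=1$, and scaling by odd $d$ leaves it unchanged up to isomorphism (its determinant is multiplied by $d^{2}\equiv1\pmod 8$, so $\epsilon$, and hence the oddity, is preserved); thus even components contribute trivially on both sides of the identity. This is a repairable slip inside the case analysis you yourself flagged as delicate, not a gap in the method.
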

\begin{proof}
Let $D_{d}$, $D^{d}$, $x_{d}$, $D^{d*}$ be as in Section \ref{sec:On-Jordan-Decompositions}.
If $\left(d,l\right)=1$ then $D_{d}=\left\{ 0\right\}$, $D^{d}=D$
and $x_{d}=0$ so that $D^{d*}=D$, and in particular $0\in D^{d*}$.
By Lemma \ref{lem:Scheithauer3.9} we see that
 \[
\frac{1}{\sqrt{\left|D\right|}}\sum_{\alpha\in D}e\big(dQ(\alpha)\big)=
e_{8}\big(\sign\left(\fqmQ\right)\big)\left(\frac{d}{\left|D\right|}\right)e_{8}\big(\left(d-1\right)\oddity\left(\fqmQ\right)\big).
\]
That $\eqd{d}$ only depends on $d$ modulo the level is clear from the definition.
\end{proof}
Using Lemma \ref{sigma_Qd_eps_Qd} and the observation that $\sigma_w(\fqmQ^{-1})=\overline{\sigma_w(\fqmQ^{1})}$
it is easy to show that the oddity and the signature are related through
\begin{align}
e_{4}\big(\oddity(\fqmQ)-\sign(\fqmQ)\big) &= \left(\frac{-1}{\left|D\right|}\right),\quad\text{or equivalently}\nonumber \\
\oddity(\fqmQ) - \sign(\fqmQ) &\equiv \left(\frac{-1}{\left|D\right|}\right)-1\textup{ (mod $4$)}\label{eq:formula_for_-1_over_D}.
\end{align}
If $d$ is an odd integer we define $\varepsilon_{d}\in\left\{ 1,i\right\}$ by
\begin{equation}
\varepsilon_{d}=
\begin{cases}
1 & \text{ if } d\equiv1 \text{ (mod $4$)},\\
i & \text{ if } d\equiv3 \text{ (mod $4$)}.
\end{cases}\label{eq:epsilon_d}
\end{equation}
The symbol $\varepsilon_{d}$ is an essential part of the multiplier
system for the Jacobi theta function, and it is possible to express
it in (at least) two different ways in terms of Kronecker symbols
and exponentials. By verifying this identity for all odd $d$ (mod $8$) we see
 that if $d$ is an odd integer then 
\begin{equation}
\varepsilon_{d}=\left(\frac{2}{d}\right)e_{8}\left(1-d\right)=e_{8}\left(1-\left(\frac{-1}{d}\right)\right).\label{eq:eps_d_formula}
\end{equation}

\begin{lemma}
\label{lem:eps_Q_d_formula}If $\fqmQ$ has level $l$ and $d$ is an integer relatively prime to $l$ then
\[
\eqd{d}=\begin{cases}
\left(\frac{d}{\left|D\right|2^{\sign(\fqmQ)}}\right)\varepsilon_{d}^{\sign(\fqmQ)+\left(\frac{-1}{\left|D\right|}\right)-1} & \text{if } 4|l,\\
\left(\frac{d}{\left|D\right|}\right) & \text{if }4\nmid l.
\end{cases}
\]
Furthermore, if $4|l$ then 
\[
\eqd{-d}=\eqd{d}\,e_{4}\big(\left(d-1\right)\oddity\left(\fqmQ\right)+\sign\left(\fqmQ\right)\big).
\]
\end{lemma}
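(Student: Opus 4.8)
The plan is to start from the closed form $\eqd{d}=\left(\frac{d}{\left|D\right|}\right)e_{8}\big((1-d)\oddity(\fqmQ)\big)$ established in Lemma \ref{sigma_Qd_eps_Qd} and to rewrite only the exponential factor, treating the two divisibility cases separately. First I would dispose of the case $4\nmid l$. Here the level has $2$-part at most $2$, so (since an odd $2$-adic component has level $2q\ge 4$) $\fqmQ$ has no odd $2$-adic Jordan component; the only possible $2$-adic components are even ones with $q=2$, whose oddity is a multiple of $4$. Hence $\oddity(\fqmQ)\equiv 0\pmod 4$. If $l$ is odd then $d$ may be even but $\oddity(\fqmQ)=0$, while if the $2$-part of $l$ equals $2$ then $d$ is odd and $1-d$ is even; in either situation $e_{8}\big((1-d)\oddity(\fqmQ)\big)=1$, giving $\eqd{d}=\left(\frac{d}{\left|D\right|}\right)$.

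For the case $4\mid l$ the integer $d$ is forced to be odd, so $\varepsilon_{d}$ and $\left(\frac{2}{d}\right)$ are available. Using \eqref{eq:eps_d_formula} in the form $e_{8}(1-d)=\left(\frac{2}{d}\right)\varepsilon_{d}$ together with $e_{8}\big((1-d)\oddity(\fqmQ)\big)=\big[e_{8}(1-d)\big]^{\oddity(\fqmQ)}$, I would expand to obtain $\left(\frac{2}{d}\right)^{\oddity(\fqmQ)}\varepsilon_{d}^{\oddity(\fqmQ)}$. The key reduction is \eqref{eq:formula_for_-1_over_D}, which gives $\oddity(\fqmQ)\equiv\sign(\fqmQ)+\left(\frac{-1}{\left|D\right|}\right)-1\pmod 4$; since $\left(\frac{-1}{\left|D\right|}\right)-1$ is even this also yields $\oddity(\fqmQ)\equiv\sign(\fqmQ)\pmod 2$. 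As $\left(\frac{2}{d}\right)=\pm1$ and $\varepsilon_{d}\in\{1,i\}$, the two exponents may be replaced by their residues, producing $\left(\frac{2}{d}\right)^{\sign(\fqmQ)}\varepsilon_{d}^{\sign(\fqmQ)+\left(\frac{-1}{\left|D\right|}\right)-1}$. Finally I would absorb the first factor into the Kronecker symbol: since $d$ is odd, $\left(\frac{2}{d}\right)=\left(\frac{d}{2}\right)$, so $\left(\frac{d}{\left|D\right|}\right)\left(\frac{2}{d}\right)^{\sign(\fqmQ)}=\left(\frac{d}{\left|D\right|}\right)\left(\frac{d}{2^{\sign(\fqmQ)}}\right)=\left(\frac{d}{\left|D\right|2^{\sign(\fqmQ)}}\right)$ by multiplicativity, which is exactly the claimed expression.

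For the relation between $\eqd{-d}$ and $\eqd{d}$ (again with $4\mid l$, so $d$ odd) I would work directly from Lemma \ref{sigma_Qd_eps_Qd} and form the quotient. Multiplicativity of the Kronecker symbol gives $\left(\frac{-d}{\left|D\right|}\right)\big/\left(\frac{d}{\left|D\right|}\right)=\left(\frac{-1}{\left|D\right|}\right)$, while the exponentials combine to $e_{8}\big((1+d)\oddity(\fqmQ)-(1-d)\oddity(\fqmQ)\big)=e_{4}\big(d\,\oddity(\fqmQ)\big)$. Thus $\eqd{-d}=\eqd{d}\left(\frac{-1}{\left|D\right|}\right)e_{4}\big(d\,\oddity(\fqmQ)\big)$, and rewriting $\left(\frac{-1}{\left|D\right|}\right)=e_{4}\big(\sign(\fqmQ)-\oddity(\fqmQ)\big)$ via \eqref{eq:formula_for_-1_over_D} collapses this to $\eqd{d}\,e_{4}\big((d-1)\oddity(\fqmQ)+\sign(\fqmQ)\big)$, as required.

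The computation is essentially bookkeeping once the earlier lemmas are in place; the only genuine points requiring care are the structural observation that $4\nmid l$ forces the oddity to be divisible by $4$ (no odd $2$-adic part), and the consistent use of \eqref{eq:formula_for_-1_over_D} to reduce the oddity exponents modulo $2$ and modulo $4$ so that the ambiguous integer representative of $\sign(\fqmQ)\in\ZZ/8\ZZ$ does not affect the resulting $\pm1$ and $\{1,i\}$ factors. I expect no substantive obstacle beyond keeping these parity reductions straight.
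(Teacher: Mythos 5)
Your proof is correct and follows essentially the same route as the paper's: starting from Lemma \ref{sigma_Qd_eps_Qd}, rewriting the exponential via \eqref{eq:eps_d_formula}, reducing the exponents modulo $2$ and $4$ with \eqref{eq:formula_for_-1_over_D}, and using the absence of odd $2$-adic components when $4\nmid l$. The only difference is that you spell out details the paper compresses (the even-$d$ subcase when $l$ is odd, and the quotient computation for $\eqd{-d}$, which the paper dismisses as ``a simple computation''), and these details are all handled correctly.
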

\begin{proof}
By Lemma \ref{sigma_Qd_eps_Qd} and \eqref{eq:eps_d_formula} we see
that if $d$ is an odd integer then 
\[
\eqd{d}=\left(\frac{d}{\left|D\right|}\right)\left[\left(\frac{2}{d}\right)\varepsilon_{d}\right]^{\oddity\left(\fqmQ\right)}
=\left(\frac{d}{\left|D\right|2^{\sign\left(\fqmQ\right)}}\right)\varepsilon_{d}^{\sign\left(\fqmQ\right)+\left(\frac{-1}{\left|D\right|}\right)-1}.
\]
If $4$ does not divide $l$ we know that the only possible $2$-adic
Jordan components are of even type. Therefore $\oddity\left(\fqmQ\right)\equiv0$ (mod $4$)
and $\eqd{d}=(\frac{d}{\left|D\right|})$. The last expression
follows from a simple computation using \eqref{eq:formula_for_-1_over_D}.
\end{proof}
The formula for $\eqd{d}$ in Lemma \ref{lem:eps_Q_d_formula} should be compared
with the corresponding formula of Borcherds \cite[Thm.\ 5.4]{MR1773561} (note that in his terminology
$\chi_{\theta}=\varepsilon_{d}^{-1}$).  We now use $\eqd{d}$
to prove the following properties of the quadratic residue symbol. 
\begin{corollary}
\label{cor:prop.of.kron}If $\fqmQ$ has level $l$ and $d$ and $k$ are integers
with $d$ relatively prime to $l$ then
\begin{align*}
\left(\frac{d+kl}{\left|D\right|}\right) & =\left(\frac{d}{\left|D\right|}\right)e_{8}\big(kl\,\oddity(\fqmQ)\big),\quad\text{and in particular}\\
\left(\frac{d+4Nk}{2N}\right) & =\left(\frac{d}{2N}\right)\left(-1\right)^{Nk}\quad\text{for all odd integers } N. 
\end{align*}
\end{corollary}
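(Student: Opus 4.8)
The plan is to read both identities straight off the formula for $\eqd{d}$ in Lemma \ref{sigma_Qd_eps_Qd}, exploiting that this symbol depends only on $d$ modulo the level $l$. First I would record the harmless bookkeeping point that makes the formula applicable: since $d$ is coprime to $l$ and every prime dividing $|D|$ also divides $l$ (each such prime carries a Jordan component whose level it divides), we have $\gcd(d,|D|)=1$, so $\left(\frac{d}{|D|}\right)=\pm1$; moreover $\gcd(d+kl,l)=\gcd(d,l)=1$, so Lemma \ref{sigma_Qd_eps_Qd} applies verbatim to both $\eqd{d}$ and $\eqd{d+kl}$.

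For the first identity I would simply equate the two evaluations. Because $d+kl\equiv d\pmod l$, Lemma \ref{sigma_Qd_eps_Qd} gives $\eqd{d+kl}=\eqd{d}$, i.e.
\[
\left(\frac{d+kl}{|D|}\right)e_8\big((1-d-kl)\oddity(\fqmQ)\big)=\left(\frac{d}{|D|}\right)e_8\big((1-d)\oddity(\fqmQ)\big).
\]
Cancelling the common factor $e_8\big((1-d)\oddity(\fqmQ)\big)$ leaves $\left(\frac{d+kl}{|D|}\right)=\left(\frac{d}{|D|}\right)e_8(kl\,\oddity(\fqmQ))$, which is exactly the claim.

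For the ``in particular'' statement I would specialize to the finite quadratic module $\fqmQ_{N}$ of Example \ref{exa:Nx^2}, namely $D\simeq\frac{1}{2N}\ZZ/\ZZ$ with $Q(x)=Nx^2$, for which $|D|=2N$ and $l=4N$ (for $N$ odd, coprimality to $4N$ and to $2N$ coincide, so the hypothesis is precisely $\gcd(d,2N)=1$). The first identity then reads $\left(\frac{d+4Nk}{2N}\right)=\left(\frac{d}{2N}\right)e_8\big(4Nk\,\oddity(\fqmQ_{N})\big)$, and since $e_8(4m)=(-1)^m$ the exponential equals $(-1)^{Nk\,\oddity(\fqmQ_{N})}$.

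The one point that requires an argument, and really the only place any thought is needed, is the parity of $\oddity(\fqmQ_{N})$: I need it to be odd so that $(-1)^{Nk\,\oddity(\fqmQ_{N})}=(-1)^{Nk}$. This I would extract from the oddity formula \eqref{eq:oddity_formula}: every odd-prime component $q^{n\epsilon}$ (with $q$ an odd prime power) has $\pexcess=n(q-1)+4k'$ with $k'\in\{0,1\}$, which is even because $q-1$ is even; hence $\sum_{p>2}\pexcess(\fqmQ_{N})\equiv0\pmod 2$, and the oddity formula forces $\oddity(\fqmQ_{N})\equiv\sign(\fqmQ_{N})\pmod 2$. Since $\sign(\fqmQ_{N})=1$ is odd by Example \ref{exa:Nx^2}, $\oddity(\fqmQ_{N})$ is odd, and the second identity follows.
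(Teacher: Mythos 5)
Your proposal is correct and follows essentially the same route as the paper: the first identity is read off from Lemma \ref{sigma_Qd_eps_Qd} exactly as in the paper's proof, and the second is obtained by specializing to the discriminant form $\fqmQ_N$ of Example \ref{exa:Nx^2} with $|D|=2N$ and $l=4N$. The only (immaterial) difference is that you deduce the oddness of $\oddity(\fqmQ_N)$ from the oddity formula \eqref{eq:oddity_formula} together with $\sign(\fqmQ_N)=1$, whereas the paper simply quotes $\oddity(\fqmQ_N)\equiv N \pmod 4$ from the explicit Jordan decomposition of $\fqmQ_N$.
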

\begin{proof}
By Lemma \ref{sigma_Qd_eps_Qd} we see that if $\left(d,l\right)=1$
and $k\in\ZZ$ then $\eqd{d+kl}=\eqd{d}$ and
\[
e_{8}\big((1-(d+kl))\oddity(\fqmQ)\big)\left(\frac{d+kl}{\left|D\right|}\right)
=\left(\frac{d}{\left|D\right|}\right)e_{8}\big(\left(1-d\right)\oddity\left(\fqmQ\right)\big).
\]
The second statement, which is a special case of the first, corresponds
to the discriminant form $\fqmQ_N$ from Example \ref{exa:Nx^2}. It has level $l=4N$, 
 $\left|D\right|=2N$ and $\oddity(\fqmQ_N)\equiv N$ (mod $4$).
Note that this statement can also be verified directly using the fact
that $\left(\frac{d+4}{2}\right)=-\left(\frac{d}{2}\right)$ for any
integer $d$. 
\end{proof}
To reach the main goal of this paper, that is, to obtain an explicit formula
for the action of the full modular group on the Weil representation,
we extend known formulas for the action of $\Gamma_{0}^{0}\left(l\right)$,
in a first step to $\Gamma_{0}\left(l\right)$, and then in a second
step to the full modular group. This is the same approach as that taken by Scheithauer \cite{scheithauer:weil_rep}.
\begin{definition}
We need three congruence subgroups of level $l$, namely
the \emph{principal} congruence subgroup, $\Gamma(l)$, and the two groups $\Gamma_0^0(l)$ and $\Gamma_0(l)$. 
Here 
\[
\Gamma_{0}\left(l\right)=\left\{ \left(\begin{array}{cc}
a & b\\
c & d\end{array}\right)\in\SLZ\,\Big{|}\, c\equiv0\text{ (mod $l$)}\right\},  
\]
$\Gamma_{0}^{0}(l)\subset \Gamma_0(l)$ is the subgroup consisting of all matrices 
with $b\equiv 0$ (mod $l$) and $\Gamma(l)$ is the group consisting of all matrices in $\SLZ$ which are congruent to the $2\times2$ 
identity matrix mod $l$. 
\end{definition}
\begin{definition}
\label{def:eq}If $\mat{A}=\left(\begin{smallmatrix}a & b\\
c & d\end{smallmatrix}\right)\in\SLZ$ we define $\eq:\SLZ\rightarrow\left\{ \pm1\right\}$ by 
\[
\eq\left(\mat{A}\right)=
\begin{cases}
\left(\frac{c}{d}\right) & \text{if }\sign(\fqmQ)\,\text{ is odd},\\
1 & \text{otherwise,}
\end{cases}
\]
and then define
$\ch:\SLZ\rightarrow\left\{ e_{8}\left(k\right)\,|\, k\in\ZZ/8\ZZ\right\}$ by
\[
\ch\left(\mat{A}\right)=\eq\left(\mat{A}\right)\eqd{d}^{-1}.\]
\end{definition}
\begin{remark}
\label{rem:chi_as_v_theta}Recall that the Jacobi theta function
$\theta(\tau)=\sum_{n\in \ZZ} e(\tau n^2)$ is a weight $\frac{1}{2}$ modular form on $\Gamma_{0}\left(4\right)$ with the theta
multiplier system, given by (cf.~e.g.~\cite[\S 2]{shimura:73:half_integral})
\[
v_{\theta}\left(\mat{A}\right)=\left(\frac{c}{d}\right)\varepsilon_{d}^{-1},\quad \text{ for all  }\mat{A}=\left(\begin{smallmatrix}a & b\\
c & d\end{smallmatrix}\right)\in \Gamma_0(4).
\]
\end{remark}
By Corollary \ref{lem:eps_Q_d_formula} it follows that if $4|l$ and $\mat{A}=\left(\begin{smallmatrix}a & b\\
c & d\end{smallmatrix}\right)\in \Gamma_0(l)$ then 
\begin{align*}
\ch(\mat{A}) & =\left(\frac{d}{\left|D\right|2^{\sign(\fqmQ)}}\right)\left(\frac{c}{d}\right)^{\sign\left(\fqmQ\right)}\varepsilon_{d}^{-\sign\left(\fqmQ\right)+1-\left(\frac{-1}{\left|D\right|}\right)}\\
             & =\left(\frac{d}{\left|D\right|2^{\sign(\fqmQ)}}\right)\,\times
\begin{cases}
v_{\theta}(\mat{A})             & \text{if } \sign(\fqmQ)+(\frac{-1}{\left|D\right|})\equiv2\text{ (mod $4$)},\\
\overline{v_{\theta}}(\mat{A})  & \text{if } \sign(\fqmQ)+(\frac{-1}{\left|D\right|})\equiv0\text{ (mod $4$)},\\
\left(\frac{-1}{d}\right)       & \text{if } \sign(\fqmQ)+(\frac{-1}{\left|D\right|})\equiv3\text{ (mod $4$)},\\
1                               & \text{if } \sign(\fqmQ)+(\frac{-1}{\left|D\right|})\equiv1\text{ (mod $4$)}.
\end{cases}
\end{align*}
If $4\nmid l$ then the signature of $\fqmQ$ is even and it
is clear that $\ch(\mat{A})=(\frac{d}{\left|D\right|})$.
We conclude that if the signature of $\fqmQ$ is even then $\ch$
restricted to $\Gamma_{0}\left(l\right)$ equals a Dirichlet character,
while if the signature is odd then $\ch$ is identical to the half-integral
weight multiplier system given by the theta multiplier, $v_{\theta},$
or its conjugate, $\overline{v}_{\theta}$, times a Dirichlet character.
Observe that $\overline{v}_{\theta}=(\frac{-1}{d})v_\theta$.

The action of the subgroups $\Gamma\left(l\right),$ $\Gamma_{0}^{0}\left(l\right)$
and $\Gamma_{0}\left(l\right)$ on the Weil representation, as given
in Lemma \ref{lem:action_on_gamma_0^0} and \ref{lem:action_on_gamma_0} below, 
was obtained for certain discriminant forms already by Schoeneberg \cite{MR1513241}
(for even signature) and Pfetzer \cite{MR0059945} (for odd signature)
in terms of transformation formulas for theta functions corresponding
to integral lattices. In a more modern language, these results are also given by 
 e.g.~Ebeling \cite[Ch.\ 3.1]{MR1280458} (based
on lectures and notes of Hirzebruch and Skoruppa). Compare also with results by Borcherds \cite[Thm.\ 5.4]{MR1773561}
and Eichler \cite[p.~49]{MR0209258}. For an arbitrary FQM the
following lemma follows from results of Skoruppa \cite{skoruppa-weilrep}. 

\begin{lemma}
\label{lem:action_on_gamma_0^0}If $\fqmQ$ has level $l$ and  
 $\mat{A}=\left(\begin{smallmatrix}a & b\\ c & d\end{smallmatrix}\right)\in \SLZ$ then 
\[
\r\left(\mat{A}\right)\vec{e}_{\gamma}=
\begin{cases}
\eq\left(\mat{A}\right)\vec{e}_{\gamma} & \text{ if } \mat{A}\in\Gamma(l),\\
\ch\left(\mat{A}\right)\vec{e}_{d\gamma} & \text{ if } \mat{A}\in\Gamma_{0}^{0}(l).
\end{cases}
\]
\end{lemma}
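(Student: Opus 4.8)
The plan is to deduce the $\Gamma(l)$ statement from the $\Gamma_0^0(l)$ statement and then to establish the latter by identifying $\r$ restricted to $\Gamma_0^0(l)$ with an explicit monomial map. For the reduction, note $\Gamma(l)\subseteq\Gamma_0^0(l)$; since $lB\equiv0$ and $B$ is nondegenerate, $l$ annihilates $D$, so for $\mat{A}\in\Gamma(l)$ we have $d\equiv1$ (mod $l$) and hence $\vec{e}_{d\gamma}=\vec{e}_{\gamma}$, while Lemma \ref{sigma_Qd_eps_Qd} gives $\eqd{d}=\eqd{1}=1$ and therefore $\ch(\mat{A})=\eq(\mat{A})$. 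Thus the first line is exactly the second restricted to $\Gamma(l)$. I would carry the even- and odd-signature cases in parallel, the even case being Scheithauer's (with $\sigma_{\fqmQ}$ trivial) and the odd case carrying the metaplectic signs.

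For $\Gamma_0^0(l)$, introduce the monomial map $F(\mat{A})=\ch(\mat{A})\,U_{d}$, where $U_{d}$ is the permutation $\vec{e}_{\gamma}\mapsto\vec{e}_{d\gamma}$ (well defined as $d$ enters only mod $l$). The first key step is to show that $F$ obeys the same twisted multiplicativity \eqref{eq:co-cycle-for-rho} as $\r$. Since $d_{\mat{AB}}\equiv d_{\mat{A}}d_{\mat{B}}$ (mod $l$) for $\mat{A},\mat{B}\in\Gamma_0^0(l)$, we have $U_{d_{\mat{A}}}U_{d_{\mat{B}}}=U_{d_{\mat{AB}}}$, so everything reduces to the scalar identity $\ch(\mat{A})\ch(\mat{B})=\sigma_{\fqmQ}(\mat{A},\mat{B})\,\ch(\mat{AB})$. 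Writing $\ch=\eq\,\eqd{d}^{-1}$ and using that $\eqd{d_{\mat{A}}}\eqd{d_{\mat{B}}}=\eqd{d_{\mat{A}}d_{\mat{B}}}\,e_{8}\big((1-d_{\mat{A}})(1-d_{\mat{B}})\oddity(\fqmQ)\big)$, which follows from Lemma \ref{sigma_Qd_eps_Qd}, this becomes a relation among Kronecker symbols, the theta-type cocycle carried by $\eq$, and $\sigma$, and I would verify it with Lemma \ref{lem:sigma(A,B)}, Corollary \ref{cor:prop.of.kron} and \eqref{eq:formula_for_-1_over_D}.

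The second step is to check $\r=F$ on a generating set of $\Gamma_0^0(l)$. On the translations $\mat{T}^{l}$ the action is trivial by Definition \ref{def:weil-rep} (as $lQ(\gamma)\in\ZZ$), matching $\ch(\mat{T}^{l})=1$, $d=1$; the element $-I$ is handled by Lemmas \ref{lem:action_of_Z} and \ref{lem:rho(minusA)}. The remaining generators can be chosen with $c\neq0$ small, so that a short Bruhat-type decomposition reduces $\r(\mat{A})\vec{e}_{\gamma}$ to a single Gauss sum $|D|^{-1/2}\sum_{\mu\in D}e\big(cQ(\mu)+B(\alpha,\mu)\big)$, to which I apply Lemma \ref{lem:Scheithauer3.9}. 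The crucial simplification is that $l\mid c$ makes $c$ annihilate $D$, so $D_{c}=D$, $D^{c}=\{0\}$ and, by Lemma \ref{lem:x_c_def}, $x_{c}=0$ and $D^{c*}=\{0\}$; hence the sum is supported on a single $\alpha$, the output is the single vector $\vec{e}_{d\gamma}$, and the phase $e(-Q_{c}(\alpha))$ is trivial (Lemma \ref{lem:Q_c_def}). This is precisely why the action is monomial, and the accompanying scalar is assembled from $\si$ (Lemma \ref{lem:The-Witt-invariant}), the factor $\Scheps{c}$, and the metaplectic signs from Lemmas \ref{lem:sigma(A,BTm)} and \ref{lem:sigma(A,S)}. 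Once both steps hold, $\r=F$ on all of $\Gamma_0^0(l)$ follows by induction on word length using the shared cocycle relation.

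The main obstacle is the root-of-unity bookkeeping in odd signature, concentrated in the scalar identity $\ch(\mat{A})\ch(\mat{B})=\sigma_{\fqmQ}(\mat{A},\mat{B})\ch(\mat{AB})$ and in identifying the scalar produced by Lemma \ref{lem:Scheithauer3.9} with $\ch(\mat{A})=\eq(\mat{A})\eqd{d}^{-1}$. In even signature these collapse to the statement that $\ch$ is a Dirichlet character and reproduce Scheithauer's formula; in odd signature one must additionally track the half-integral-weight sign (Lemma \ref{lem:sigma(A,S)}) and match the Kronecker symbol $\left(\frac{c}{d}\right)$ against the $\oddity$-dependent eighth roots of unity, where Lemma \ref{lem:eps_Q_d_formula}, Corollary \ref{cor:prop.of.kron} and \eqref{eq:formula_for_-1_over_D} do the real work. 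I expect this matching, rather than any single sum evaluation, to be the delicate point.
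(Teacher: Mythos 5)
First, a point of comparison: the paper itself gives no proof of this lemma. It is imported from Skoruppa \cite{skoruppa-weilrep} and the classical computations of Schoeneberg \cite{MR1513241}, Pfetzer \cite{MR0059945} and Kloosterman \cite{kloosterman} (see also Ebeling \cite{MR1280458}), so your argument has to stand entirely on its own. Several ingredients do: the reduction of the $\Gamma(l)$ case to the $\Gamma_{0}^{0}(l)$ case is correct; the identity $\eqd{d_{1}}\eqd{d_{2}}=\eqd{d_{1}d_{2}}\,e_{8}\big((1-d_{1})(1-d_{2})\oddity(\fqmQ)\big)$ does follow from Lemma \ref{sigma_Qd_eps_Qd}; and your observation that $l\mid c$ forces $D_{c}=D$, $x_{c}=0$, $D^{c*}=\{0\}$, so that Lemma \ref{lem:Scheithauer3.9} yields a monomial operator, is also correct.

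The genuine gap is the step ``check $\r=F$ on a generating set of $\Gamma_{0}^{0}(l)$''. Inside $\Gamma_{0}^{0}(l)$ the matrices your single--Gauss--sum computation actually reaches are, up to sign and powers of $\mat{T}^{l}$, exactly those of the form $\mat{T}^{x}\mat{S}\mat{T}^{c}\mat{S}\mat{T}^{y}$ with $l\mid c$, i.e.\ those with $c\mid a+1$ and $c\mid d+1$ (words with fewer than two letters $\mat{S}$ have $|c|\le 1$, and longer words produce iterated Gauss sums that Lemma \ref{lem:Scheithauer3.9} does not cover). Every such matrix has $d\equiv -1\pmod{l}$, and $-\mat{I}$ flips this to $+1$. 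Since $\mat{A}\mapsto d_{\mat{A}}\bmod l$ is a homomorphism of $\Gamma_{0}^{0}(l)$ onto $(\ZZ/l\ZZ)^{\times}$, the group generated by everything you can verify lies in the proper subgroup $\{\mat{A}\in\Gamma_{0}^{0}(l):d\equiv\pm1\pmod{l}\}$ whenever $\varphi(l)>2$ --- in particular for every $l$ divisible by $8$, hence for all odd-signature cases except $l=4$. Your induction on word length therefore never reaches the cosets $d\not\equiv\pm1$, which is precisely where the content of the lemma sits: the factor $\eqd{d}^{-1}=\si\,\sigma_{w}(\fqmQ^{d})^{-1}$ is never produced by any computation in the sketch. (Those cosets could be reached by three-$\mat{S}$ words such as $\mat{T}^{x}\mat{S}\mat{T}^{d}\mat{S}\mat{T}^{y}\mat{S}$ with $(d,l)=1$, where Lemma \ref{lem:Scheithauer3.9} applies in its $(c,l)=1$ form and the last sum collapses by orthogonality; but this repair still does not save the argument.) The deeper obstruction is $\Gamma(l)$ itself: every element of $\Gamma(l)$ reachable by your verifiable elements lies, up to sign, in the normal closure of $\mat{T}^{l}$ in $\SLZ$, and for $l\ge6$ that normal closure has \emph{infinite} index in $\Gamma(l)$, since the quotient of $\mathrm{PSL}_{2}(\ZZ)$ by it is the infinite $(2,3,l)$ triangle group. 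So no list of ``small'' Bruhat-shaped generators can generate $\Gamma_{0}^{0}(l)$, and establishing the identity $\r(\mat{A})=\eq(\mat{A})\,\mathrm{id}$ on all of $\Gamma(l)$ is exactly the hard classical evaluation of general Gauss-type sums that this lemma encapsulates --- which is why the paper imports it from \cite{skoruppa-weilrep} rather than reproving it.
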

To illustrate the technique which we use to extend the
formulas for the action of $\Gamma_{0}\left(l\right)$ to that of $\SLZ$, we provide full details of the proof of the following lemma.
\begin{lemma}
\label{lem:action_on_gamma_0}
If $\fqmQ$ has level $l$ and $\mat{A}=\left(\begin{smallmatrix}a & b\\
c & d\end{smallmatrix}\right)\in\Gamma_{0}\left(l\right)$ then 
\[
\r\left(\mat{A}\right)\vec{e}_{\alpha}=e\left(bdQ\left(\alpha\right)\right)\ch\left(\mat{A}\right)\vec{e}_{d\alpha}.
\]
\end{lemma}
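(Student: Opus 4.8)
The plan is to reduce the general element of $\Gamma_{0}(l)$ to one of $\Gamma_{0}^{0}(l)$, whose action is already given by Lemma \ref{lem:action_on_gamma_0^0}, by factoring out a suitable power of $\mat{T}$. Given $\mat{A}=\left(\begin{smallmatrix}a & b\\ c & d\end{smallmatrix}\right)\in\Gamma_{0}(l)$, the relation $ad-bc=1$ together with $l\mid c$ shows $ad\equiv1\pmod l$, so $a$ is invertible modulo $l$ and I may choose $n\in\ZZ$ with $an\equiv b\pmod l$. Then $\mat{A}_{0}:=\mat{A}\mat{T}^{-n}=\left(\begin{smallmatrix}a & b-an\\ c & d-cn\end{smallmatrix}\right)$ has upper-right entry $\equiv0\pmod l$ and lower-left entry $c\equiv0\pmod l$, hence $\mat{A}_{0}\in\Gamma_{0}^{0}(l)$, and $\mat{A}=\mat{A}_{0}\mat{T}^{n}$.

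Next I would compute $\r(\mat{A})$ using the cocycle relation \eqref{eq:co-cycle-for-rho}. By Lemma \ref{lem:sigma(A,BTm)} we have $\sigma(\mat{A}_{0},\mat{T}^{n})=1$, so $\sigma_{\fqmQ}(\mat{A}_{0},\mat{T}^{n})=1$ in both the even and the odd signature cases, giving $\r(\mat{A})=\r(\mat{A}_{0})\r(\mat{T}^{n})$. The same lemma shows $\sigma_{\fqmQ}$ is trivial on every pair of powers of $\mat{T}$, so $\r(\mat{T}^{n})=\r(\mat{T})^{n}$ acts diagonally by $\r(\mat{T}^{n})\vec{e}_{\alpha}=e(nQ(\alpha))\vec{e}_{\alpha}$, directly from Definition \ref{def:weil-rep}. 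Applying Lemma \ref{lem:action_on_gamma_0^0} to $\mat{A}_{0}$ then yields $\r(\mat{A}_{0})\vec{e}_{\alpha}=\ch(\mat{A}_{0})\vec{e}_{(d-cn)\alpha}$. Here I would use that the level annihilates $D$: since $lB\equiv0\pmod1$ and $x\mapsto B(x,\cdot)$ is an isomorphism onto $\mathrm{Hom}(D,\QQ/\ZZ)$, every element of $D$ has order dividing $l$, so $l\mid c$ forces $c\alpha=0$ and hence $(d-cn)\alpha=d\alpha$. Combining, $\r(\mat{A})\vec{e}_{\alpha}=e(nQ(\alpha))\,\ch(\mat{A}_{0})\,\vec{e}_{d\alpha}$.

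It then remains to match the scalar factors, and this bookkeeping is where I expect the only real subtlety to lie. For the exponential, $an\equiv b\pmod l$ and $ad\equiv1\pmod l$ give $bd\equiv adn\equiv n\pmod l$; since $Q(\alpha)\in\tfrac1l\ZZ/\ZZ$ this yields $e(nQ(\alpha))=e(bdQ(\alpha))$. For the character I would show $\ch(\mat{A}_{0})=\ch(\mat{A})$ by treating the two factors of $\ch=\eq\cdot\eqd{d}^{-1}$ separately: the lower-right entries satisfy $d-cn\equiv d\pmod l$, so $\eqd{d-cn}=\eqd{d}$ by the periodicity in Lemma \ref{sigma_Qd_eps_Qd}; and $\eq(\mat{A}_{0})=\eq(\mat{A})$ is immediate for even signature, while for odd signature it reduces to the identity $\left(\frac{c}{d-cn}\right)=\left(\frac{c}{d}\right)$. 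This last equality is the crux: it holds because $4\mid l\mid c$ in the odd case, so that $d-cn\equiv d$ both modulo $c$ and modulo $4$, which is exactly the invariance of the Kronecker symbol $\left(\frac{c}{\cdot}\right)$ under shifting the denominator by a multiple of $c$ when $4\mid c$ (equivalently, the $\mat{T}$-invariance of the theta multiplier $v_{\theta}$ recorded in Remark \ref{rem:chi_as_v_theta}). Granting this, $\ch(\mat{A}_{0})=\ch(\mat{A})$ and the claimed formula follows.
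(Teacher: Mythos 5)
Your proof is correct, and its skeleton coincides with the paper's: write $\mat{A}$ as an element of $\Gamma_{0}^{0}(l)$ times a power of $\mat{T}$ (the paper takes $n\equiv-bd\pmod l$ and $\mat{A}=\mat{X}\mat{T}^{-n}$; you take $an\equiv b\pmod l$ and $\mat{A}=\mat{A}_{0}\mat{T}^{n}$ --- the same factorization), kill the cocycle with Lemma \ref{lem:sigma(A,BTm)}, apply Lemma \ref{lem:action_on_gamma_0^0}, and match scalars via $bd\equiv n\pmod l$ and the $l$-periodicity of $\eqd{d}$. The genuine difference lies in how the crux identity $\left(\frac{c}{d-cn}\right)=\left(\frac{c}{d}\right)$ is obtained, and it changes the architecture of the proof. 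The paper restricts to $c>0$, picks $n$ with $cn+d>0$, derives the identity from two applications of the reciprocity law \eqref{eq:quad-recip}, and must then treat $c<0$ by a separate argument (Lemma \ref{lem:rho(minusA)} together with $\left(\frac{-c}{-d}\right)\left(\frac{c}{d}\right)=\sign(c)e_{4}(1+d)$) and $c=0$ by yet another. You instead use that $\left(\frac{c}{\cdot}\right)$ is periodic modulo $|c|$ whenever $4\mid c$ (equivalently, the $\mat{T}$-invariance of $v_{\theta}$), a periodicity valid across sign changes of the denominator, which makes your argument uniform in the sign of $c$ and eliminates the case analysis entirely. What the paper's route buys is self-containedness: every arithmetic fact is reduced to the single displayed reciprocity formula \eqref{eq:quad-recip}. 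What your route buys is brevity and uniformity; its only external input is the character-periodicity of the Kronecker symbol in the denominator, which is classical but not proved in the paper --- though, as you observe, it can be recovered inside the paper's toolkit from Remark \ref{rem:chi_as_v_theta}, since $\sigma(\mat{A},\mat{T}^{n})=1$ gives $v_{\theta}(\mat{A}\mat{T}^{n})=v_{\theta}(\mat{A})$ and $\varepsilon_{d+cn}=\varepsilon_{d}$ because $4\mid c$, so that justification should be spelled out if one wants the proof at the same level of completeness.
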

\begin{proof}
Suppose that $c>0$. Let $n\equiv-b$ (mod $l$) be such that $cn+d>0$.
Then $na\equiv-bda\equiv-b$ (mod $l$) and 
\[
\left(\begin{smallmatrix}a & b\\
c & d\end{smallmatrix}\right)=\left(\begin{smallmatrix}a & an+b\\
c & cn+d\end{smallmatrix}\right)\left(\begin{smallmatrix}1 & -n\\
0 & 1\end{smallmatrix}\right)=\mat{X}\mat{T}^{-n}
\]
with $\mat{X}\in\Gamma_{0}^{0}\left(l\right)$. By Lemmas \ref{sigma_Qd_eps_Qd} and \ref{lem:action_on_gamma_0^0}
it follows that $\eqd{nc+d}=\eqd{d}$ and 
\[
\r(\mat{X})\vec{e}_{\alpha}
=\ch(\mat{X})\vec{e}_{(nc+d)\alpha}
=\eq\left(\mat{X}\right)\eqd{nc+d}^{-1}\vec{e}_{(nc+d)\alpha}
=\eq\left(\mat{X}\right)\eqd{d}^{-1}\vec{e}_{d\alpha},
\]
where we used that $l|c$ in the last equality. It is therefore enough
to show that $\eq(\mat{X})=\eq(\mat{A})$. If the signature
of $\fqmQ$ is even then $\eq(\mat{X})=\eq(\mat{A})=1$
and we are done. Suppose that the signature is odd. Then $l$ is divisible
by $4$ and we must first show that $\eq(\mat{X})=(\frac{c}{nc+d})$
and $\eq\left(\mat{A}\right)=\left(\frac{c}{d}\right)$ are equal.  
Write $c=2^{k}c_{2}$ where $c_{2}$ is odd and $k\ge2$.
If $8|c$ then $cn+d\equiv d$ (mod $8$) and if $k=2$ then $2^{k}$ is
a square. It follows that $(\frac{cn+d}{c})=(\frac{d}{c})$. 
From the standard quadratic reciprocity law for odd integers \cite[Ch.~I]{cohn:advanced_nt} and the definition of Kronecker's extension of the Jacobi symbol it is easy to show that 
if $x$ and $y$ are any non-zero integers then
\begin{equation}
\Big(\frac{x}{y}\Big)=(x,y)_{\infty}\Big(\frac{y}{x}\Big)e_{8}\big((x_2-1)(y_2-1)\big),\label{eq:quad-recip} 
\end{equation}
where $x_2$ and $y_2$ denote the odd parts of $x$ and $y$.
Using \eqref{eq:quad-recip} twice we see that 
\[
\left(\frac{c}{nc+d}\right)=\left(\frac{nc+d}{c}\right)e_{8}\big((c_{2}-1)(nc+d-1)\big)=\left(\frac{c}{d}\right)e_{8}\big((c_{2}-1)(nc+2d-2)\big),
\]
which is equal to $(\frac{c}{d})$ since $c_{2}$ and $d$ are both odd and $c$ is divisible by $4$.
For $c>0$ we conclude the proof by using  Lemma \ref{lem:sigma(A,BTm)} to show that $\sigma\left(\mat{X},\mat{T}^{-n}\right)=1$ 
and hence that $\r(\mat{A})=\r(\mat{X})\r(\mat{T}^{-n})=\r(\mat{X})\r(\mat{T}^{bd})$.
For $c<0$ we use Lemma \ref{lem:rho(minusA)} together with the fact
that $(\frac{-c}{-d})(\frac{c}{d})=\sign(c)e_{4}(1+d)$.
The case $c=0$ follows immediately from the definition together with Lemma \ref{lem:rho(minusA)} for the case $d=-1$.
\end{proof}
To prove the general formula for $\SLZ$ we essentially repeat the
previous step in going from $\Gamma_{0}^{0}(l)$ to $\Gamma_{0}(l)$,
the main problem is that we need to use elements of
the form $\mat{ST}^{m}\mat{ST}^{n}$, instead of simply $\mat{T}^n$, and therefore have to take care of two parameters
instead of one. 
\subsection{Statement of the main result}
We are now able to give the precise formulation of the main theorem.  
\begin{theorem}\label{thm:main_thm}Let $\fqmQ$
be an FQM with $\fqmQ=(D,Q)$ and bilinear form $B$. Let $\rho_{\fqmQ}$ be the associated
Weil representation,  $\mat{A}=\left(\begin{smallmatrix}a & b\\
c & d\end{smallmatrix}\right)\in\SLZ$ and $\beta\in D$. 
Then 
\[
\r\left(\mat{A}\right)\vec{e}_{\beta}=\xi\left(\mat{A}\right)\sqrt{\frac{\left|D_{c}\right|}{\left|D\right|}}
\sum_{\alpha=x_{c}+c\alpha'}e\left(aQ_c(\alpha)+bdQ\left(\beta\right)+bB\left(\beta,\alpha\right)\right)\vec{e}_{\alpha+d\beta},
 \]
where $Q_c(\alpha)=cQ\left(\alpha'\right)+B\left(\alpha',x_{c}\right)$, $D_{c}$ is the set of elements in $D$ with orders dividing
$c$ and $x_{c}$ is given by Lemma \ref{lem:x_c_def}. The constant
$\xi\left(\mat{A}\right)=\xi\left(a,c\right)$ is an eight root of unity,
given by a fixed Jordan decomposition of $\fqmQ$ as follows:
\begin{itemize}
\item If $c<0$ then $\xi\left(a,c\right)=\xi\left(-a,-c\right)e_{4}\left(\sign\left(\fqmQ\right)\right)$.
\item If $c=0$ then $\xi\left(a,c\right)=1$ if $d>0$, and $\xi\left(a,c\right)=e_{4}\left(\sign\left(\fqmQ\right)\right)$
if $d<0$. 
\item If $c>0$ and $ad=0$ then $\xi\left(a,c\right)=e_{8}\left(-\sign\left(\fqmQ\right)\right)$. 
\end{itemize}
If $c>0$ and $ad\ne0$ then $\xi\left(a,c\right)$ is given by 
\[
\xi\left(a,c\right)=\xi_{0}\prod_{p|\left|D\right|}\xi_{p},
\]
where $\xi_{0}=e_{4}\left(-\sign\left(\fqmQ\right)\right)$
if $\sign\left(\fqmQ\right)$ is even, and if $\sign\left(\fqmQ\right)$ is odd then
\[
\xi_{0}=e_{4}\left(-\sign\left(\fqmQ\right)\right)\left(\frac{-a}{c}\right)\times
\begin{cases}
1 & \text{if $c$ is odd},\\
e_{8}\left(\left(c_{2}+1\right)\left(a+1\right)\right) & \text{if $c$ is even}.
\end{cases}
\]
Furthermore, for $p\ne2$ 
\[
\xi_{p}=\prod_{2<p|q}\left(\frac{-a}{q_{c}^{n_{q}}}\right)\prod_{2<p|q\nmid c}\gammafak_{p}\big(\left(q/q_{c}\right)^{\epsilon_{q}n_{q}}\big)
\left(\frac{c/q_{c}}{\left(q/q_{c}\right)^{n_{q}}}\right).
\]
If $c$ is odd then 
\[
\xi_{2}=e_{8}\left(c\,\oddity\left(\fqmQ\right)\right)\prod_{2|q}\left(\frac{c}{q^{n_{q}}}\right),
\]
and if $c$ is even then 
\begin{align*}
\xi_2 &=e_{8}\left(-\left(1+a\right)\oddity\left(\fqmQ\right)\right) 
\prod_{2|q}\left(\frac{-a}{q_{c}^{n_{q}}}\right)\\
&\times \prod_{2|q\nmid c} e_{8}\left(\frac{-ac}{q_{c}}\oddity\left(\left(\frac{q}{q_{c}}\right)_{*}^{\epsilon_{q}n_{q}}\right)\right)\left(\frac{c/q_{c}}{\left(q/q_{c}\right)^{n_{q}}}\right).
\end{align*}

Here the products are over all non-trivial Jordan components of $\fqmQ$
of order $q$ with $q$ a power of $2$ or a prime $p>2$, $q_{c}=\left(q,c\right)$
and $c_{2}$ is the odd part of $c$. The factors $\gammafak_{2}\left(\cdot\right)$
and $\gammafak_{p}\left(\cdot\right)$ are defined by \eqref{eq:def:gamma_p},
and $*=t_{q}$ if the $2$-adic component of order $q$ is odd.
\end{theorem}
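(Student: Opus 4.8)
The plan is to extend the formula from $\Gamma_{0}(l)$, established in Lemma~\ref{lem:action_on_gamma_0}, to all of $\SLZ$, using the same mechanism that took us from $\Gamma_{0}^{0}(l)$ to $\Gamma_{0}(l)$, but now replacing the single translation $\mat{T}^{-n}$ by a Bruhat-type element $\mat{B}=\mat{S}\mat{T}^{m}\mat{S}\mat{T}^{n}$. First I would dispose of the degenerate cases. When $c=0$ the matrix is $\pm\mat{T}^{b}$, and the stated values $\xi=1$ (for $d>0$) and $\xi=e_{4}(\sign(\fqmQ))$ (for $d<0$) follow from the definition of $\r$ on $\mat{T}$ together with Lemma~\ref{lem:rho(minusA)}. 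When $c>0$ and $ad=0$ the matrix is $\mat{S}\mat{T}^{d}$ or $\mat{T}^{a}\mat{S}$, so the factor $e_{8}(-\sign(\fqmQ))=\si$ comes directly from the action of $\mat{S}$. Finally, the case $c<0$ reduces to $c>0$ by applying Lemma~\ref{lem:rho(minusA)} to $-\mat{A}$, which is exactly what produces the relation $\xi(a,c)=\xi(-a,-c)e_{4}(\sign(\fqmQ))$. This leaves the main case $c>0$, $ad\neq0$.

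For this case a short matrix computation gives, with $\mat{B}=\mat{S}\mat{T}^{m}\mat{S}\mat{T}^{n}$, that $\mat{X}:=\mat{A}\mat{B}^{-1}=\left(\begin{smallmatrix} a(mn-1)-bm & an-b\\ m(cn-d)-c & cn-d\end{smallmatrix}\right)$. I would choose $n$ so that $cn-d>0$ and $\gcd(cn-d,l)=1$, and then $m>0$ with $m(cn-d)\equiv c \pmod l$. This forces $\mat{X}\in\Gamma_{0}(l)$ and, since $cn-d$ is a unit mod $l$, also $(m,l)=(c,l)$, so that $D_{m}=D_{c}$, $x_{m}=x_{c}$ and $Q_{m}=Q_{c}$ by Lemmas~\ref{lem:x_c_def} and~\ref{lem:Q_c_def}. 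With $\mat{A}=\mat{X}\,\mat{S}\mat{T}^{m}\mat{S}\mat{T}^{n}$ and $c>0$, $cn-d>0$, $m>0$, Lemma~\ref{lem:sigma(xStmSTn)=00003D1} yields $\sigma_{\fqmQ}(\mat{X},\mat{B})=1$, so $\r(\mat{A})=\r(\mat{X})\r(\mat{B})$ with no cocycle correction; moreover the choice $m>0$ makes the internal cocycles vanish by Lemmas~\ref{lem:sigma(A,BTm)} and~\ref{lem:sigma(STmSTn)}, so that $\r(\mat{B})=\r(\mat{S})\r(\mat{T})^{m}\r(\mat{S})\r(\mat{T})^{n}$ exactly.

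The computation of $\r(\mat{B})$ is the analytic heart. Inserting the definitions of $\r$ on $\mat{S}$ and $\mat{T}$ produces a double sum over $D\times D$ of the form $\si^{2}|D|^{-1}\sum_{\delta,\mu}e\big(nQ(\beta)-B(\beta,\delta)+mQ(\delta)-B(\delta,\mu)\big)\vec{e}_{\mu}$. Writing $\eta=-(\beta+\mu)$, the inner sum over $\delta$ becomes $\tfrac{1}{\sqrt{|D|}}\sum_{\delta}e\big(mQ(\delta)+B(\eta,\delta)\big)$, which Lemma~\ref{lem:Scheithauer3.9} evaluates as $\sqrt{|D_{m}|}\,\Scheps{m}\,e(-Q_{m}(\eta))$ and which vanishes unless $\eta\in D^{m*}$. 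This is precisely what supplies the prefactor $\sqrt{|D_{c}|/|D|}$, the constant $\Scheps{c}$, and the restriction of the outer sum to $\alpha=x_{c}+c\alpha'$. I would then apply $\r(\mat{X})$ through Lemma~\ref{lem:action_on_gamma_0}, and use $m(cn-d)\equiv c\pmod l$, the identity $cQ_{c}\equiv Q-Q(x_{c})$ of Lemma~\ref{lem:Q_c_def}, and the congruence-invariance of $\eqd{\cdot}$, $\Scheps{\cdot}$ and $x_{c}$ to rewrite every phase in terms of $a$, $c$ and the data of $\fqmQ$, verifying that the auxiliary parameters $m,n$ cancel and that the phase collapses to $aQ_{c}(\alpha)+bdQ(\beta)+bB(\beta,\alpha)$ on the vector $\vec{e}_{\alpha+d\beta}$.

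The main obstacle will be the final bookkeeping: collecting the eight-roots of unity $\si$, $\Scheps{c}$, $\ch(\mat{X})$ and the $\varepsilon_{d}$-factors into the single constant $\xi(a,c)$ and matching it component by component against the stated product over Jordan components. This requires expanding $\Scheps{c}$ by Lemma~\ref{lem:gauss_sum_eq_gamma_p} into the $\gammafak_{p}$ and $\gammafak_{2}$ factors of~\eqref{eq:def:gamma_p}, separating the $p>2$ contribution $\xi_{p}$ from the $2$-adic part $\xi_{2}$, and invoking quadratic reciprocity~\eqref{eq:quad-recip} together with~\eqref{eq:eps_d_formula} to convert the residue symbols in $d$ and $cn-d$ into the symbols in $a$ and $c$ occurring in $\xi_{0}$. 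I expect the even-$c$ form of $\xi_{2}$, with its oddity-dependent exponential $e_{8}(-(1+a)\oddity(\fqmQ))$ and the factor $e_{8}((c_{2}+1)(a+1))$, to be the most delicate, since it is there that the theta-multiplier part of $\ch(\mat{X})$ and the $2$-adic Gauss sums must be reconciled.
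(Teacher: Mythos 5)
Your proposal follows the paper's own proof almost step for step: the same treatment of the degenerate cases $c=0$ and $c>0$, $ad=0$, the reduction of $c<0$ to $c>0$ via Lemma \ref{lem:rho(minusA)}, the decomposition $\mat{A}=\mat{X}\cdot\mat{S}\mat{T}^{m}\mat{S}\mat{T}^{n}$ with $\mat{X}\in\Gamma_{0}(l)$, the vanishing of the relevant cocycles via Lemmas \ref{lem:sigma(A,BTm)}, \ref{lem:sigma(STmSTn)} and \ref{lem:sigma(xStmSTn)=00003D1}, the evaluation of the inner Gauss sum by Lemma \ref{lem:Scheithauer3.9}, and the application of Lemma \ref{lem:action_on_gamma_0} to $\mat{X}$. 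However, there is a genuine gap in the main case $c>0$, $ad\neq0$: the conditions you impose on $(m,n)$ --- positivity, $(cn-d,l)=1$ and $m(cn-d)\equiv c$ (mod $l$) --- are too weak to carry out the final evaluation of $\xi$. After the reduction one is left with $\xi=\si^{2}\,\Scheps{m}\,\eq(\mat{X})\,\eqd{d'}^{-1}$ where $d'=cn-d$, and the individual factors contain Kronecker symbols and exponentials such as $\left(\frac{d'}{q^{n_{q}}}\right)$, $\left(\frac{m/q_{c}}{\left(q/q_{c}\right)^{n_{q}}}\right)$, $e_{8}\big((d'-1)\oddity(\fqmQ)\big)$ and $e_{8}\big(\frac{m}{q_{c}}\oddity(\cdot)\big)$, which for the $2$-adic components depend on $m$ and $d'$ modulo $8$. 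The congruence $md'\equiv c$ (mod $l$) does not pin these residues down: it constrains only the product $md'$, never $m$ and $d'$ separately, and when $8\nmid l$ (e.g.\ odd signature with a single component $2_{t}^{\pm1}$, so $l=4$) even the product is undetermined mod $8$, so that $\left(\frac{d'}{2}\right)$ genuinely varies with your choice of $n$. Only the full product $\si^{2}\Scheps{m}\eq(\mat{X})\eqd{d'}^{-1}$ is choice-independent, so to extract the stated closed formula one must \emph{normalize} the choice. This is precisely the content of Assumption \ref{def:main_definition_abcd_mn}, which additionally demands $cn-d\equiv1$ and $m\equiv c$ (mod $8$) when $c$ is odd, and $an-b\equiv0$ and $m\equiv-ac$ (mod $8$) when $c$ is even, together with the existence lemma (proved via Dirichlet's theorem on primes in arithmetic progressions); only then can Lemmas \ref{lem:kronecker(c',d')}, \ref{lem:xi2} and \ref{lem:xip} replace $m$ and $d'$ by $a$ and $c$ throughout. ``Quadratic reciprocity plus congruence invariance,'' as you propose, cannot substitute for this normalization.

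A second, smaller error: from $(m,l)=(c,l)$ you conclude $Q_{m}=Q_{c}$. This is false; only $D_{m}=D_{c}$, $D^{m*}=D^{c*}$ and $x_{m}=x_{c}$ follow. Indeed, by Lemma \ref{lem:Q_c_def}, for $(c,l)=1$ one has $Q_{c}\equiv\bar{c}Q$ with $c\bar{c}\equiv1$ (mod $l$), so $Q_{c}$ depends on the class of $c$ mod $l$ and not merely on $(c,l)$; in your setup $m\equiv\overline{(cn-d)}\,c\not\equiv c$ (mod $l$) in general, so $Q_{m}\neq Q_{c}$. What is actually needed --- and what the paper invokes from Scheithauer --- is the phase identity $nQ(\beta)-Q_{m}(\gamma)+b'd'Q(\gamma-\beta)\equiv aQ_{c}(\mu)+bdQ(\beta)+bB(\beta,\mu)$ (mod $1$), with $\mu=d'(\gamma-\beta)-d\beta$, in which the discrepancy between $Q_{m}$ and $aQ_{c}$ is absorbed by the change of variables $\gamma\mapsto\mu$. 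With these two repairs --- the mod-$8$ normalization of $(m,n)$ and the correct phase identity --- your plan coincides with the paper's proof.
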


\begin{proof}
We prove the theorem in a sequence of steps, similar to those
in the proof by Scheithauer \cite[Thm.\ 4.7]{scheithauer:weil_rep}. The 
details are presented in the next section.
\end{proof}
\begin{remark}
It is not difficult to verify that when the signature is even then the formula in Theorem \ref{thm:main_thm} reduces
to the one by Scheithauer \cite[Thm.\ 4.7]{scheithauer:weil_rep} for the dual representation. 
For this purpose it is helpful to recall that $\r$ is unitary and if the signature is even then the dual representation $\r^{*}$ is given by 
$\r^{*}(\mat{A})=\bar{\rho}_{\fqmQ}(\mat{A})=\r(\mat{A}^{-1})^{\text{t}}$.
\end{remark}

\begin{remark}
\label{rem:main_thm_mat_elt}If $\alpha,\beta\in D$ and $\mat{A}=\left(\begin{smallmatrix}a & b\\
c & d\end{smallmatrix}\right)\in\SLZ$ then the matrix coefficient of $\r$ with index $\alpha,\beta$ is
given by\[
\r\left(\mat{A}\right)_{\alpha\beta}=\xi\left(\mat{A}\right)\sqrt{\left|D_{c}\right|/\left|D\right|}e\left(acQ\left(\alpha'\right)+aB\left(x_{c},\alpha'\right)-bdQ\left(\beta\right)+bB\left(\beta,\alpha\right)\right)\]
if there is an element $\alpha'\in D$ such that $\alpha=d\beta+x_{c}+c\alpha'$
and otherwise $\r\left(\mat{A}\right)_{\alpha,\beta}=0$. 
In the first case the value is independent of the
choice of $\alpha'$. Note that if $c=0$ then $D_{c}=D,$ $x_{c}=0,$
$D^{c*}=\left\{ 0\right\} $ and thus $\r\left(\mat{A}\right)_{\alpha\beta}=\delta_{\alpha,d\beta}\xi\left(\mat{A}\right)e\left(bdQ\left(\beta\right)\right).$
\end{remark}

In general it does not seem to be possible to simplify the expression
of the factor $\xi\left(\mat{A}\right)$. However, for the discriminant
form of Example \ref{exa:Nx^2}, which is connected to classical Jacobi
forms, we have the following corollary. Using elementary properties of Kronecker symbols it is easy to show
 that the formula in this corollary is equivalent to that of \cite[Thm.~3]{lfg}. 
\begin{corollary}
\label{cor:explicit_Nx^2}Let $N\in\ZZ^{+}$ and consider the FQM given by the
discriminant form $\fqmQ_{N}=\left(D,Q\right)$, with $D=\ZZ/2N\ZZ$
and $Q\left(x\right)=\frac{x^{2}}{4N}$ (mod $1$). If $c$ is an integer
we define $z_{c}\in D$ by $z_{c}=N$ if $\left|2N\right|_{2}=\left|c\right|_{2}$
and $z_{c}=0$ otherwise. Let $\mat{A}=\left(\begin{smallmatrix}a & b\\
c & d\end{smallmatrix}\right)\in\SLZ$ and $x,y\in D$. If there exists an integer solution $v$ to $x=dy+z_{c}+cv$ (mod $2N$)
then
\[
\rho_{\fqmQ_{N}}\left(\mat{A}\right)_{x,y}=\xi\sqrt{\left(2N,c\right)/2N}\, e_{4N}\left(acv^{2}+2z_{c}\left(av+by\right)+bdy{}^{2}+2bcvy\right)
\]
and otherwise $\rho_{\fqmQ_{N}}\left(\mat{A}\right)_{x,y}=0$. In
the first case, the expression is independent of the choice of $v$,
and $\xi$ is an eight-root of unity given as follows: If $c\ne0$
write $2N=2^{m}N_{2}$ and $c=2^{n}c_{2}$ with $N_{2}$ and $c_{2}$
odd. Then $\xi=\xi(a,c)$, defined by 
\[
\xi(a,c)=\left(a,c\right)_{\infty}\left(\frac{a2N/\left(2N,c\right)}{c/\left(2N,c\right)}\right)e_{8}\big(c_{2}N_{2}\delta a -c_{2}(N_{2},c_{2}) \big)
\]
where $\delta=1$ if $\left|2N/c\right|_{2}\ge1$ and otherwise $\delta=0$.
For $c=0$ we have $\xi=1$ if $d=1$ and $\xi=i^{-1}$ if $d=-1$. \end{corollary}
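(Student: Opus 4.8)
The plan is to specialize Theorem \ref{thm:main_thm}, in the matrix-coefficient form of Remark \ref{rem:main_thm_mat_elt}, to the discriminant form $\fqmQ_N$ and then simplify. First I would record the basic data of $\fqmQ_N$. Since $Q(x)\equiv\frac{x^{2}}{4N}$ (mod $1$) the associated bilinear form is $B(x,y)\equiv\frac{xy}{2N}$ (mod $1$), we have $|D|=2N$, and $D_{c}$ is the subgroup of $\ZZ/2N\ZZ$ of order $(2N,c)$, so that $\sqrt{|D_{c}|/|D|}=\sqrt{(2N,c)/2N}$. By Example \ref{exa:Nx^2} and the Jordan decomposition computed in Section \ref{sec:On-Jordan-Decompositions}, $\fqmQ_N$ has exactly one cyclic Jordan component for each prime $p\mid2N$; the $2$-adic one is the odd component $q_{t}^{\epsilon_{2}}$ with $q=2^{m_{2}}$, $t=N_{2}$, and the signature is $1$, in particular odd, so we are in the metaplectic case and $\eq(\mat{A})=(\frac{c}{d})$. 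By Lemma \ref{lem:x_c_def}, $x_{c}$ is nonzero precisely when $2^{m_{2}}\parallel c$, that is when $|2N|_{2}=|c|_{2}$, and in that case $x_{c}=2^{m_{2}-1}N_{2}=N$; this is exactly the element $z_{c}$ of the statement.

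Next I would substitute into the exponent of Remark \ref{rem:main_thm_mat_elt}. Writing $x,y,v$ for $\alpha,\beta,\alpha'$ and $z_{c}$ for $x_{c}$, the solvability condition $\alpha=d\beta+x_{c}+c\alpha'$ becomes $x\equiv dy+z_{c}+cv$ (mod $2N$), and expanding $acQ(\alpha')+aB(x_{c},\alpha')-bdQ(\beta)+bB(\beta,\alpha)$ with $B(x,y)\equiv\frac{xy}{2N}$ and $\alpha=dy+z_{c}+cv$ collects, after a routine calculation, into $\frac{1}{4N}(acv^{2}+2z_{c}(av+by)+bdy^{2}+2bcvy)$. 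This yields the claimed $e_{4N}(\cdots)$, and independence of the choice of $v$ follows since $c$ and $z_{c}$ annihilate $D_{c}$. Everything is thus reduced to identifying $\xi(\mat{A})=\xi(a,c)$.

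The heart of the proof is the evaluation of $\xi(a,c)$ for $c>0$ and $ad\ne0$ from $\xi_{0}\prod_{p\mid|D|}\xi_{p}$, specialized to the single cyclic component at each prime. For an odd prime $p\mid2N$ the component is $q^{\epsilon_{p}}$ with $q=p^{m_{p}}$ and rank $n_{q}=1$, so $\xi_{p}$ reduces to the single symbol $(\frac{-a}{q_{c}})$ together with $\gammafak_{p}((q/q_{c})^{\epsilon_{q}})(\frac{c/q_{c}}{q/q_{c}})$ when $q\nmid c$; the latter I would evaluate through the explicit formula for $\gammafak_{p}$ in terms of the $\pexcess$. For $p=2$ the single odd $2$-adic component gives $\oddity(\fqmQ_N)=N_{2}+4k$ with $k\in\{0,1\}$ as in Lemma \ref{lem:def-of-component}, and I would treat the parities of $c$ separately exactly as in Theorem \ref{thm:main_thm}, inserting $\gammafak_{2}(q_{t}^{\epsilon})=(\frac{t}{q})e_{8}(t)$. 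Combining $\xi_{2}$ with the odd-$p$ factors and with $\xi_{0}$, I would use the reciprocity law \eqref{eq:quad-recip} to merge the separate Jacobi symbols into the single symbol $(\frac{a\,2N/(2N,c)}{c/(2N,c)})$ and to extract the Hilbert factor $(a,c)_{\infty}$, while gathering all the $e_{8}$-phases, coming from the oddity, the $\pexcess$ and the reciprocity corrections, into $e_{8}(c_{2}N_{2}\delta a-c_{2}(N_{2},c_{2}))$, where $\delta=1$ exactly when $|2N/c|_{2}\ge1$.

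Finally I would dispose of the remaining cases. The case $c=0$ is read off directly from Theorem \ref{thm:main_thm}, giving $\xi=1$ or $\xi=i^{-1}$ according to whether $d=1$ or $d=-1$, while $c<0$ follows from the symmetry $\xi(a,c)=\xi(-a,-c)e_{4}(\sign(\fqmQ))$ once one checks that the claimed closed form transforms consistently under $(a,c)\mapsto(-a,-c)$, in particular through $(a,c)_{\infty}$. The main obstacle is squarely the bookkeeping in the third step: keeping track of every $e_{8}$-phase produced by the local factors $\gammafak_{p},\gammafak_{2}$ and by the sign corrections in \eqref{eq:quad-recip}, and verifying that in this one-component-per-prime setting they consolidate into the compact phase $e_{8}(c_{2}N_{2}\delta a-c_{2}(N_{2},c_{2}))$; the $2$-adic factor and the even-$c$ subcase require the most care. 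The asserted equivalence with \cite[Thm.~3]{lfg} then follows from elementary Kronecker-symbol identities.
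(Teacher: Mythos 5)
Your reduction matches the paper's opening moves: both specialize Remark \ref{rem:main_thm_mat_elt} to $\fqmQ_{N}$, identify $x_{c}=z_{c}$ via Lemma \ref{lem:x_c_def} and $\sqrt{|D_{c}|/|D|}=\sqrt{(2N,c)/2N}$, and both dispose of $c<0$ through Lemma \ref{lem:rho(minusA)} (the paper treats your exponent computation as immediate from the remark). The genuine divergence, and the gap, is in how $\prod_{p}\xi_{p}$ is evaluated for $c>0$. You propose to compute each $\gammafak_{p}\big(\left(q/q_{c}\right)^{\epsilon_{q}}\big)$ from the $\pexcess$ and then merge all symbols and phases by repeated use of \eqref{eq:quad-recip}, calling this ``bookkeeping''. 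The paper does something structurally different: it converts the signs $\epsilon_{q}$ of $\fqmQ_{N}$ into the signs $\epsilon_{q}'$ of the rescaled discriminant form $\fqmQ_{N'}$ with $N'=2N/(2N,c)$ (this conversion costs exactly the symbols $\left(\frac{(2N,c)/q_{c}}{q/q_{c}}\right)$), so that $\prod_{p>2}\xi_{p}$ becomes a product of the genuine local invariants of $\fqmQ_{N'}$, and then evaluates that product in one stroke by Milgram's formula \eqref{eq:milgrams_formula} applied to $\fqmQ_{N'}$: since the product over \emph{all} primes is $e_{8}\big(\sign(\fqmQ_{N'})\big)=e_{8}(1)$, the odd-prime product equals $e_{8}(1)\,\gammafak_{2}\big(\left(q/q_{c}\right)_{t'}^{\epsilon_{q}'}\big)^{-1}$, a single explicit $2$-adic quantity. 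This is not cosmetic: in your route the per-prime factors carry symbols $\left(\frac{2a_{p}}{q/q_{c}}\right)$ whose numerators $a_{p}$ vary with $p$, and the phases $e_{8}\left(1-q/q_{c}\right)$ do not sum to a closed form without the reciprocity cross terms; consolidating them is precisely the content of a Milgram-type (oddity-formula) identity, so you would in effect be re-proving \eqref{eq:milgrams_formula} for this family rather than doing routine arithmetic. The fix is cheap: organize your third step around $\fqmQ_{N'}$ and invoke \eqref{eq:milgrams_formula} (equivalently \eqref{eq:oddity_formula}) for it, as the paper does; only the factor $\xi_{2}$ and the symbols $\left(\frac{-a}{q_{c}^{n_{q}}}\right)$ then need hand evaluation.

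A second, smaller point concerns $c=0$. You say this case is ``read off directly from Theorem \ref{thm:main_thm}'', giving $\xi=i^{-1}$ for $d=-1$. But the theorem's bullet states $\xi=e_{4}\big(\sign(\fqmQ)\big)$ for $c=0$, $d<0$, which for $\sign(\fqmQ_{N})=1$ is $i$, not $i^{-1}$. The value $i^{-1}$ is nevertheless the correct one: the canonical lift of $\left(\begin{smallmatrix}-1 & 0\\ 0 & -1\end{smallmatrix}\right)$ is $\widetilde{\mat{Z}}$, which by Lemmas \ref{lem:action_of_Z} and \ref{lem:The-Witt-invariant} acts by $\si^{2}=e_{4}\big(-\sign(\fqmQ)\big)$, so the printed bullet carries a sign discrepancy for odd signature. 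This is presumably why the paper derives the $c=0$ case from Lemma \ref{lem:action_on_gamma_0} rather than from the theorem; you should do likewise (or note the sign correction explicitly), since a literal read-off of the theorem as stated contradicts the value you assert.
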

\begin{proof}
From Remark \ref{rem:main_thm_mat_elt} it is clear that we need only
evaluate $\xi$. If $c=0$ the formula follows directly from Lemma \ref{lem:action_on_gamma_0}. 
Assume that $c>0$. We need to compare the $p$-adic components,
$\xi_{p}$, $p>2$, for the FQM $\fqmQ_{N'}$ where $N'=2N/\left(c,2N\right)$
with the corresponding components for $\fqmQ_{N}$. The only
difference is in the associated signs $\epsilon_{q}'$ and $\epsilon_{q}$
and we get that
\[
\gammafak_{p}\left(\left(q/q_{c}\right)^{\epsilon_{q}'}\right)=\gammafak_{p}\left(\left(q/q_{c}\right)^{\epsilon_{q}}\right)\left(\frac{\left(2N,c\right)/q_{c}}{q/q_{c}}\right)\]
from which it follows that \[
\prod_{p>2}\xi_{p}=\prod_{2<p|q}\left(\frac{c/q_{c}}{q/q_{c}}\right)\gammafak_{p}\left(\left(q/q_{c}\right)^{\epsilon_{q}}\right)=\left(\frac{c/\left(2N,c\right)}{N_{2}/\left(N_{2},c_{2}\right)}\right)\prod_{2<p|q}\gammafak_{p}\left(\left(q/q_{c}\right)^{\epsilon_{q}'}\right).\]
By applying Milgram's formula for the FQM $\fqmQ_{N'}$
to the right hand side we see that 
\begin{align*}
\prod_{p>2}\xi_{p}
&=\left(\frac{c/\left(2N,c\right)}{N_{2}/\left(N_{2},c_{2}\right)}\right)e_{8}\left(1\right)\gammafak_{2}\left(\left(q/q_{c}\right)_{t'}^{\epsilon'_{q}}\right)^{-1}\\
&=\left(\frac{c/\left(2N,c\right)}{N_{2}/\left(N_{2},c_{2}\right)}\right)e_{8}\left(1-\frac{N_{2}}{\left(N_{2},c_{2}\right)}\right)\left(\frac{N_{2}/\left(N_{2},c\right)}{2^{m}/\left(2^{m},2^{n}\right)}\right).
\end{align*}
Recall that $\oddity\left(\fqmQ_{N}\right)=N_{2}$. 
Assume now that $c$ is even. Then 
\[
\xi_{2}=e_{8}\left(-\left(1+a\right)N_{2}-\delta'ac_{2}\,\oddity\left(q/q_{c}\right)_{t}^{\epsilon_{q}}\right)\left(\frac{-a}{\left(2^{m},2^{n}\right)}\right)\left(\frac{c/\left(2^{m},2^{n}\right)}{2^{m}/\left(2^{m},2^{n}\right)}\right),
\]
where $\delta'=1$ if $2^{m}\nmid2^{n}$ and $0$ otherwise. Using
$\left(a+1\right)\left(c_{2}+1\right)\left(1-N_{2}\right)\equiv0$ (mod $8$)
we can show that  $\xi=\xi_{0}\xi_{2}\prod\xi_{p}$ is equal to 
\[
\left(\frac{-a}{c/\left(2N,c\right)}\right)\left(\frac{c/\left(2N,c\right)}{2N/\left(2N,c\right)}\right)e_{8}\left(-1-\frac{N_{2}}{\left(N_{2},c\right)}+c_{2}N_{2}\left(1+a-a\delta'\right)\right).
\]
By quadratic reciprocity \eqref{eq:quad-recip} and the elementary fact that the square
of any odd integer is congruent to $1$ modulo $8$ we can also show
that  \[
\left(\frac{c/\left(2N,c\right)}{2N/\left(2N,c\right)}\right)\left(\frac{2N/\left(2N,c\right)}{c/\left(2N,c\right)}\right)=e_{8}\left(-c_{2}N_{2}-1+N_{2}\left(N_{2},c_{2}\right)+c_{2}\left(N_{2},c_{2}\right)\right).\]
Combining this with the formula $\left(\frac{-1}{d}\right)=e_{4}\left(1-d\right)$
for odd $d$ we arrive at 
\[
\xi=\left(\frac{a2N/\left(2N,c\right)}{c/\left(2N,c\right)}\right)e_{8}\left(c_{2}N_{2}\delta a-c_{2}\left(N_{2},c_{2}\right)\right)
\]
with $\delta=1-\delta'$. It is easy to verify that the same formula 
holds for odd $c$, in which case $\delta=0$. For $c<0$
the result follows directly by Lemma \ref{lem:rho(minusA)}, which
says that $\r\left(-\mat{A}\right)\vec{e}_{\gamma}=i\r\left(\mat{A}\right)\vec{e}_{-\gamma}$,
and by noting that $\xi\left(-a,-c\right)=\sign\left(a\right)i^{-1}\xi\left(a,c\right).$
\end{proof}

\section{Proof of the main theorem }
\label{sec:Proof-of-Theorem}
The following lemmas are analogues of the corresponding lemmas and
proposition by Scheithauer \cite[Sect.\ 4]{scheithauer:weil_rep}. We use the
notation of the main theorem, that is, let $\fqmQ$
be a finite quadratic module with abelian group $D$, quadratic form $Q$, bilinear form $B$ and level $l$.
Furthermore, let $\rho_{\fqmQ}$  be the Weil representation associated to $\fqmQ$ and let $\mat{A}=\left(\begin{smallmatrix}a & b\\
c & d\end{smallmatrix}\right)\in\SLZ$. As the first step we give a formula for $\r$ on a set of coset-representatives
of $\Gamma_{0}\left(l\right)$ in $\SLZ$. 
Note that if $c=0$ then the theorem is a consequence of Lemma \ref{lem:action_on_gamma_0}. It is therefore enough to consider $c\ne0$.
\begin{lemma}
\label{lem:rho(STmSTn)}Let $m$ and $n$ be integers and suppose
that $m$ is positive. Then 
\[
\r\left(\mat{ST}^{m}\mat{ST}^{n}\right)\vec{e}_{\beta}=\sqrt{{|D_{m}|}/{|D|}}\si^{2}\,\Scheps{m}\,
e\left(nQ\left(\beta\right)\right)\sum_{\gamma\in D^{m*}}e\left(-Q_{m}\left(\gamma\right)\right)\vec{e}_{\gamma-\beta}
\]
where $D_{m}$, $D^{m*}$ and $Q_{m}$ are defined in Section \ref{sec:On-Jordan-Decompositions}, and
$\Scheps{m}$ is defined in Lemma \ref{lem:Scheithauer3.9}.
\end{lemma}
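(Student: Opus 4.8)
The plan is to express $\r(\mat{ST}^{m}\mat{ST}^{n})$ as the honest composite $\rt(\widetilde{\mat{S}})\rt(\widetilde{\mat{T}})^{m}\rt(\widetilde{\mat{S}})\rt(\widetilde{\mat{T}})^{n}$ of the generator operators from Definition \ref{def:weil-rep}, to apply these operators to $\vec{e}_{\beta}$ from right to left, and then to recognise the resulting inner sum as an instance of the Gauss sum evaluated in Lemma \ref{lem:Scheithauer3.9}. The first task is to justify that no metaplectic cocycle intervenes, and this is precisely where the hypothesis $m>0$ is used. Since $\widetilde{\mat{T}}=(\mat{T},1)$ is the canonical lift of $\mat{T}$ (and $\widetilde{\mat{T}}^{k}=(\mat{T}^{k},1)$ is that of $\mat{T}^{k}$), Lemma \ref{lem:sigma(A,BTm)} makes every cocycle factor of the form $\sigma(\cdot,\mat{T}^{k})$ trivial, so the only potentially nontrivial contribution is $\sigma(\mat{ST}^{m},\mat{S})$. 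Applying Lemma \ref{lem:sigma(A,S)} to $\mat{ST}^{m}=\left(\begin{smallmatrix}0&-1\\1&m\end{smallmatrix}\right)$ gives $\epsilon_{1,m}$, which equals $1$ exactly because $m>0$. Hence $\widetilde{\mat{S}}\widetilde{\mat{T}}^{m}\widetilde{\mat{S}}\widetilde{\mat{T}}^{n}$ is the canonical lift of $\mat{ST}^{m}\mat{ST}^{n}$, and $\r(\mat{ST}^{m}\mat{ST}^{n})$ is the plain composite of generator operators.

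Next I would apply the operators to $\vec{e}_{\beta}$. The factor $\rt(\widetilde{\mat{T}})^{n}$ contributes the scalar $e(nQ(\beta))$; the subsequent $\rt(\widetilde{\mat{S}})$, $\rt(\widetilde{\mat{T}})^{m}$, $\rt(\widetilde{\mat{S}})$ produce a double sum over $\delta,\mu\in D$ carrying two factors $\si$, two factors $1/\sqrt{|D|}$, and the total phase $e\big(mQ(\delta)-B(\beta,\delta)-B(\delta,\mu)\big)$. Collecting the coefficient of $\vec{e}_{\mu}$ and using bilinearity to rewrite $-B(\beta,\delta)-B(\delta,\mu)=B(-(\beta+\mu),\delta)$, the inner sum becomes
\[
\sum_{\delta\in D}e\big(mQ(\delta)+B(\alpha,\delta)\big),\qquad \alpha=-(\beta+\mu).
\]
By Lemma \ref{lem:Scheithauer3.9} (with $c=m\ne 0$) this vanishes unless $\alpha\in D^{m*}$, and otherwise equals $\sqrt{|D|}\,\sqrt{|D_{m}|}\,\Scheps{m}\,e(-Q_{m}(\alpha))$. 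The normalising constants then collapse, since $\si^{2}\cdot|D|^{-1}\cdot\sqrt{|D|}\,\sqrt{|D_{m}|}=\si^{2}\sqrt{|D_{m}|/|D|}$, which matches the prefactor in the statement.

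Finally I would reindex by $\gamma=\beta+\mu$ and verify the two symmetries needed to put the sum in the stated form. The support condition $-\gamma\in D^{m*}$ is equivalent to $\gamma\in D^{m*}$ because $D^{m*}=x_{m}+D^{m}$ is stable under negation, using $2x_{m}=0$ from Lemma \ref{lem:x_c_def}. Likewise $Q_{m}(-\gamma)=Q_{m}(\gamma)$: writing $my=\gamma-x_{m}$ as in Lemma \ref{lem:Q_c_def}, one has $m(-y)=-\gamma-x_{m}$ (again by $2x_{m}=0$), so $Q_{m}(-\gamma)-Q_{m}(\gamma)=-2B(x_{m},y)=-B(2x_{m},y)=0$ in $\QQ/\ZZ$. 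With these facts the sum over $\mu$ becomes $\sum_{\gamma\in D^{m*}}e(-Q_{m}(\gamma))\vec{e}_{\gamma-\beta}$, which is the claimed formula. I expect the genuinely delicate point to be the bookkeeping of the metaplectic cocycle in the first paragraph, where the positivity of $m$ is essential; once that is settled the remainder is a direct application of the Gauss-sum evaluation and two short invariance checks.
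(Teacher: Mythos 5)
Your proposal is correct and takes essentially the same approach as the paper: use $m>0$ to trivialize the metaplectic cocycle so that $\r(\mat{ST}^{m}\mat{ST}^{n})$ is the plain composite of generator operators, expand into a double sum over $D$, evaluate the inner Gauss sum with Lemma \ref{lem:Scheithauer3.9}, and reindex. The only cosmetic differences are that you handle the cocycle by applying Lemma \ref{lem:sigma(A,S)} to $\mat{ST}^{m}$ (where the paper invokes Lemma \ref{lem:sigma(STmSTn)}), and that you spell out the negation-invariance of $D^{m*}$ and $Q_{m}$ via $2x_{m}=0$, a detail the paper leaves implicit when it applies Lemma \ref{lem:Scheithauer3.9} to the sum carrying $-B(\alpha+\beta,\gamma)$ rather than $+B(\alpha+\beta,\gamma)$.
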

By Lemmas \ref{lem:sigma(A,BTm)} and \ref{lem:sigma(STmSTn)} we
see that $\sigma\left(\mat{S},\mat{T}^{n}\right)=\sigma\left(\mat{ST}^{m},\mat{ST}^{n}\right)=1$
if $m>0$. Hence
\begin{align*}
\r\left(\mat{ST}^{n}\right) & =\r(\mat{S})\r(\mat{T}^{n})\sigma(\mat{S},\mat{T}^{n})=\r(\mat{S})\rho_{\fqmQ}(\mat{T}^{n})\quad\textrm{and}\\
\r\left(\mat{ST}^{m}\mat{ST}^{n}\right) & =\r(\mat{ST}^{m})\r(\mat{ST}^{n}).
\end{align*}
For $\beta\in D$ we have 
\begin{align*}
\r(\mat{ST}^{n})\vec{e}_{\beta} & =\r(\mat{S})\r(\mat{T}^{n})\vec{e}_{\beta}=\frac{\si}{\sqrt{\left|D\right|}}e\big(nQ(\beta)\big)
\sum_{\gamma\in D}e\left(-B\left(\gamma,\beta\right)\right)\vec{e}_{\gamma}\quad\text{and}\\
\r(\mat{ST}^{m}\mat{ST}^{n})\vec{e}_{\beta} & =\frac{\si^{2}}{\left|D\right|}e\big(nQ(\beta)\big)
\sum_{\alpha\in D}\sum_{\gamma\in D}e\big(mQ(\gamma)-B(\gamma,\alpha+\beta)\big)\vec{e}_{\alpha}.
\end{align*}
The inner sum is evaluated with the help of Lemma \ref{lem:Scheithauer3.9} and we get that 
\[
\frac{1}{\sqrt{\left|D\right|}}\sum_{\gamma\in D}e\big(mQ(\gamma)-B(\alpha+\beta,\gamma)\big)
=\sqrt{\left|D_{m}\right|}\,\Scheps{m}\,e\big(-Q_{m}(\alpha+\beta)\big)
\]
if $\alpha+\beta\in D^{m*}$ and otherwise the left hand side is
equal to zero. Hence 
\begin{align*}
\r\left(ST^{m}ST^{n}\right)\vec{e}_{\beta} & =\sqrt{{|D_{m}|}/{|D|}}\si^{2}\,\Scheps{m}\,e\big(nQ(\beta)\big)\sum_{\gamma\in D^{m*}}e\big(-Q_{m}(\gamma)\big)\vec{e}_{\gamma-\beta}.
\end{align*}
For the rest of the section we use the following additional notation
and assumptions.

\begin{assumption}\label{def:main_definition_abcd_mn}Assume that
$m$ and $n$ are integers, with $m$ positive, satisfying the following conditions: 
\begin{align*}
cn-d & >0,\\
\left(cn-d,l\right) & =1,\\
\left(cn-d\right)m & \equiv c\text{ (mod $l$)},\\
cn-d-1\equiv m-c & \equiv0\text{ (mod $8$)}\,\,\text{ if }\,2\nmid c\quad\text{and}\\
an-b\equiv m+ac & \equiv0\text{ (mod $8$)}\,\,\text{ if }\,2|c.
\end{align*}
Furthermore, we set $d'=cn-d,$ $c'=md'-c$, $b'=an-b$, $a'=mb'-a$
and write $c=2^{k}c_{2}$ where $c_{2}$ is odd. 

\end{assumption}The following elementary fact about linear congruence
equations is useful to keep in mind for the proof of the
next lemma: If $r,s,t\in\ZZ$ then the equation 
\[
sx\equiv r\text{ (mod $t$)}
\]
has integer solutions $x\equiv x_{0}$ (mod $\frac{t}{\left(s,t\right)}$)
if and only if $(s,t)|r$.

\begin{lemma}
It is possible to choose integers $m$ and $n$ satisfying the assumption
above. \end{lemma}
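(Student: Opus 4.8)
The plan is to construct $n$ first and then $m$, running the two cases $2\nmid c$ and $2\mid c$ in parallel. Throughout I may assume $c>0$ (the sign of $c$ is reduced to this case via Lemma \ref{lem:rho(minusA)} in the proof of the main theorem), so that $d'=cn-d\to+\infty$ as $n\to+\infty$ and the positivity requirements $d'>0$ and $m>0$ can be met simply by taking representatives large enough inside the relevant residue classes. The two basic tools are the Chinese Remainder Theorem and the elementary fact about linear congruences recalled just above, while the key structural input is $\gcd(c,d)=1$, which follows from $ad-bc=1$.

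First I would fix $n$ modulo $8$. When $c$ is odd the condition $cn-d\equiv 1\pmod 8$ is solvable because $c$ is a unit mod $8$; when $c$ is even we have $a$ and $d$ odd, and the condition $an-b\equiv 0\pmod 8$ is solvable because $a$ is a unit mod $8$. In either case this pins down $n$ mod $8$ and forces $d'=cn-d$ to be odd, so $(d',2)=1$ automatically. Next I would impose $(d',l)=1$ prime by prime: for $p\mid l$ with $p\mid c$ one has $d'\equiv -d\pmod p$, which is nonzero since $\gcd(c,d)=1$; for an odd prime $p\mid l$ with $p\nmid c$ the equation $cn\equiv d\pmod p$ has exactly one solution, leaving at least $p-1\ge 2$ admissible residues for $n\bmod p$. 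As $8$ and these odd primes are pairwise coprime, CRT combines the mod-$8$ condition with the finitely many avoidance conditions, and taking $n$ large then yields $n$ with $d'>0$ and $(d',l)=1$ satisfying the prescribed congruence mod $8$.

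With $d'$ fixed and $(d',l)=1$, the congruence $d'm\equiv c\pmod l$ is solvable and forces $m\equiv (d')^{-1}c\pmod l$, and it remains to meet the mod-$8$ condition on $m$ ($m\equiv c$ if $2\nmid c$, $m\equiv -ac$ if $2\mid c$) and to take $m>0$. The step I expect to be the main obstacle is verifying that these two congruences on $m$ are consistent on $\gcd(8,l)$, so that CRT applies; this is exactly what the mod-$8$ conditions on $n$ were engineered to guarantee. When $c$ is odd we arranged $d'\equiv 1\pmod 8$, whence $(d')^{-1}c\equiv c\pmod 8$. When $c$ is even, the relation $ad-bc=1$ together with $an\equiv b\pmod 8$ gives $a\,d'=acn-ad\equiv cb-ad=-1\pmod 8$, so (using $a^2\equiv 1\pmod 8$) $d'\equiv -a$ and hence $(d')^{-1}c\equiv -ac\pmod 8$. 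In both cases the value of $m$ forced mod $8$ by the mod-$l$ condition agrees with the prescribed one, so CRT produces $m$ modulo $\mathrm{lcm}(8,l)$ satisfying all constraints, and a sufficiently large positive representative completes the construction.
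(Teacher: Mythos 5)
Your proof is correct, and it differs from the paper's own argument in one substantive point. The paper secures $(d',l)=1$ by invoking Dirichlet's theorem on primes in arithmetic progressions: after fixing the residue of $n$ modulo $8$, the progression $cn-d$ contains infinitely many primes not dividing $l$, and any sufficiently large such prime serves as $d'$. You instead impose coprimality prime by prime: for $p\mid l$ with $p\mid c$ it is automatic from $\gcd(c,d)=1$ that $d'\equiv -d\not\equiv 0 \pmod p$; for odd $p\mid l$ with $p\nmid c$ you avoid the single bad residue of $n$ modulo $p$; and the prime $2$ is covered because the mod-$8$ normalization forces $d'$ odd in both cases. Gluing these finitely many local conditions by CRT is more elementary (no appeal to Dirichlet) and effectively constructive, which is in the spirit of a paper aimed at computation; the paper's route is shorter to state but uses a much heavier tool and produces a $d'$ that is prime, which is more than is needed. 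The second half of your argument, producing $m$, is in substance identical to the paper's: the paper writes $m=r+8j$ (with $r$ the prescribed residue mod $8$) and solves the linear congruence $8d'j\equiv c-d'r \pmod l$, whose solvability criterion $(8,l)\mid c-d'r$ is exactly your consistency condition $(d')^{-1}c\equiv r \pmod{\gcd(8,l)}$; both reduce to the same mod-$8$ computations, namely $d'\equiv 1$ for $c$ odd and $d'\equiv -a$ for $c$ even, the latter derived, as you do, from $ad-bc=1$, $a^2\equiv 1 \pmod 8$ and $an\equiv b \pmod 8$.
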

\begin{proof}
\textbf{Case 1, $c$ odd: }Since $\left(c,8\right)=1$ it is clear
that $cn-d\equiv1$ (mod $8$) has solutions, and since $\left(c,d\right)=1$
the arithmetic progression $cn-d$ contains an infinite number of
primes not dividing $l$. It follows that there exists $n$ such that
$cn-d>0,$ $\left(cn-d,l\right)=1$ and $d'=cn-d\equiv1$ (mod $8$). Let
$m\equiv c$ (mod $8$) then we can write $m$ as $m=c+8j$ and it is clear
that $d'm\equiv c$ (mod $l$) is equivalent to $d'\left(c+8j\right)\equiv c$ (mod $l$)
$\Leftrightarrow$ $8d'j\equiv c-d'c$ (mod $l$) and since $\left(8d',l\right)=\left(8,l\right)$
and $c-d'c\equiv0$ (mod $8$) it is clear that there exists $m\equiv c$ (mod $8$)
such that $m>0$ and $d'm\equiv c$ (mod $l$).

\textbf{Case 2, $c$ even:} If $n\equiv ab$ (mod $8$) then $an\equiv a^{2}b\equiv b$ (mod $8$)
and $cn-d\equiv abc-d\equiv a^{2}d-a-d\equiv-a$ (mod $8$). 
The set of integers $cn-d\equiv-a$ (mod $8$) contains an infinite number
of primes which does not divide $l$ since $(a,8c)=1$. Hence we can choose $n$ with
$cn-d>0,$ $an-b\equiv0$ (mod $8$) and $\left(cn-d,l\right)=1$. 

Let $m\equiv-ac$ (mod $8$). Then $m=-ac+8j$ for some $j\in\ZZ$
and $d'm\equiv c$ (mod $l$) $\Leftrightarrow$ $\left(-ac+8j\right)d'\equiv c$ (mod $l$)
$\Leftrightarrow$ $8jd'\equiv c\left(1+ad\right)'$ (mod $l$). Since
$\left(8d',l\right)=\left(8,l\right)$ and $c\left(1+ad'\right)\equiv c\left(1-a^{2}\right)\equiv0$ (mod $8$)
it follows that $(8,l)$ divides $c\left(1+ad'\right)b$. Therefore we can
 find an $m>0$ such that $m\equiv-ac$ (mod $8$) and $md'\equiv c$ (mod $l$). 
\end{proof}
From now on suppose that $m$ and $n$ are chosen as above. Define $\mat{X}\in\Gamma_0(l)$ by 
\[
\mat{X}=\mat{A}\mat{T}^{-n}\mat{S}\mat{T}^{-m}\mat{S}=\left(\begin{array}{cc}
a' & b'\\
c' & d'\end{array}\right)=\left(\begin{array}{cc}
\left(an-b\right)m-a & an-b\\
\left(cn-d\right)m-c & cn-d\end{array}\right).
\]
We first write $\rho_{\fqmQ}(\mat{A})$ in terms of $\rho_{\fqmQ}(\mat{X})$ and $\rho_{\fqmQ}(\mat{ST}^{m}\mat{ST}^{n})$, 
and then we evaluate the resulting expressions and show that they are independent of the choice of $m$ and $n$.
Since $m>0$ it follows from \eqref{eq:co-cycle-for-rho} and Lemma \ref{lem:sigma(xStmSTn)=00003D1} that
 \[
\r(\mat{A})=\sigma\left(\mat{X},\mat{ST}^{m}\mat{ST}^{n}\right)\r(\mat{X})\r(\mat{ST}^{m}\mat{ST}^{n})=\r(\mat{X}\r(\mat{ST}^{m}\mat{ST}^{n}).
\]
By Lemma \ref{lem:action_on_gamma_0} and \ref{lem:rho(STmSTn)} we see that if $\beta\in D$ then
\begin{align*}
\r(\mat{A})\vec{e}_{\beta} & =\r(\mat{X})\rho(\mat{ST}^{m}\mat{ST}^{n})\vec{e}_{\beta}\\
 & =\r(\mat{X})\sqrt{|D_m|/|D|}\si^{2}\,\Scheps{m}\,e\big(nQ(\beta)\big)\sum_{\gamma\in D^{m*}}e\big(-Q_{m}(\gamma)\big)\vec{e}_{\gamma-\beta}\\
 & =\sqrt{|D_m|/|D|}\si^{2}\,\Scheps{m}\,\sum_{\gamma\in D^{m*}}e\big(nQ(\beta)-Q_{m}(\gamma)\big)\rho_{\fqmQ}(\mat{X})\vec{e}_{\gamma-\beta} \\
 & =\Lambda\cdot\sum_{\gamma\in D^{m*}}e\big(nQ(\beta)-Q_{m}(\gamma)+b'd'Q(\gamma-\beta)\big)
\vec{e}_{d'\left(\gamma-\beta\right)},
\end{align*}
where $\Lambda=\sqrt{|D_m|/|D|} \si^{2}\,\Scheps{m}\,\eq(\mat{X})\eqd{d'}^{-1}$.
By the arguments of \cite[pp.~16-17]{scheithauer:weil_rep}  we see that
 \[ 
nQ\left(\beta\right)-Q_{m}\left(\gamma\right)+b'd'Q\left(\gamma-\beta\right)\equiv aQ_{c}\left(\mu\right)+bdQ\left(\beta\right)+bB\left(\beta,\mu\right) \text{ (mod $1$)},
\]
where $\mu=d'\left(\gamma-\beta\right)-d\beta\in D^{c*}$. Furthermore,
as $\gamma$ runs through $D^{m*}=D^{c*}$ so does $\mu$, and we can therefore rewrite the last sum as 
\[
\sum_{\mu\in D^{c*}}e\big(aQ_{c}(\mu)+bdQ(\beta)+bB(\beta,\mu)\big)\vec{e}_{\mu+d\beta}.
\]
It follows that if $\alpha-d\beta\notin D^{c*}$ then  $\r(M)_{\alpha\beta}=0$, 
and otherwise 
\begin{align*}
\r(M)_{\alpha,\beta} & =\xi\lc e\big(aQ_{c}(\alpha-d\beta)+bdQ(\beta)+bB(\beta,(\alpha-d\beta))\big)\\
 & =\xi\lc e\big(aQ_{c}(\alpha-d\beta)-bdQ(\beta)+bB(\beta,\alpha)\big),
\end{align*}
where $\xi$ is the eight root of unity given by
\[
\xi=\si^{2}\,\Scheps{m}\,\eq\left(X\right)\eqd{d'}.
\]
By Lemma \ref{sigma_Qd_eps_Qd} we see that $\eqd{d'}=(\frac{d'}{\left|D\right|})e_{8}\big(-(1-d')\oddity(\fqmQ)\big)$,
and by Lemma \ref{lem:Scheithauer3.9} (using that $q_{m}=(q,m)=(q,c)=q_{c}$)
we know that $\Scheps{m}$ is given by  
\[
\prod_{2|q\nmid c}e_{8}\left(\frac{m}{q_{c}}\oddity\left(\frac{q}{q_{c}}\right)_{*}^{\epsilon_{q}n_{q}}\right)\left(\frac{m/q_{c}}{\left(q/q_{c}\right)^{n_{q}}}\right)
\prod_{2<p|q\nmid c}\gammafak_{p}\big((q/q_{c})^{\epsilon_{q}n_{q}}\big)\left(\frac{m/q_{c}}{\left(q/q_{c}\right)^{n_{q}}}\right).
\]
We are now able to write $\xi=\xi_{0}\prod\xi_{p}$ with
\begin{align*}\label{eq:xi_2_can_be}
\xi_{0} & =\si^{2}\eq(\mat{X}),\\
\xi_{2} & =e_{8}\big((d'-1)\oddity(\fqmQ)\big) \prod_{2|q}\left(\frac{d'}{q^{n_{q}}}\right)
 \prod_{2|q \nmid c}e_{8} \left(\frac{m}{q_{c}}\,\oddity\left(\frac{q}{q_{c}}\right)_{*}^{\epsilon_{q}n_{q}}\right)
 \left(\frac{m/q_{c}}{\left(q/q_{c}\right)^{n_{q}}}\right),\\
\xi_{p} & =\prod_{2<p|q\nmid c}\gammafak_{p}\left(\left(q/q_{c}\right)^{\epsilon_{q}n_{q}}\right)\left(\frac{m/q_{c}}{\left(q/q_{c}\right)^{n_{q}}}\right)\prod_{2<p|q}\left(\frac{d'}{q^{n_{q}}}\right).
\end{align*}
To conclude the proof of the theorem for the case $c>0$, we use
Lemmas \ref{lem:kronecker(c',d')} (for $\xi_{0}$), \ref{lem:xi2}
(for $\xi_{2}$) and \ref{lem:xip} (for $\xi_{p}$) to remove the
dependence on $m$ and $n$.
\begin{lemma}
\label{lem:kronecker(c',d')}If the signature of $\fqmQ$ is
even then $\xi_{0}=\si^{2}$, and if it is odd then 
\[
\xi_{0}=\si^{2}\left(\frac{-a}{c}\right)\times
\begin{cases}
1 & \text{if $c$ is odd,}\\
e_{8}\big((c_{2}+1)(a+1)\big) & \text{if $c$ is even.}
\end{cases}
\]
\end{lemma}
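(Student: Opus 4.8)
The even-signature case is immediate: by Definition \ref{def:eq} we have $\eq(\mat{X})=1$, so $\xi_{0}=\si^{2}\eq(\mat{X})=\si^{2}$. Assume from now on that the signature is odd, so that $\eq(\mat{X})=\left(\frac{c'}{d'}\right)$ and $4\mid l$. My plan is to evaluate $\left(\frac{c'}{d'}\right)$ by flipping it with quadratic reciprocity \eqref{eq:quad-recip}, using the congruences of Assumption \ref{def:main_definition_abcd_mn} to discard the correction terms, and then to re-express the answer through the original entry $a$ by means of $ad\equiv1\pmod c$.

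First note that $d'=cn-d>0$ is odd (if $c$ is even then $d$ is odd because $ad-bc=1$), so the Jacobi symbol $\left(\frac{\cdot}{d'}\right)$ is periodic modulo $d'$ in its upper argument. Since $c'=md'-c\equiv-c\pmod{d'}$, this gives $\left(\frac{c'}{d'}\right)=\left(\frac{-c}{d'}\right)$. Writing $\left(\frac{-c}{d'}\right)=e_{4}(1-d')\left(\frac{c}{d'}\right)$ and applying \eqref{eq:quad-recip} to $\left(\frac{c}{d'}\right)$ — the Hilbert symbols at infinity are trivial because $c,d'>0$ — I obtain $\left(\frac{c'}{d'}\right)=e_{4}(1-d')\left(\frac{d'}{c}\right)e_{8}\big((c_{2}-1)(d'-1)\big)$. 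The factor $\left(\frac{d'}{c}\right)$ is then reduced using $d'=cn-d\equiv-d\pmod c$ together with $ad\equiv1\pmod c$: on the odd part this yields $\left(\frac{d'}{c_{2}}\right)=\left(\frac{-d}{c_{2}}\right)=\left(\frac{-a}{c_{2}}\right)$, while the $2$-adic part contributes $\left(\frac{d'}{2}\right)^{k}$ with $k=\ord_{2}(c)$.

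It remains to match these against the target $\left(\frac{-a}{c}\right)e_{8}\big((c_{2}+1)(a+1)\big)$, where $\left(\frac{-a}{c}\right)=\left(\frac{a}{2}\right)^{k}\left(\frac{-a}{c_{2}}\right)$. For odd $c$ one has $k=0$ and $d'\equiv1\pmod 8$ by Assumption \ref{def:main_definition_abcd_mn}, so $e_{4}(1-d')=e_{8}((c_{2}-1)(d'-1))=1$ and the claim $\left(\frac{c'}{d'}\right)=\left(\frac{-a}{c}\right)$ drops out. For even $c$ the decisive input is that $b'=an-b\equiv0\pmod 8$: combined with $ad=1+bc$ this gives $ad'=a(cn-d)=cb'-1\equiv-1\pmod 8$, whence $\left(\frac{ad'}{2}\right)=1$ and $d'\equiv-a\pmod 8$. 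After cancelling $\left(\frac{-a}{c_{2}}\right)$, the claimed identity reduces to $\big[\left(\frac{d'}{2}\right)\left(\frac{a}{2}\right)\big]^{k}=e_{8}(E)$ for an explicit exponent $E$ linear in $a,d',c_{2}$; the left side equals $\left(\frac{ad'}{2}\right)^{k}=1$, and substituting $d'\equiv-a\pmod 8$ collapses $E$ to $2(a+1)(c_{2}-1)\equiv0\pmod 8$, so $e_{8}(E)=1$ as well.

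The main obstacle is the even-$c$ bookkeeping of the $2$-adic factors: reciprocity produces both a genuine $\left(\frac{\cdot}{2}\right)^{k}$ term and several $e_{8}$ corrections, and a priori these appear to depend on the parity of $k$ and on $d'\bmod 8$ (which is not pinned down by the mod-$l$ data alone when $\ord_{2}l=2$). The point that makes everything uniform is the single congruence $ad'\equiv-1\pmod 8$ coming from $8\mid b'$, which simultaneously trivialises the $\left(\frac{\cdot}{2}\right)^{k}$ factor and forces $d'\equiv-a\pmod 8$; I expect verifying this collapse to be the only place where real care is needed.
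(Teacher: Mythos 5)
Your proposal is correct and takes essentially the same route as the paper's proof: reduce $\eq(\mat{X})=\left(\frac{c'}{d'}\right)$ to $\left(\frac{-c}{d'}\right)$ by periodicity, flip with quadratic reciprocity \eqref{eq:quad-recip}, invoke the mod-$8$ congruences of Assumption \ref{def:main_definition_abcd_mn} (including $d'\equiv -a \pmod 8$ for even $c$, which you correctly derive from $8\mid b'$ via $ad'=cb'-1$), and convert $d$ to $a$ using $ad\equiv 1\pmod c$. The only differences are bookkeeping: you pull out $\left(\frac{-1}{d'}\right)=e_{4}(1-d')$ before applying reciprocity to $\left(\frac{c}{d'}\right)$, whereas the paper applies \eqref{eq:quad-recip} directly to the factors $\left(\frac{-c_{2}}{d'}\right)$ and $\left(\frac{2^{k}}{d'}\right)$.
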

\begin{proof}
The case of even signature is trivial, so assume that the signature
is odd. By Assumption \ref{def:main_definition_abcd_mn} we have $c>0,$ $cn-d>0$, $cn-d$ is odd (since $4|l$ when $\sign\left(\fqmQ\right)$
is odd), $cn-d\equiv1$ (mod $8$) if $c$ is odd and $cn-d\equiv-a$ (mod $8$)
if $c$ is even. By \eqref{eq:quad-recip} and periodicity of the Legendre symbol we see that 
\begin{align*}
\eq(\mat{X})&=\left(\frac{c'}{d'}\right)= \left(\frac{\left(cn-d\right)m-c}{cn-d}\right)=\left(\frac{-c}{cn-d}\right)=\left(\frac{-c_{2}}{cn-d}\right)\left(\frac{2^{k}}{cn-d}\right)\\
 & =e_{8}\big((cn-d-1)(-c_{2}-1)\big)\left(\frac{cn-d}{-c_{2}}\right)\left(\frac{cn-d}{2^{k}}\right)\\
 & =\left(\frac{-d}{c_{2}}\right)\left(\frac{-a}{2^{k}}\right)
\begin{cases}
1 & \text{if $c$ is odd},\\
e_{8}\left((a+1)(c_{2}+1)\right) & \text{if $c$ is even}.
\end{cases}
\end{align*}
We finish the proof by observing that 
$(\frac{d}{p})=(\frac{a}{p})$
for all odd primes $p$ dividing $c$. 
\end{proof}
\begin{lemma}
\label{lem:xi2}The factor $\xi_2$ is given by 
\[
\xi_{2}=\begin{cases}
e_{8}\big(c\,\oddity\left(\fqmQ\right)\big)\prod_{2|q}\left(\frac{c}{q^{n_{q}}}\right) & \text{if $c$ is odd},\\
e_{8}\big(-\left(1+a\right)\oddity\left(\fqmQ\right)\big) \prod_{2|q}\left(\frac{-a}{q_{c}^{n_{q}}}\right) \times\\
\times \prod_{2|q\nmid c}e_{8}\left(\frac{-ac}{q_{c}}\,\oddity\left(\frac{q}{q_{c}}\right)_{*}^{\epsilon_{q}n_{q}}\right)\left(\frac{c/q_{c}}{\left(q/q_{c}\right)^{n_{q}}}\right)
 & \text{if $c$ is even.}
\end{cases}
\]
\end{lemma}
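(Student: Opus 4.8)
The plan is to eliminate the auxiliary integers $m$ and $d'=cn-d$ from the expression for $\xi_{2}$ recorded just above the lemma, reducing everything to the invariants $a$, $c$ and the oddities of the Jordan components, and thereby to exhibit the stated $m,n$-independent formula. Throughout I assume $c>0$, as this is the regime in which $\xi_{2}$ was produced. I would treat the two parities of $c$ separately, since the congruence conditions in Assumption \ref{def:main_definition_abcd_mn} differ in the two cases.

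For $c$ odd I would first note that $q_{c}=(q,c)=1$ for every even $q$, so every even $q$ satisfies $q\nmid c$ and the products collapse to $\prod_{2|q}$ with $m/q_{c}=m$ and $q/q_{c}=q$. Summing the local oddities via $\sum_{2|q}\oddity(q_{*}^{\epsilon_{q}n_{q}})=\oddity(\fqmQ)$ lets me combine the two exponential factors into $e_{8}((d'-1+m)\oddity(\fqmQ))$ and the two families of Kronecker symbols into $\prod_{2|q}(\frac{d'm}{q^{n_{q}}})$. The conditions $d'\equiv1$ and $m\equiv c\pmod 8$ then give $d'-1+m\equiv c$ and $d'm\equiv c\pmod 8$; since a Kronecker symbol with a power-of-two denominator depends only on its numerator modulo $8$, this immediately yields $\xi_{2}=e_{8}(c\,\oddity(\fqmQ))\prod_{2|q}(\frac{c}{q^{n_{q}}})$.

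For $c$ even I would proceed factor by factor, using $d'\equiv-a$ and $m\equiv-ac\pmod 8$. The prefactor becomes $e_{8}(-(1+a)\oddity(\fqmQ))$ because $d'-1\equiv-(1+a)\pmod 8$. In $\prod_{2|q}(\frac{d'}{q^{n_{q}}})$ I replace $d'$ by $-a$ (the denominator is again a power of $2$) and split by multiplicativity as $(\frac{-a}{q_{c}^{n_{q}}})(\frac{-a}{(q/q_{c})^{n_{q}}})$; the first factor is exactly the product in the target, and the cofactor (trivial when $q\mid c$) I fold into the third product. It then remains to verify, for each even $q$ with $q\nmid c$, the local identity
\[
e_{8}\big(\tfrac{m}{q_{c}}\oddity(\tfrac{q}{q_{c}})_{*}^{\epsilon_{q}n_{q}}\big)\big(\tfrac{-a\,m/q_{c}}{(q/q_{c})^{n_{q}}}\big)=e_{8}\big(\tfrac{-ac}{q_{c}}\oddity(\tfrac{q}{q_{c}})_{*}^{\epsilon_{q}n_{q}}\big)\big(\tfrac{c/q_{c}}{(q/q_{c})^{n_{q}}}\big).
\]
Here I would first record that $q_{c}\mid m$ (from $d'm\equiv c\pmod l$, since $q_{c}\mid(c,l)$ and $d'$ is a unit mod $l$), so $m/q_{c}$ is a genuine integer and $c/q_{c}=c_{2}$ is odd; then I would use the relation $d'm\equiv c\pmod l$ at the prime $2$, together with $a^{2}\equiv1\pmod8$, to pin down $m/q_{c}$ modulo a suitable power of $2$ and rewrite $-a\,m/q_{c}$ as $c_{2}$ in the Kronecker symbol.

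The main obstacle is precisely this even-$c$ local identity: I must carry enough $2$-adic precision on $m$ to control \emph{simultaneously} the oddity exponential $e_{8}(\tfrac{m}{q_{c}}\oddity(\cdots))$ and the Kronecker symbol $(\frac{m/q_{c}}{(q/q_{c})^{n_{q}}})$, both of which can genuinely depend on $m/q_{c}$ modulo $8$, using only the two available sources of information: the mod-$8$ condition $m+ac\equiv0$ and the relation $d'm\equiv c\pmod l$, which, since $2^{\ord_{2}q}\mid l$, determines $m/q_{c}$ only modulo $2^{\ord_{2}(q/q_{c})}$. The reconciliation is delicate but structural: for \emph{even-type} components the rank $n_{q}$ is even, so the Kronecker symbol is automatically trivial and only the exponential matters, whose oddity is divisible by $4$ and hence needs $m/q_{c}$ only modulo $2$; for \emph{odd-type} components the level carries the extra factor $2q$, supplying one more power of $2$ of precision, while the subscript $t$ matches the rank parity. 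Feeding these facts into \eqref{eq:quad-recip} and $a^{2}\equiv1\pmod8$ is the heart of the argument, and once it is carried out the claimed $m,n$-free expression for $\xi_{2}$ drops out.
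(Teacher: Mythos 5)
Your proposal is correct and takes essentially the same approach as the paper: for odd $c$ you combine the exponentials and Kronecker symbols and use $d'\equiv1$, $m\equiv c\pmod 8$, and for even $c$ you use $d'\equiv-a$, $m\equiv-ac\pmod 8$ together with the splitting $\left(\frac{-a}{q^{n_q}}\right)=\left(\frac{-a}{q_c^{n_q}}\right)\left(\frac{-a}{(q/q_c)^{n_q}}\right)$, which is exactly the computation in the paper's proof. Your $2$-adic precision analysis for the even case (even-type components: even rank $n_q$ and oddity divisible by $4$, so only parity of $m/q_c$ matters; odd-type components: the extra factor $2q\mid l$ plus $t_q\equiv n_q\pmod 2$) is sound and in fact supplies the justification that the paper compresses into the single phrase ``collecting the Kronecker symbols with $-a$''; the paper's own Lemma \ref{lem:xip} points to an equivalent shortcut, namely the congruence $m\equiv a'c\pmod l$ with $a'\equiv -a\pmod 8$, which settles the same replacement of $m/q_c$ in both the oddity exponential and the Kronecker symbol.
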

\begin{proof}
It is clear that $\xi_{2}=1$ if $\left|D\right|$ is odd; and we therefore assume, without loss of generality, that $\left|D\right|$ is
even. We have seen that 
\[
\xi_{2} = e_{8}\big((d'-1)\oddity(\fqmQ)\big)\prod_{2|q}\left(\frac{d'}{q^{n_{q}}}\right)
\prod_{2|q\nmid c}e_{8}\left(\frac{m}{q_{c}}\,\oddity\left(\frac{q}{q_{c}}\right)_{*}^{\epsilon_{q}n_{q}}\right)\left(\frac{m/q_{c}}{\left(q/q_{c}\right)^{n_{q}}}\right),
\]
and to remove the dependence of this expression on $m$ and $n$ we must first distinguish between the case of $c$ being odd and even.
If $c$ is odd then $q_c=1$  and $m$ is odd. Hence
\[
\xi_{2} = e_{8}\big(\left(m-1+d'\right)\oddity\left(\fqmQ\right)\big)\prod_{2|q}\left(\frac{d'm}{q^{n_{q}}}\right).
\]
From Assumption \ref{def:main_definition_abcd_mn} we know that $d'\equiv1$ (mod $8$)
and $m-1+d'\equiv c$ (mod $8$). It follows that $md'\equiv m\equiv c$ (mod $8$) and
\[
\xi_{2}=e_{8}\big(c\,\oddity(\fqmQ)\big)\prod_{2|q}\left(\frac{c}{q^{n_{q}}}\right).
\]
If $c$ is even then $a$ is odd, $b'\equiv an-b\equiv0$ (mod $8$)
and $m\equiv-ac$ (mod $8$). It follows that $n\equiv ab$ (mod $8$) and since
$a'd'-b'c'=1$ it is clear that $a'd'\equiv1$ (mod $8$), which implies
that $d'\equiv a'=mb'-a\equiv-a$ (mod $8$). Thus $1-d'\equiv-\left(1+a\right)$ (mod $8$).
Finally, by collecting the Kronecker symbols with $-a$ in the denominator, observing that $(q,c)=q$ if $q|c$, we arrive at the desired formula.
\end{proof}
\begin{lemma}
\label{lem:xip}If $p>2$ is a prime dividing $\left|D\right|$ then
the factor $\xi_p$ is given by 
\[
\xi_{p}=\prod_{2<p|q}\left(\frac{-a}{q_{c}^{n_{q}}}\right)\prod_{2<p|q\nmid c}\gammafak_{p}\left(\left(q/q_{c}\right)^{\epsilon_{q}n_{q}}\right)\left(\frac{c/q_{c}}{\left(q/q_{c}\right)^{n_{q}}}\right).
\]
\end{lemma}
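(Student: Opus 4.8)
The plan is to begin from the expression for $\xi_p$ that has already been extracted in the main argument, namely
\[
\xi_{p}=\prod_{2<p|q\nmid c}\gammafak_{p}\left(\left(q/q_{c}\right)^{\epsilon_{q}n_{q}}\right)\left(\frac{m/q_{c}}{\left(q/q_{c}\right)^{n_{q}}}\right)\prod_{2<p|q}\left(\frac{d'}{q^{n_{q}}}\right),
\]
and to eliminate every appearance of the auxiliary integers $m,n$ (equivalently of $d'=cn-d$). Since the factors $\gammafak_{p}\left(\left(q/q_{c}\right)^{\epsilon_{q}n_{q}}\right)$ already coincide with those in the target formula, the whole task reduces to proving, component by component over each odd-$p$ Jordan block of order $q=p^{k}$, the local Kronecker identity
\[
\left(\frac{m/q_{c}}{\left(q/q_{c}\right)^{n_{q}}}\right)\left(\frac{d'}{q^{n_{q}}}\right)=\left(\frac{-a}{q_{c}^{n_{q}}}\right)\left(\frac{c/q_{c}}{\left(q/q_{c}\right)^{n_{q}}}\right),
\]
with the convention that the first symbol on each side is read as $1$ when $q\mid c$ (so that $q/q_{c}=1$).

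First I would set up the $p$-adic bookkeeping. Because $(d',l)=1$ and $p\mid l$, the congruence $md'\equiv c$ (mod $l$) of Assumption \ref{def:main_definition_abcd_mn} forces $\ord_{p}(m)=\ord_{p}(c)$, so that $(q,m)=(q,c)=q_{c}$ and both $m/q_{c}$ and $c/q_{c}$ are $p$-adic units; this is exactly what makes the symbols $\left(\frac{\cdot}{\left(q/q_{c}\right)^{n_{q}}}\right)$ well behaved. Next, using multiplicativity of the Kronecker symbol in its lower argument to split $\left(\frac{d'}{q^{n_{q}}}\right)=\left(\frac{d'}{\left(q/q_{c}\right)^{n_{q}}}\right)\left(\frac{d'}{q_{c}^{n_{q}}}\right)$, together with $\left(\frac{x^{-1}}{\cdot}\right)=\left(\frac{x}{\cdot}\right)$ for units, I would combine
\[
\left(\frac{m/q_{c}}{\left(q/q_{c}\right)^{n_{q}}}\right)\left(\frac{d'}{\left(q/q_{c}\right)^{n_{q}}}\right)=\left(\frac{(m/q_{c})d'}{\left(q/q_{c}\right)^{n_{q}}}\right)=\left(\frac{c/q_{c}}{\left(q/q_{c}\right)^{n_{q}}}\right),
\]
where the last step is the reduction of $md'\equiv c$ (mod $q$) to $(m/q_{c})d'\equiv c/q_{c}$ (mod $q/q_{c}$). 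This already reproduces the second factor of the target and leaves only the residual symbol $\left(\frac{d'}{q_{c}^{n_{q}}}\right)$ to be identified with $\left(\frac{-a}{q_{c}^{n_{q}}}\right)$.

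For the residual factor I would split into cases on $\ord_{p}(c)$. If $p\nmid c$ then $q_{c}=1$ and both symbols are trivially $1$. If $p\mid c$ (covering both $q\mid c$ and the case $q\nmid c$ with $\ord_{p}(c)\ge1$), then $d'=cn-d\equiv-d$ (mod $p$), while $\det\mat{A}=1$ gives $ad\equiv1$ (mod $p$); hence $d\equiv a^{-1}$ and $\left(\frac{d'}{p}\right)=\left(\frac{-a^{-1}}{p}\right)=\left(\frac{-a}{p}\right)$, so $\left(\frac{d'}{q_{c}^{n_{q}}}\right)=\left(\frac{-a}{q_{c}^{n_{q}}}\right)$ (the symbol depending on its upper entry only modulo $p$). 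Reattaching the $\gammafak_{p}$ factors and taking the product over all odd-$p$ components of $\fqmQ$ then yields the stated formula, manifestly free of $m$ and $n$.

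The computation is elementary, and I expect no deep obstacle; the only care required is the $p$-adic casework distinguishing $q\mid c$, $p\mid c$ with $q\nmid c$, and $p\nmid c$, and making sure that each Kronecker manipulation (multiplicativity in the lower entry, invariance under inversion of the upper entry, and dependence of $\left(\frac{\cdot}{q^{n_{q}}}\right)$ on its upper entry only modulo $p$) is applied to genuine $p$-adic units. The only non-formal inputs are the two congruences $md'\equiv c$ (mod $l$) and $ad\equiv1$ (mod $c$), which is precisely where Assumption \ref{def:main_definition_abcd_mn} and $\det\mat{A}=1$ are used.
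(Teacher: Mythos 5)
Your proof is correct and is essentially the paper's own argument: both start from the same extracted expression for $\xi_p$ and eliminate $m$ and $n$ through the congruences $md'\equiv c \pmod{l}$, $d'\equiv -d \pmod{p}$ and $ad\equiv 1 \pmod{p}$ when $p\mid c$, the only difference being bookkeeping (the paper splits globally into the cases $p\nmid c$ and $p\mid c$ and routes the computation through $a'\equiv -a \pmod{p}$, whereas you work one Jordan component at a time and split the symbol $\left(\frac{d'}{q^{n_q}}\right)$ into its $q/q_c$ and $q_c$ parts). One harmless overstatement: $md'\equiv c \pmod{l}$ forces $\ord_p(m)=\ord_p(c)$ only when $\ord_p(c)<\ord_p(l)$, and in general one gets just $(q,m)=(q,c)=q_c$ for $q\mid l$; but in the exceptional case one necessarily has $q\mid c$, where your stated convention makes the relevant symbols trivial, so nothing downstream is affected.
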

\begin{proof}
We know that 
\[
\xi_{p}=\prod_{p|q\nmid c}\gammafak_{p}\left(\left(q/q_{c}\right)^{\epsilon_{q}n_{q}}\right)\left(\frac{m/q_{c}}{\left(q/q_{c}\right)^{n_{q}}}\right)\prod_{p|q}\left(\frac{d'}{q^{n_{q}}}\right),
\]
and since $d'm\equiv c$ (mod $l$) it follows that $m\equiv a'd'm\equiv a'c$ (mod $l$)
and consequently $\frac{m}{q_{c}}\equiv a'\frac{c}{q_{c}}$ (mod $\frac{l}{q_{c}}$)
for all $q$. Furthermore, if $q\nmid c$ then $\frac{m}{q_{c}}\equiv a'\frac{c}{q_{c}}$ (mod $p$)
for all $p|q$. There are two cases to consider. If $p\nmid c$ then
\[
\xi_{p}=\prod_{2<p|q}\gammafak_{p}\left(q^{\epsilon_{q}n_{q}}\right)\left(\frac{md'}{q^{n_{q}}}\right)=\prod_{2<p|q}\gammafak_{p}\left(q^{\epsilon_{q}n_{q}}\right)\left(\frac{c}{q^{n_{q}}}\right).
\]
If $p|c$ then $d'=cn-d\equiv-d$ (mod $p$), $p\nmid d$ and $p|m$. Hence
$a'=b'm-a\equiv-a$ (mod $p$), $m\equiv-ac$ (mod $p$) and  
\begin{align*}
\xi_{p}&=\prod_{2<p|q\nmid c}\gammafak_{p}\left(\left(q/q_{c}\right)^{\epsilon_{q}n_{q}}\right)\left(\frac{-ac/q_{c}}{\left(q/q_{c}\right)^{n_{q}}}\right)\prod_{2<p|q}\left(\frac{-d}{q^{n_{q}}}\right) \\
       &=\prod_{2<p|q}\left(\frac{-a}{q_{c}^{n_{q}}}\right)\prod_{2<p|q\nmid c}\gammafak_{p}\left(\left(q/q_{c}\right)^{\epsilon_{q}n_{q}}\right)\left(\frac{c/q_{c}}{\left(q/q_{c}\right)^{n_{q}}}\right).
\end{align*}
Since $q_{c}=1$ if $p\nmid c$ it follows that this formula
actually contains both cases. 
\end{proof}
We have now proved the theorem for $c>0$. For the case of $c<0$
we observe that $D^{-c*}=D^{c*}$ and $Q_{-c}\left(\alpha\right)=-Q_{c}\left(\alpha\right)$,
and apply the theorem to the matrix $-\mat{A}$. Then 
\begin{align*}
\r(-\mat{A})\vec{e}_{-\beta} & =\xi(-a,-c)\lc\sum_{\alpha\in D^{c*}}e(-aQ_{-c}(\alpha)+bdQ(-\beta)-bB(\alpha,-\beta))\vec{e}_{\alpha+d\beta}\\
 & =\xi(-a,-c)\lc\sum_{\alpha\in D^{c*}}e(aQ_{c}\left(\alpha\right)+bdQ\left(\beta\right)+bB\left(\alpha,\beta\right))\vec{e}_{\alpha+d\beta},
\end{align*}
where $l_c=\sqrt{|D_c|/|D|}$. By Lemma \ref{lem:rho(minusA)} we find that 
\begin{align*}
\r(\mat{A})\vec{e}_{\beta} & =-\si^{2}\r(-\mat{A})\vec{e}_{-\beta}\\
 & =\si^{-2}\xi(-a,-c)\lc\sum_{\alpha\in D^{c*}}e\big(aQ_{c}(\alpha)+bdQ(\beta)+bB(\alpha,\beta)\big)\vec{e}_{\alpha+d\beta},
\end{align*}
and hence $\xi(a,c)=e_{4}\big(\sign(\fqmQ)\big)\xi(-a,-c)$ if $c<0$. 
This concludes the proof of Theorem \ref{thm:main_thm}. 
\begin{acknowledgements}

I would like to thank Nils Scheithauer for clarifying details of \cite{scheithauer:weil_rep},
Nils-Peter Skoruppa for sharing thoughts about Weil representations
in general, and the manuscript \cite{skoruppa-weilrep},
in particular. I would also like to thank Stephan Ehlen for his assistance with
relating the cocycles $\sigma$ and $\mu$ in Section
\ref{sec:The-Metaplectic-Group}. 
\end{acknowledgements}

\begin{thebibliography}{10}

\bibitem{fqm}
{\em A package for computing with finite quadratic modules in sage}, 2011.
\newblock {\tt http://hg.countnumber.de/}.

\bibitem{MR1625181}
B.~C. Berndt, R.~J. Evans, and K.~S. Williams.
\newblock {\em Gauss and {J}acobi sums}.
\newblock Canadian Mathematical Society Series of Monographs and Advanced
  Texts. John Wiley \& Sons Inc., New York, 1998.
\newblock A Wiley-Interscience Publication.

\bibitem{MR1634977}
R.~Berndt and R.~Schmidt.
\newblock {\em Elements of the representation theory of the {J}acobi group},
  volume 163 of {\em Progress in Mathematics}.
\newblock Birkh\"auser Verlag, Basel, 1998.

\bibitem{Bo}
R.~E. Borcherds.
\newblock Automorphic forms with singularities on {G}rassmannians.
\newblock {\em Invent. Math.}, 132(3):491--562, 1998.

\bibitem{MR1773561}
R.~E. Borcherds.
\newblock Reflection groups of {L}orentzian lattices.
\newblock {\em Duke Math. J.}, 104(2):319--366, 2000.

\bibitem{compmaass}
J.~Bruinier and F.~Str{\"o}mberg.
\newblock Computation of harmonic weak maassforms.
\newblock preprint, 2010.

\bibitem{MR1903920}
J.~H. Bruinier.
\newblock {\em Borcherds products on {O}(2, {$l$}) and {C}hern classes of
  {H}eegner divisors}, volume 1780 of {\em Lecture Notes in Mathematics}.
\newblock Springer-Verlag, Berlin, 2002.

\bibitem{Br2}
J.~H. Bruinier.
\newblock Regularized theta lifts for orthogonal groups over totally real fields.
\newblock arXiv:0908.3076, 2009.

\bibitem{cohn:advanced_nt}
H.~Cohn.
\newblock {\em Advanced Number Theory}.
\newblock Dover Publications Inc., New York, 1980.

\bibitem{MR1662447}
J.~H. Conway and N.~J.~A. Sloane.
\newblock {\em Sphere packings, lattices and groups}, volume 290 of {\em
  Grundlehren der Mathematischen Wissenschaften}.
\newblock Springer-Verlag, New York, third edition, 1999.

\bibitem{MR1280458}
W.~Ebeling.
\newblock {\em Lattices and codes}.
\newblock Advanced Lectures in Mathematics. Friedr. Vieweg \& Sohn,
  Braunschweig, 1994.
\newblock A course partially based on lectures by F. Hirzebruch.

\bibitem{MR0209258}
M.~Eichler.
\newblock {\em Introduction to the theory of algebraic numbers and functions}.
\newblock Translated from the German by George Striker. Pure and Applied
  Mathematics, Vol. 23. Academic Press, New York, 1966.

\bibitem{eichler-zagier}
M.~Eichler and D.~Zagier.
\newblock {\em The theory of {J}acobi forms}, volume~55 of {\em Progress in
  Mathematics}.
\newblock Birkh\"auser Boston Inc., Boston, MA, 1985.

\bibitem{MR2513384}
E.~Freitag and R.~Busam.
\newblock {\em Complex analysis}.
\newblock Universitext. Springer-Verlag, Berlin, second edition, 2009.

\bibitem{MR0225627}
P.-H. Fuss.
\newblock {\em Correspondance math\'ematique et physique de quelques
  c\'el\`ebres g\'eom\`etres du {XVIII}\`eme si\`ecle. {T}omes {I}, {II}}.
\newblock Pr\'ec\'ed\'ee d'une notice sur les travaux de L\'eonard Euler, tant
  imprim\'es qu'in\'edits et publi\'ee sous les auspices de l'Acad\'emie
  Imp\'eriale des Sciences de Saint-P\'etersbourg. The Sources of Science, No.
  35. Johnson Reprint Corp., New York, 1968.

\bibitem{MR0424695}
S.~Gelbart.
\newblock {\em Weil's representation and the spectrum of the metaplectic
  group}.
\newblock Lecture Notes in Mathematics, Vol. 530. Springer-Verlag, Berlin,
  1976.

\bibitem{MR982481}
V.~A. Gritsenko.
\newblock Fourier-{J}acobi functions in {$n$} variables.
\newblock {\em Zap. Nauchn. Sem. Leningrad. Otdel. Mat. Inst. Steklov. (LOMI)},
  168(Anal. Teor. Chisel i Teor. Funktsii. 9):32--44, 187--188, 1988.

\bibitem{MR909238}
B.~Gross, W.~Kohnen, and D.~Zagier.
\newblock Heegner points and derivatives of {$L$}-series. {II}.
\newblock {\em Math. Ann.}, 278(1-4):497--562, 1987.

\bibitem{MR1512360}
E.~Hecke.
\newblock Zur {T}heorie der elliptischen {M}odulfunktionen.
\newblock {\em Math. Ann.}, 97(1):210--242, 1927.

\bibitem{hejhal:lnm1001}
D.~A. Hejhal.
\newblock {\em The {S}elberg trace formula for {${\rm PSL}(2,\,{\bf R})$}.
  {V}ol. 2}, volume 1001 of {\em Lecture Notes in Mathematics}.
\newblock Springer-Verlag, Berlin, 1983.

\bibitem{hermite:1858}
C.~Hermite.
\newblock Sur quelques formules relatives à la transformation des fonctions
  elliptiques.
\newblock {\em J. Math. Pures Appl.}, 3:26--36, 1858.

\bibitem{Iwaniec:topics}
H.~Iwaniec.
\newblock {\em Topics in classical automorphic forms}, volume~17 of {\em
  Graduate Studies in Mathematics}.
\newblock American Mathematical Society, Providence, RI, 1997.

\bibitem{MR0260557}
C.~G.~J. Jacobi.
\newblock {\em Gesammelte {W}erke. {B}\"ande {I}--{VIII}}.
\newblock Herausgegeben auf Veranlassung der K\"oniglich Preussischen Akademie
  der Wissenschaften. Zweite Ausgabe. Chelsea Publishing Co., New York, 1969.

\bibitem{kloosterman}
H.~D. Kloosterman.
\newblock The behaviour of general theta functions under the modular group and
  the characters of binary modular congruence groups. {I}, {II}.
\newblock {\em Ann. of Math. (2)}, 47:317--375, 376--447, 1946.

\bibitem{MR1711085}
M.~Koecher and A.~Krieg.
\newblock {\em Elliptische {F}unktionen und {M}odulformen}.
\newblock Springer-Verlag, Berlin, 1998.

\bibitem{0212.42901}
A.~Krazer.
\newblock {\em {Lehrbuch der Thetafunktionen.}}
\newblock {New York: Chelsea Publishing Company XXIV, 509 S. }, 1970.

\bibitem{MR0204422}
T.~Kubota.
\newblock Topological covering of {${\rm SL}(2)$} over a local field.
\newblock {\em J. Math. Soc. Japan}, 19:114--121, 1967.

\bibitem{maass:modular_functions}
H.~Maass.
\newblock {\em Lectures on {M}odular {F}unctions of {O}ne {C}omplex
  {V}ariable}, volume~29 of {\em Tata Institute of Fundamental Research
  Lectures on Mathematics and Physics}.
\newblock Tata Institute of Fundamental Research, Bombay, second edition, 1983.

\bibitem{MR0098328}
G.~W. Mackey.
\newblock Unitary representations of group extensions. {I}.
\newblock {\em Acta Math.}, 99:265--311, 1958.

\bibitem{MR0506372}
J.~Milnor and D.~Husemoller.
\newblock {\em Symmetric bilinear forms}.
\newblock Springer-Verlag, New York, 1973.
\newblock Ergebnisse der Mathematik und ihrer Grenzgebiete, Band 73.

\bibitem{MR525944}
V.~V. Nikulin.
\newblock Integer symmetric bilinear forms and some of their geometric
  applications.
\newblock {\em Izv. Akad. Nauk SSSR Ser. Mat.}, 43(1):111--177, 238, 1979.

\bibitem{niwa:half_integral}
S.~Niwa.
\newblock Modular forms of half integral weight and the integral of certain
  theta-functions.
\newblock {\em Nagoya Math. J.}, 56:147--161, 1975.

\bibitem{MR0429742}
A.~Nobs and J.~Wolfart.
\newblock Darstellungen von {${\rm SL}(2,$} {${\bf Z}/p^{\lambda }{\bf Z})$}
  und {T}hetafunktionen. {I}.
\newblock {\em Math. Z.}, 138:239--254, 1974.

\bibitem{MR0347768}
O.~T. O'Meara.
\newblock {\em Introduction to quadratic forms}.
\newblock Springer-Verlag, New York, 1971.
\newblock Second printing, corrected, Die Grundlehren der mathematischen
  Wissenschaften, Band 117.

\bibitem{MR1513171}
H.~Petersson.
\newblock Zur analytischen {T}heorie der {G}renzkreisgruppen {I-IV}.
\newblock {\em Math. Ann.}, 115(1):23--67,175--204,518--572,670--709, 1938.

\bibitem{MR0059945}
W.~Pfetzer.
\newblock Die {W}irkung der {M}odulsubstitutionen auf mehrafache {T}hetareihen
  zu quadratischen {F}ormen ungerader {V}ariablenzahl.
\newblock {\em Arch. Math. (Basel)}, 4:448--454, 1953.

\bibitem{poisson_summation}
S.D. Poisson.
\newblock Suite du m{\'e}moire sur les int{\'e}grales d{\'e}finies et sur la
  sommation des s{\'e}ries.
\newblock In {\em Journal de l\'{E}cole {P}olytechnique}, volume~12, pages
  405--509. 1823.

\bibitem{rankin:mod}
R.~A. Rankin.
\newblock {\em Modular {F}orms and {F}unctions}.
\newblock Cambridge University Press, 1976.

\bibitem{lfg}
N.~Ryan, N.~Skoruppa, and F.~Str{\"o}mberg.
\newblock {N}umerical {C}omputation of a {C}ertain {D}irichlet {S}eries
  {A}ttached to {S}iegel {M}odular {F}orms of {D}egree {T}wo.
\newblock http://de.arxiv.org/abs/1009.3198, 2010.

\bibitem{MR2221135}
N.~R. Scheithauer.
\newblock On the classification of automorphic products and generalized
  {K}ac-{M}oody algebras.
\newblock {\em Invent. Math.}, 164(3):641--678, 2006.

\bibitem{scheithauer:weil_rep}
N.~R. Scheithauer.
\newblock The {W}eil representation of {${\rm SL}_2(\mathbb Z)$} and some
  applications.
\newblock {\em Int. Math. Res. Not. IMRN}, (8):1488--1545, 2009.

\bibitem{MR1513241}
B.~Schoeneberg.
\newblock Das {V}erhalten von mehrfachen {T}hetareihen bei
  {M}odulsubstitutionen.
\newblock {\em Math. Ann.}, 116(1):511--523, 1939.

\bibitem{shimura:73:half_integral}
G.~Shimura.
\newblock On modular forms of half integral weight.
\newblock {\em Ann. of Math.}, 97:440--481, 1973.

\bibitem{shintani:75:onconstruction}
T.~Shintani.
\newblock On construction of holomorphic cusp forms of half integral weight.
\newblock {\em Nagoya Math. J.}, 58:83--126, 1975.

\bibitem{skoruppa-weilrep}
N.-P. Skoruppa.
\newblock Weil representations associated to finite quadratic modules and
  vector valued modular forms.
\newblock In preparation.

\bibitem{MR806354}
N.-P. Skoruppa.
\newblock {\em \"{U}ber den {Z}usammenhang zwischen {J}acobiformen und
  {M}odulformen halbganzen {G}ewichts}.
\newblock Bonner Mathematische Schriften [Bonn Mathematical Publications], 159.
  Universit\"at Bonn Mathematisches Institut, Bonn, 1985.
\newblock Dissertation, Rheinische Friedrich-Wilhelms-Universit{\"a}t, Bonn,
  1984.

\bibitem{MR2512363}
N.-P. Skoruppa.
\newblock Jacobi forms of critical weight and {W}eil representations.
\newblock In {\em Modular forms on {S}chiermonnikoog}, pages 239--266.
  Cambridge Univ. Press, Cambridge, 2008.

\bibitem{sage}
W.\thinspace{}A. Stein et~al.
\newblock {\em {S}age {M}athematics {S}oftware ({V}ersion 4.7)}.
\newblock The Sage~Development Team, 2011.
\newblock {\tt http://www.sagemath.org}.

\bibitem{MR0165033}
A.~Weil.
\newblock Sur certains groupes d'op\'erateurs unitaires.
\newblock {\em Acta Math.}, 111:143--211, 1964.

\bibitem{MR0429743}
J.~Wolfart.
\newblock Darstellungen von {${\rm SL}(2,$} {${\bf Z}/p^{\lambda }{\bf Z})$}
  und {T}hetafunktionen. {II}.
\newblock {\em Manuscripta Math.}, 17(4):339--362, 1975.

\end{thebibliography}
\def\cprime{$'$}

\end{document}